\newtheorem{theorem}{Theorem}
\theoremstyle{plain}
\newtheorem{axiom}{Axiom}
\newtheorem{conjecture}{Conjecture}
\newtheorem{corollary}{Corollary}
\newtheorem{definition}{Definition}
\newtheorem{example}{Example}
\newtheorem{exercise}{Exercise}
\newtheorem{lemma}{Lemma}
\newtheorem{proposition}{Proposition}
\newtheorem{remark}{Remark}
\numberwithin{equation}{section}
\numberwithin{equation}{section}
\let\pdfoutput=\undefined\fi
\chardef\@x10\chardef\@xv60
\def\tcitime{
\def\@time{%
  \@minute\time\@hour\@minute\divide\@hour\@xv
  \ifnum\@hour<\@x 0\fi\the\@hour:%
  \multiply\@hour\@xv\advance\@minute-\@hour
  \ifnum\@minute<\@x 0\fi\the\@minute
  }}%
\def\x@hyperref#1#2#3{%
   \catcode`\~ = 12
   \catcode`\$ = 12
   \catcode`\_ = 12
   \catcode`\# = 12
   \catcode`\& = 12
   \catcode`\% = 12
   \y@hyperref{#1}{#2}{#3}%
}
\def\y@hyperref#1#2#3#4{%
   #2\ref{#4}#3
   \catcode`\~ = 13
   \catcode`\$ = 3
   \catcode`\_ = 8
   \catcode`\# = 6
   \catcode`\& = 4
   \catcode`\% = 14
}
\def\QCTOpt[#1]#2{%
  \def\QCTOptB{#1}
  \def\QCTOptA{#2}
}
\def\QCTNOpt#1{%
  \def\QCTOptA{#1}
  \let\QCTOptB\empty
}
\def\Qct{%
  \@ifnextchar[{%
    \QCTOpt}{\QCTNOpt}
}
\def\QCBOpt[#1]#2{%
  \def\QCBOptB{#1}%
  \def\QCBOptA{#2}%
}
\def\QCBNOpt#1{%
  \def\QCBOptA{#1}%
  \let\QCBOptB\empty
}
\def\Qcb{%
  \@ifnextchar[{%
    \QCBOpt}{\QCBNOpt}%
}
\def\PrepCapArgs{%
  \ifx\QCBOptA\empty
    \ifx\QCTOptA\empty
      {}%
    \else
      \ifx\QCTOptB\empty
        {\QCTOptA}%
      \else
        [\QCTOptB]{\QCTOptA}%
      \fi
    \fi
  \else
    \ifx\QCBOptA\empty
      {}%
    \else
      \ifx\QCBOptB\empty
        {\QCBOptA}%
      \else
        [\QCBOptB]{\QCBOptA}%
      \fi
    \fi
  \fi
}
\def\GRAPHICSPS#1{%
 \ifcase\GRAPHICSTYPE
   \special{ps: #1}%
 \or
   \special{language "PS", include "#1"}%
 \fi
}%
\def\graffile#1#2#3#4{%
    \bgroup
	   \@inlabelfalse
       \leavevmode
       \@ifundefined{bbl@deactivate}{\def~{\string~}}{\activesoff}%
        \raise -#4 \BOXTHEFRAME{%
           \hbox to #2{\raise #3\hbox to #2{\null #1\hfil}}}%
    \egroup
}%
\def\draftbox#1#2#3#4{%
 \leavevmode\raise -#4 \hbox{%
  \frame{\rlap{\protect\tiny #1}\hbox to #2%
   {\vrule height#3 width\z@ depth\z@\hfil}%
  }%
 }%
}%
\let\nographics=\@msidraft
\newif\ifwasdraft
\def\GRAPHIC#1#2#3#4#5{%
   \ifnum\@msidraft=\@ne\draftbox{#2}{#3}{#4}{#5}%
   \else\graffile{#1}{#3}{#4}{#5}%
   \fi
}
\def\addtoLaTeXparams#1{%
    \edef\LaTeXparams{\LaTeXparams #1}}%
\newif\ifBoxFrame \BoxFramefalse
\newif\ifOverFrame \OverFramefalse
\newif\ifUnderFrame \UnderFramefalse
\def\BOXTHEFRAME#1{%
   \hbox{%
      \ifBoxFrame
         \frame{#1}%
      \else
         {#1}%
      \fi
   }%
}
\def\doFRAMEparams#1{\BoxFramefalse\OverFramefalse\UnderFramefalse\readFRAMEparams#1\end}%
\def\readFRAMEparams#1{%
 \ifx#1\end%
  \let\next=\relax
  \else
  \ifx#1i\dispkind=\z@\fi
  \ifx#1d\dispkind=\@ne\fi
  \ifx#1f\dispkind=\tw@\fi
  \ifx#1t\addtoLaTeXparams{t}\fi
  \ifx#1b\addtoLaTeXparams{b}\fi
  \ifx#1p\addtoLaTeXparams{p}\fi
  \ifx#1h\addtoLaTeXparams{h}\fi
  \ifx#1X\BoxFrametrue\fi
  \ifx#1O\OverFrametrue\fi
  \ifx#1U\UnderFrametrue\fi
  \ifx#1w
    \ifnum\@msidraft=1\wasdrafttrue\else\wasdraftfalse\fi
    \@msidraft=\@ne
  \fi
  \let\next=\readFRAMEparams
  \fi
 \next
 }%
\def\IFRAME#1#2#3#4#5#6{%
      \bgroup
      \let\QCTOptA\empty
      \let\QCTOptB\empty
      \let\QCBOptA\empty
      \let\QCBOptB\empty
      #6%
      \parindent=0pt
      \leftskip=0pt
      \rightskip=0pt
      \setbox0=\hbox{\QCBOptA}%
      \@tempdima=#1\relax
      \ifOverFrame
          \typeout{This is not implemented yet}%
          \show\HELP
      \else
         \ifdim\wd0>\@tempdima
            \advance\@tempdima by \@tempdima
            \ifdim\wd0 >\@tempdima
               \setbox1 =\vbox{%
                  \unskip\hbox to \@tempdima{\hfill\GRAPHIC{#5}{#4}{#1}{#2}{#3}\hfill}%
                  \unskip\hbox to \@tempdima{\parbox[b]{\@tempdima}{\QCBOptA}}%
               }%
               \wd1=\@tempdima
            \else
               \textwidth=\wd0
               \setbox1 =\vbox{%
                 \noindent\hbox to \wd0{\hfill\GRAPHIC{#5}{#4}{#1}{#2}{#3}\hfill}\\%
                 \noindent\hbox{\QCBOptA}%
               }%
               \wd1=\wd0
            \fi
         \else
            \ifdim\wd0>0pt
              \hsize=\@tempdima
              \setbox1=\vbox{%
                \unskip\GRAPHIC{#5}{#4}{#1}{#2}{0pt}%
                \break
                \unskip\hbox to \@tempdima{\hfill \QCBOptA\hfill}%
              }%
              \wd1=\@tempdima
           \else
              \hsize=\@tempdima
              \setbox1=\vbox{%
                \unskip\GRAPHIC{#5}{#4}{#1}{#2}{0pt}%
              }%
              \wd1=\@tempdima
           \fi
         \fi
         \@tempdimb=\ht1
         \advance\@tempdimb by -#2
         \advance\@tempdimb by #3
         \leavevmode
         \raise -\@tempdimb \hbox{\box1}%
      \fi
      \egroup%
}%
\def\DFRAME#1#2#3#4#5{%
  \vspace\topsep
  \hfil\break
  \bgroup
     \leftskip\@flushglue
	 \rightskip\@flushglue
	 \parindent\z@
	 \parfillskip\z@skip
     \let\QCTOptA\empty
     \let\QCTOptB\empty
     \let\QCBOptA\empty
     \let\QCBOptB\empty
	 \vbox\bgroup
        \ifOverFrame 
           #5\QCTOptA\par
        \fi
        \GRAPHIC{#4}{#3}{#1}{#2}{\z@}%
        \ifUnderFrame 
           \break#5\QCBOptA
        \fi
	 \egroup
  \egroup
  \vspace\topsep
  \break
}%
\def\FFRAME#1#2#3#4#5#6#7{%
  \@ifundefined{floatstyle}
    {
     \begin{figure}[#1]%
    }
    {
	 \ifx#1h
      \begin{figure}[H]%
	 \else
      \begin{figure}[#1]%
	 \fi
	}
  \let\QCTOptA\empty
  \let\QCTOptB\empty
  \let\QCBOptA\empty
  \let\QCBOptB\empty
  \ifOverFrame
    #4
    \ifx\QCTOptA\empty
    \else
      \ifx\QCTOptB\empty
        \caption{\QCTOptA}%
      \else
        \caption[\QCTOptB]{\QCTOptA}%
      \fi
    \fi
    \ifUnderFrame\else
      \label{#5}%
    \fi
  \else
    \UnderFrametrue%
  \fi
  \begin{center}\GRAPHIC{#7}{#6}{#2}{#3}{\z@}\end{center}%
  \ifUnderFrame
    #4
    \ifx\QCBOptA\empty
      \caption{}%
    \else
      \ifx\QCBOptB\empty
        \caption{\QCBOptA}%
      \else
        \caption[\QCBOptB]{\QCBOptA}%
      \fi
    \fi
    \label{#5}%
  \fi
  \end{figure}%
 }%
\def\makeactives{
  \catcode`\"=\active
  \catcode`\;=\active
  \catcode`\:=\active
  \catcode`\'=\active
  \catcode`\~=\active
}
   \gdef\activesoff{%
      \def"{\string"}%
      \def;{\string;}%
      \def:{\string:}%
      \def'{\string'}%
      \def~{\string~}%
    }
\def\FRAME#1#2#3#4#5#6#7#8{%
 \bgroup
 \ifnum\@msidraft=\@ne
   \wasdrafttrue
 \else
   \wasdraftfalse%
 \fi
 \def\LaTeXparams{}%
 \dispkind=\z@
 \def\LaTeXparams{}%
 \doFRAMEparams{#1}%
 \ifnum\dispkind=\z@\IFRAME{#2}{#3}{#4}{#7}{#8}{#5}\else
  \ifnum\dispkind=\@ne\DFRAME{#2}{#3}{#7}{#8}{#5}\else
   \ifnum\dispkind=\tw@
    \edef\@tempa{\noexpand\FFRAME{\LaTeXparams}}%
    \@tempa{#2}{#3}{#5}{#6}{#7}{#8}%
    \fi
   \fi
  \fi
  \ifwasdraft\@msidraft=1\else\@msidraft=0\fi{}%
  \egroup
 }%
\def\TEXUX#1{"texux"}
\long\def\QQQ#1#2{%
     \long\expandafter\def\csname#1\endcsname{#2}}%
\long\def\QQA#1#2{}%
\def\QTR#1#2{{\csname#1\endcsname {#2}}}%
\def\EXPAND#1[#2]#3{}%
\def\NOEXPAND#1[#2]#3{}%
\def\LaTeXparent#1{}%
\def\ChildStyles#1{}%
\def\ChildDefaults#1{}%
\def\QTagDef#1#2#3{}%
  \providecommand{\UNICODE}[2][]{\protect\rule{.1in}{.1in}}
  \providecommand{\U}[1]{\protect\rule{.1in}{.1in}}
\def\QQfnmark#1{\footnotemark}
 \def\abstract{%
  \if@twocolumn
   \section*{Abstract (Not appropriate in this style!)}%
   \else \small 
   \begin{center}{\bf Abstract\vspace{-.5em}\vspace{\z@}}\end{center}%
   \quotation 
   \fi
  }%
   \def\registered{\relax\ifmmode{}\r@gistered
                    \else$\m@th\r@gistered$\fi}%
 \def\r@gistered{^{\ooalign
  {\hfil\raise.07ex\hbox{$\scriptstyle\rm\text{R}$}\hfil\crcr
  \mathhexbox20D}}}}{}%
\newdimen\theight
\def\newfmtname{LaTeX2e}
  \DeclareOldFontCommand{\rm}{\normalfont\rmfamily}{\mathrm}
  \DeclareOldFontCommand{\sf}{\normalfont\sffamily}{\mathsf}
  \DeclareOldFontCommand{\tt}{\normalfont\ttfamily}{\mathtt}
  \DeclareOldFontCommand{\bf}{\normalfont\bfseries}{\mathbf}
  \DeclareOldFontCommand{\it}{\normalfont\itshape}{\mathit}
  \DeclareOldFontCommand{\sl}{\normalfont\slshape}{\@nomath\sl}
  \DeclareOldFontCommand{\sc}{\normalfont\scshape}{\@nomath\sc}
\def\alpha{{\Greekmath 010B}}%
\def\beta{{\Greekmath 010C}}%
\def\gamma{{\Greekmath 010D}}%
\def\delta{{\Greekmath 010E}}%
\def\epsilon{{\Greekmath 010F}}%
\def\zeta{{\Greekmath 0110}}%
\def\eta{{\Greekmath 0111}}%
\def\theta{{\Greekmath 0112}}%
\def\iota{{\Greekmath 0113}}%
\def\kappa{{\Greekmath 0114}}%
\def\lambda{{\Greekmath 0115}}%
\def\mu{{\Greekmath 0116}}%
\def\nu{{\Greekmath 0117}}%
\def\xi{{\Greekmath 0118}}%
\def\pi{{\Greekmath 0119}}%
\def\rho{{\Greekmath 011A}}%
\def\sigma{{\Greekmath 011B}}%
\def\tau{{\Greekmath 011C}}%
\def\upsilon{{\Greekmath 011D}}%
\def\phi{{\Greekmath 011E}}%
\def\chi{{\Greekmath 011F}}%
\def\psi{{\Greekmath 0120}}%
\def\omega{{\Greekmath 0121}}%
\def\varepsilon{{\Greekmath 0122}}%
\def\vartheta{{\Greekmath 0123}}%
\def\varpi{{\Greekmath 0124}}%
\def\varrho{{\Greekmath 0125}}%
\def\varsigma{{\Greekmath 0126}}%
\def\varphi{{\Greekmath 0127}}%
\def\nabla{{\Greekmath 0272}}
\def\FindBoldGroup{%
   {\setbox0=\hbox{$\mathbf{x\global\edef\theboldgroup{\the\mathgroup}}$}}%
}
\def\Greekmath#1#2#3#4{%
    \if@compatibility
        \ifnum\mathgroup=\symbold
           \mathchoice{\mbox{\boldmath$\displaystyle\mathchar"#1#2#3#4$}}%
                      {\mbox{\boldmath$\textstyle\mathchar"#1#2#3#4$}}%
                      {\mbox{\boldmath$\scriptstyle\mathchar"#1#2#3#4$}}%
                      {\mbox{\boldmath$\scriptscriptstyle\mathchar"#1#2#3#4$}}%
        \else
           \mathchar"#1#2#3#4%
        \fi 
    \else 
        \FindBoldGroup
        \ifnum\mathgroup=\theboldgroup 
           \mathchoice{\mbox{\boldmath$\displaystyle\mathchar"#1#2#3#4$}}%
                      {\mbox{\boldmath$\textstyle\mathchar"#1#2#3#4$}}%
                      {\mbox{\boldmath$\scriptstyle\mathchar"#1#2#3#4$}}%
                      {\mbox{\boldmath$\scriptscriptstyle\mathchar"#1#2#3#4$}}%
        \else
           \mathchar"#1#2#3#4%
        \fi     	    
	  \fi}
\newif\ifGreekBold  \GreekBoldfalse
\let\SAVEPBF=\pbf
\def\pbf{\GreekBoldtrue\SAVEPBF}%
  \newcounter{equationnumber}  
  \def\mathletters{%
     \addtocounter{equation}{1}
     \edef\@currentlabel{\theequation}%
     \setcounter{equationnumber}{\c@equation}
     \setcounter{equation}{0}%
     \edef\theequation{\@currentlabel\noexpand\alph{equation}}%
  }
    \def\BibTeX{{\rm B\kern-.05em{\sc i\kern-.025em b}\kern-.08em
                 T\kern-.1667em\lower.7ex\hbox{E}\kern-.125emX}}}{}%
\def\AmS{{\protect\usefont{OMS}{cmsy}{m}{n}%
                A\kern-.1667em\lower.5ex\hbox{M}\kern-.125emS}}}{}%
\def\@@eqncr{\let\@tempa\relax
    \ifcase\@eqcnt \def\@tempa{& & &}\or \def\@tempa{& &}%
      \else \def\@tempa{&}\fi
     \@tempa
     \if@eqnsw
        \iftag@
           \@taggnum
        \else
           \@eqnnum\stepcounter{equation}%
        \fi
     \fi
     \global\tag@false
     \global\@eqnswtrue
     \global\@eqcnt\z@\cr}
\def\TCItag{\@ifnextchar*{\@TCItagstar}{\@TCItag}}
\def\@TCItag#1{%
    \global\tag@true
    \global\def\@taggnum{(#1)}%
    \global\def\@currentlabel{#1}}
\def\@TCItagstar*#1{%
    \global\tag@true
    \global\def\@taggnum{#1}%
    \global\def\@currentlabel{#1}}
\def\tint{\msi@int\textstyle\int}%
\def\tiint{\msi@int\textstyle\iint}%
\def\tiiint{\msi@int\textstyle\iiint}%
\def\tiiiint{\msi@int\textstyle\iiiint}%
\def\tidotsint{\msi@int\textstyle\idotsint}%
\def\toint{\msi@int\textstyle\oint}%
\newtoks\temptoksa
\newtoks\temptoksb
\newtoks\temptoksc
\def\msi@int#1#2{%
 \def\@temp{{#1#2\the\temptoksc_{\the\temptoksa}^{\the\temptoksb}}}%
 \futurelet\@nextcs
 \@int
}
\def\@int{%
   \ifx\@nextcs\limits
      \typeout{Found limits}%
      \temptoksc={\limits}%
	  \let\@next\@intgobble%
   \else\ifx\@nextcs\nolimits
      \typeout{Found nolimits}%
      \temptoksc={\nolimits}%
	  \let\@next\@intgobble%
   \else
      \typeout{Did not find limits or no limits}%
      \temptoksc={}%
      \let\@next\msi@limits%
   \fi\fi
   \@next   
}%
\def\@intgobble#1{%
   \typeout{arg is #1}%
   \msi@limits
}
\def\msi@limits{%
   \temptoksa={}%
   \temptoksb={}%
   \@ifnextchar_{\@limitsa}{\@limitsb}%
}
\def\@limitsa_#1{%
   \temptoksa={#1}%
   \@ifnextchar^{\@limitsc}{\@temp}%
}
\def\@limitsb{%
   \@ifnextchar^{\@limitsc}{\@temp}%
}
\def\@limitsc^#1{%
   \temptoksb={#1}%
   \@ifnextchar_{\@limitsd}{\@temp}%
}
\def\@limitsd_#1{%
   \temptoksa={#1}%
   \@temp
}
\def\dint{\msi@int\displaystyle\int}%
\def\diint{\msi@int\displaystyle\iint}%
\def\diiint{\msi@int\displaystyle\iiint}%
\def\diiiint{\msi@int\displaystyle\iiiint}%
\def\didotsint{\msi@int\displaystyle\idotsint}%
\def\doint{\msi@int\displaystyle\oint}%
\def\ExitTCILatex{\makeatother }
\if@compatibility\message{amsmath already loaded}\fi\aftergroup\ExitTCILatex}
\if@compatibility\message{amstex already loaded}\fi\aftergroup\ExitTCILatex}
\if@compatibility\message{amsgen already loaded}\fi\aftergroup\ExitTCILatex}
\let\DOTSI\relax
\def\RIfM@{\relax\ifmmode}%
\def\FN@{\futurelet\next}%
\def\iint{\DOTSI\intno@\tw@\FN@\ints@}%
\def\iiint{\DOTSI\intno@\thr@@\FN@\ints@}%
\def\iiiint{\DOTSI\intno@4 \FN@\ints@}%
\def\idotsint{\DOTSI\intno@\z@\FN@\ints@}%
\def\ints@{\findlimits@\ints@@}%
\newif\iflimtoken@
\newif\iflimits@
\def\findlimits@{\limtoken@true\ifx\next\limits\limits@true
 \else\ifx\next\nolimits\limits@false\else
 \limtoken@false\ifx\ilimits@\nolimits\limits@false\else
 \ifinner\limits@false\else\limits@true\fi\fi\fi\fi}%
\def\multint@{\int\ifnum\intno@=\z@\intdots@                          
 \else\intkern@\fi                                                    
 \ifnum\intno@>\tw@\int\intkern@\fi                                   
 \ifnum\intno@>\thr@@\int\intkern@\fi                                 
 \int}
\def\multintlimits@{\intop\ifnum\intno@=\z@\intdots@\else\intkern@\fi
 \ifnum\intno@>\tw@\intop\intkern@\fi
 \ifnum\intno@>\thr@@\intop\intkern@\fi\intop}%
\def\intic@{%
    \mathchoice{\hskip.5em}{\hskip.4em}{\hskip.4em}{\hskip.4em}}%
\def\negintic@{\mathchoice
 {\hskip-.5em}{\hskip-.4em}{\hskip-.4em}{\hskip-.4em}}%
\def\ints@@{\iflimtoken@                                              
 \def\ints@@@{\iflimits@\negintic@
   \mathop{\intic@\multintlimits@}\limits                             
  \else\multint@\nolimits\fi                                          
  \eat@}
 \else                                                                
 \def\ints@@@{\iflimits@\negintic@
  \mathop{\intic@\multintlimits@}\limits\else
  \multint@\nolimits\fi}\fi\ints@@@}%
\def\intkern@{\mathchoice{\!\!\!}{\!\!}{\!\!}{\!\!}}%
\def\plaincdots@{\mathinner{\cdotp\cdotp\cdotp}}%
\def\intdots@{\mathchoice{\plaincdots@}%
 {{\cdotp}\mkern1.5mu{\cdotp}\mkern1.5mu{\cdotp}}%
 {{\cdotp}\mkern1mu{\cdotp}\mkern1mu{\cdotp}}%
 {{\cdotp}\mkern1mu{\cdotp}\mkern1mu{\cdotp}}}%
\def\RIfM@{\relax\protect\ifmmode}
\def\text{\RIfM@\expandafter\text@\else\expandafter\mbox\fi}
\let\nfss@text\text
\def\text@#1{\mathchoice
   {\textdef@\displaystyle\f@size{#1}}%
   {\textdef@\textstyle\tf@size{\firstchoice@false #1}}%
   {\textdef@\textstyle\sf@size{\firstchoice@false #1}}%
   {\textdef@\textstyle \ssf@size{\firstchoice@false #1}}%
   \glb@settings}
\def\textdef@#1#2#3{\hbox{{%
                    \everymath{#1}%
                    \let\f@size#2\selectfont
                    #3}}}
\newif\iffirstchoice@
\def\Let@{\relax\iffalse{\fi\let\\=\cr\iffalse}\fi}%
\def\vspace@{\def\vspace##1{\crcr\noalign{\vskip##1\relax}}}%
\def\multilimits@{\bgroup\vspace@\Let@
 \baselineskip\fontdimen10 \scriptfont\tw@
 \advance\baselineskip\fontdimen12 \scriptfont\tw@
 \lineskip\thr@@\fontdimen8 \scriptfont\thr@@
 \lineskiplimit\lineskip
 \vbox\bgroup\ialign\bgroup\hfil$\m@th\scriptstyle{##}$\hfil\crcr}%
\def\Sb{_\multilimits@}%
\def\endSb{\crcr\egroup\egroup\egroup}%
\def\Sp{^\multilimits@}%
\newdimen\ex@
\def\rightarrowfill@#1{$#1\m@th\mathord-\mkern-6mu\cleaders
 \hbox{$#1\mkern-2mu\mathord-\mkern-2mu$}\hfill
 \mkern-6mu\mathord\rightarrow$}%
\def\leftarrowfill@#1{$#1\m@th\mathord\leftarrow\mkern-6mu\cleaders
 \hbox{$#1\mkern-2mu\mathord-\mkern-2mu$}\hfill\mkern-6mu\mathord-$}%
\def\leftrightarrowfill@#1{$#1\m@th\mathord\leftarrow
\mkern-6mu\cleaders
 \hbox{$#1\mkern-2mu\mathord-\mkern-2mu$}\hfill
 \mkern-6mu\mathord\rightarrow$}%
\def\overrightarrow{\mathpalette\overrightarrow@}%
\def\overrightarrow@#1#2{\vbox{\ialign{##\crcr\rightarrowfill@#1\crcr
 \noalign{\kern-\ex@\nointerlineskip}$\m@th\hfil#1#2\hfil$\crcr}}}%
\def\overleftarrow{\mathpalette\overleftarrow@}%
\def\overleftarrow@#1#2{\vbox{\ialign{##\crcr\leftarrowfill@#1\crcr
 \noalign{\kern-\ex@\nointerlineskip}$\m@th\hfil#1#2\hfil$\crcr}}}%
\def\overleftrightarrow{\mathpalette\overleftrightarrow@}%
\def\overleftrightarrow@#1#2{\vbox{\ialign{##\crcr
   \leftrightarrowfill@#1\crcr
 \noalign{\kern-\ex@\nointerlineskip}$\m@th\hfil#1#2\hfil$\crcr}}}%
\def\underrightarrow{\mathpalette\underrightarrow@}%
\def\underrightarrow@#1#2{\vtop{\ialign{##\crcr$\m@th\hfil#1#2\hfil
  $\crcr\noalign{\nointerlineskip}\rightarrowfill@#1\crcr}}}%
\def\underleftarrow{\mathpalette\underleftarrow@}%
\def\underleftarrow@#1#2{\vtop{\ialign{##\crcr$\m@th\hfil#1#2\hfil
  $\crcr\noalign{\nointerlineskip}\leftarrowfill@#1\crcr}}}%
\def\underleftrightarrow{\mathpalette\underleftrightarrow@}%
\def\underleftrightarrow@#1#2{\vtop{\ialign{##\crcr$\m@th
  \hfil#1#2\hfil$\crcr
 \noalign{\nointerlineskip}\leftrightarrowfill@#1\crcr}}}%
\def\qopnamewl@#1{\mathop{\operator@font#1}\nlimits@}
\let\nlimits@\displaylimits
\def\setboxz@h{\setbox\z@\hbox}
\def\varlim@#1#2{\mathop{\vtop{\ialign{##\crcr
 \hfil$#1\m@th\operator@font lim$\hfil\crcr
 \noalign{\nointerlineskip}#2#1\crcr
 \noalign{\nointerlineskip\kern-\ex@}\crcr}}}}
 \def\rightarrowfill@#1{\m@th\setboxz@h{$#1-$}\ht\z@\z@
  $#1\copy\z@\mkern-6mu\cleaders
  \hbox{$#1\mkern-2mu\box\z@\mkern-2mu$}\hfill
  \mkern-6mu\mathord\rightarrow$}
\def\leftarrowfill@#1{\m@th\setboxz@h{$#1-$}\ht\z@\z@
  $#1\mathord\leftarrow\mkern-6mu\cleaders
  \hbox{$#1\mkern-2mu\copy\z@\mkern-2mu$}\hfill
  \mkern-6mu\box\z@$}
\def\projlim{\qopnamewl@{proj\,lim}}
\def\injlim{\qopnamewl@{inj\,lim}}
\def\varinjlim{\mathpalette\varlim@\rightarrowfill@}
\def\varprojlim{\mathpalette\varlim@\leftarrowfill@}
\def\varliminf{\mathpalette\varliminf@{}}
\def\varliminf@#1{\mathop{\underline{\vrule\@depth.2\ex@\@width\z@
   \hbox{$#1\m@th\operator@font lim$}}}}
\def\varlimsup{\mathpalette\varlimsup@{}}
\def\varlimsup@#1{\mathop{\overline
  {\hbox{$#1\m@th\operator@font lim$}}}}
\def\align{\@verbatim \frenchspacing\@vobeyspaces \@alignverbatim
You are using the "align" environment in a style in which it is not defined.}
\let\csname endalign*\endcsname =\endtrivlist
\def\alignat{\@verbatim \frenchspacing\@vobeyspaces \@alignatverbatim
You are using the "alignat" environment in a style in which it is not defined.}
\let\csname endalignat*\endcsname =\endtrivlist
\def\xalignat{\@verbatim \frenchspacing\@vobeyspaces \@xalignatverbatim
You are using the "xalignat" environment in a style in which it is not defined.}
\let\csname endxalignat*\endcsname =\endtrivlist
\def\gather{\@verbatim \frenchspacing\@vobeyspaces \@gatherverbatim
You are using the "gather" environment in a style in which it is not defined.}
\let\csname endgather*\endcsname =\endtrivlist
\def\multiline{\@verbatim \frenchspacing\@vobeyspaces \@multilineverbatim
You are using the "multiline" environment in a style in which it is not defined.}
\let\csname endmultiline*\endcsname =\endtrivlist
\def\arrax{\@verbatim \frenchspacing\@vobeyspaces \@arraxverbatim
You are using a type of "array" construct that is only allowed in AmS-LaTeX.}
\def\tabulax{\@verbatim \frenchspacing\@vobeyspaces \@tabulaxverbatim
You are using a type of "tabular" construct that is only allowed in AmS-LaTeX.}
\let\csname endarrax*\endcsname =\endtrivlist
\let\csname endtabulax*\endcsname =\endtrivlist
 \def\endequation{%
     \ifmmode\ifinner 
      \iftag@
        \addtocounter{equation}{-1} 
        $\hfil
           \displaywidth\linewidth\@taggnum\egroup \endtrivlist
        \global\tag@false
        \global\@ignoretrue   
      \else
        $\hfil
           \displaywidth\linewidth\@eqnnum\egroup \endtrivlist
        \global\tag@false
        \global\@ignoretrue 
      \fi
     \else   
      \iftag@
        \addtocounter{equation}{-1} 
        \eqno \hbox{\@taggnum}
        \global\tag@false%
        $$\global\@ignoretrue
      \else
        \eqno \hbox{\@eqnnum}
        $$\global\@ignoretrue
      \fi
     \fi\fi
 } 
 \newif\iftag@ \tag@false
 \def\TCItag{\@ifnextchar*{\@TCItagstar}{\@TCItag}}
 \def\@TCItag#1{%
     \global\tag@true
     \global\def\@taggnum{(#1)}%
     \global\def\@currentlabel{#1}}
 \def\@TCItagstar*#1{%
     \global\tag@true
     \global\def\@taggnum{#1}%
     \global\def\@currentlabel{#1}}
     \def\tag{\@ifnextchar*{\@tagstar}{\@tag}}
     \def\@tag#1{%
         \global\tag@true
         \global\def\@taggnum{(#1)}}
     \def\@tagstar*#1{%
         \global\tag@true
         \global\def\@taggnum{#1}}
\begin{document}
\title[Beahviors of the energy]{Energy decay rates for solutions of the wave
equation with linear damping in exterior domain}
\author{M. Daoulatli}
\address{Department of Mathematics, FSB, University of Carthage \& LAMSIN,
ENIT, University of Tunis Elmanar}
\email[M. Daoulatli]{moez.daoulatli@infcom.rnu.tn}
\date{\today }
\subjclass[2000]{Primary: 35L05, 35B40; Secondary: 35L70, 35B35 }
\keywords{ Wave equation, linear damping, Decay rate}
\thanks{This work is supported by the Tunisian Ministry for Scientific
Research and Technology within the LAB-STI 02 program.}

\begin{abstract}
In this paper we study the behavior of the energy and the $L^{2}$ norm of
solutions of the wave equation with localized linear damping in exterior
domain. Let $u$ be a solution of the wave system with initial data $\left(
u_{0},u_{1}\right) $. We assume that the damper is positive at infinity then
under the Geometric Control Condition of Bardos et al \cite{blr} (1992), we
prove that:

\begin{enumerate}
\item The total energy $E_{u}\left( t\right) \leq C_{0}\left( 1+t\right)
^{-1}I_{0}$ and $\left\Vert u\left( t\right) \right\Vert _{L^{2}}^{2}\leq
C_{0}I_{0}$ if $\left( u_{0},u_{1}\right) $ belong to $H_{0}^{1}\left(
\Omega \right) \times L^{2}\left( \Omega \right) ,$ where 
\begin{equation*}
I_{0}=\left\Vert u_{0}\right\Vert _{H^{1}}^{2}+\left\Vert u_{1}\right\Vert
_{L^{2}}^{2}.
\end{equation*}

\item The total energy $E_{u}\left( t\right) \leq C_{2}\left( 1+t\right)
^{-2}I_{1}$ and $\left\Vert u\left( t\right) \right\Vert _{L^{2}}^{2}\leq
C_{2}\left( 1+t\right) ^{-1}I_{1},$ if the initial data $\left(
u_{0},u_{1}\right) $ belong to $H_{0}^{1}\left( \Omega \right) \times
L^{2}\left( \Omega \right) $ \ and verifies $\left\Vert d\left( \cdot
\right) \left( u_{1}+au_{0}\right) \right\Vert _{L^{2}}<+\infty ,$ where 
\begin{equation*}
I_{1}=\left\Vert u_{0}\right\Vert _{H^{1}}^{2}+\left\Vert u_{1}\right\Vert
_{L^{2}}^{2}+\left\Vert d\left( \cdot \right) \left( u_{1}+au_{0}\right)
\right\Vert _{L^{2}}^{2}.
\end{equation*}%
$.$
\end{enumerate}
\end{abstract}

\maketitle

\section{Introduction and Statement of the result}

Let $O$ be a compact domain of $%
\mathbb{R}
^{d}$ $\left( d\geq 2\right) $ with $C^{\infty }$ boundary $\Gamma =\partial
\Omega $ and $\Omega =\mathbb{R}^{d}\backslash O$. Consider the following
wave equation with localized linear damping 
\begin{equation}
\left\{ 
\begin{array}{lc}
\partial _{t}^{2}u-\Delta u+a\left( x\right) \partial _{t}u=0 & \text{in }%
\mathbb{R}_{+}\times \Omega , \\ 
u=0 & \text{on }\mathbb{R}_{+}\times \Gamma , \\ 
u\left( 0,x\right) =u_{0}\quad \text{ and }\quad \partial _{t}u\left(
0,x\right) =u_{1}. & 
\end{array}%
\right.  \label{system}
\end{equation}%
Here $\Delta $ denotes the Laplace operator in the space variables. $a\left(
x\right) $ is a nonnegative function in $L^{\infty }\left( \Omega \right) $.

Let 
\begin{equation*}
A=\left( 
\begin{array}{cc}
0 & I \\ 
\Delta & -a%
\end{array}%
\right) ,
\end{equation*}%
and $H=H_{D}\left( \Omega \right) \times L^{2}\left( \Omega \right) $, the
completion of $(C_{0}^{\infty }\left( \Omega \right) )^{2}$\ with respect to
the norme%
\begin{equation*}
\left\Vert (\varphi _{0},\varphi _{1})\right\Vert _{H}^{2}=\frac{1}{2}%
\int_{\Omega }\left\vert \nabla \varphi _{0}\right\vert ^{2}+\left\vert
\varphi _{1}\right\vert ^{2}dx,
\end{equation*}%
then the domain of $A$%
\begin{equation*}
D\left( A\right) =\left\{ \left( u_{0},u_{1}\right) \in H,A\left( 
\begin{array}{c}
u_{0} \\ 
u_{1}%
\end{array}%
\right) \in H\right\} .
\end{equation*}%
Let $n\in 
\mathbb{N}
$ and $\left( u_{0},u_{1}\right) \in D\left( A^{n}\right) $. Linear
semigroup theory applied to (\ref{system}), provides existence of a unique
solution $u$ in the class%
\begin{equation*}
\left( u,\partial _{t}u\right) \in C^{k}\left( 
\mathbb{R}
_{+},D\left( A^{n-k}\right) \right) ,\text{ with }k\leq n.
\end{equation*}

Moreover, if $\left( u_{0},u_{1}\right) $ is in $H_{0}^{1}\left( \Omega
\right) \times L^{2}\left( \Omega \right) $, then the system (\ref{system}),
admits a unique solution $u$ in the class%
\begin{equation*}
u\in C^{0}\left( 
\mathbb{R}
_{+},H_{0}^{1}\left( \Omega \right) \right) \cap C^{1}\left( 
\mathbb{R}
_{+},L^{2}\left( \Omega \right) \right) .
\end{equation*}%
With $\left( \text{\ref{sys:nonlinear}}\right) $ we associate the energy
functional given by%
\begin{equation*}
E_{u}\left( t\right) =\frac{1}{2}\int_{\Omega }\left( \left\vert \nabla
u\left( t,x\right) \right\vert ^{2}+\left\vert \partial _{t}u\left(
t,x\right) \right\vert ^{2}\right) dx.
\end{equation*}%
The energy functional satisfies the following identity%
\begin{equation}
E_{u}\left( T\right) +\int_{0}^{T}\int_{\Omega }a\left( x\right) \left\vert
\partial _{t}u\right\vert ^{2}dxdt=E_{u}\left( 0\right) ,
\label{energy inequality}
\end{equation}%
for every $T\geq 0$.

Zuazua \cite{zuazua}, Nakao \cite{nakao klein}, Dehman et al \cite{dlz} and
Aloui et al \cite{aloui-b-n}\ have considered the problem for the
Klein-Gordon type wave equations with localized dissipations. For the
Klein-Gordon equations the energy functional it self contains the $L^{2}$
norm and boundedness of $L^{2}$ norm of solution is trivial. Thus under a
geometric condition we can show that the energy decays exponentially while
for the system $\left( \ref{system}\right) $ the energy decay rate is weaker
and more delicate.

In the case when $a\left( x\right) \geq \epsilon _{0}>0$ in all of $\Omega $
we know that 
\begin{equation}
E_{u}\left( t\right) \leq C_{0}\left( 1+t\right) ^{-1}I_{0}\text{ and }%
\left\Vert u\left( t\right) \right\Vert _{L^{2}}^{2}\leq C_{0}I_{0},\text{
for all }t\geq 0,  \label{energy rate 1}
\end{equation}%
for weak solution $u$ to the system $\left( \ref{system}\right) $ with
initial data in $H_{0}^{1}\left( \Omega \right) \times L^{2}\left( \Omega
\right) $.

Nakao in \cite{nakao} obtained the same estimates in $\left( \ref{energy
rate 1}\right) $ for a damper $a$ which is positive near some part of the
boundary (Lions's condition) and near infinity.

On the other hand, Dan-Shibata \cite{dan-shi} studied the local energy decay
estimates for the compactly supported weak solutions of $\left( \ref{system}%
\right) $ with $a\left( x\right) =1$%
\begin{equation}
\int_{\Omega \cap B_{R}}\left( \left\vert \nabla u\left( t,x\right)
\right\vert ^{2}+\left\vert \partial _{t}u\left( t,x\right) \right\vert
^{2}\right) dx\leq C\left( 1+t\right) ^{-d},  \label{local energy estimate}
\end{equation}%
where $B_{R}=\left\{ x\in 
\mathbb{R}
^{d},\left\vert x\right\vert <R\right\} $.

Furthermore Ikehata and Matsuyama in \cite{ike-matsu} obtained a more
precise decay estimate for the energy of solutions of the problem $\left( %
\ref{system}\right) $ with $a\left( x\right) =1$ and for weighted initial
data%
\begin{equation}
E_{u}\left( t\right) \leq C_{2}\left( 1+t\right) ^{-2}I_{1}\text{ and }%
\left\Vert u\left( t\right) \right\Vert _{L^{2}}^{2}\leq C_{2}\left(
1+t\right) ^{-1}I_{1}\text{ for all }t\geq 0.  \label{energy rate 2}
\end{equation}%
Especially this estimate seems sharp for $d=2$ as compared with that of \cite%
{dan-shi}.

Ikehata in \cite{ikehata} derived a fast decay rate like $\left( \ref{energy
rate 2}\right) $ for solutions of the system $\left( \ref{system}\right) $
with weighted initial data and assuming that $a\left( x\right) \geq \epsilon
_{0}>0$ at infinity and $O=%
\mathbb{R}
^{d}\backslash \Omega $ is star shaped with respect to the origin.

For another type of total energy decay property we refer the reader to \cite%
{ikehata 1,kawa,Racke,nakao-ba} and reference therein.

Before introducing our results we shall state several assumptions:

\begin{description}
\item[Hyp A] There exists $L>0$ such that 
\begin{equation*}
a\left( x\right) >\epsilon _{0}>0\text{ for }\left\vert x\right\vert \geq L.
\end{equation*}
\end{description}

\begin{definition}
$\left( \omega ,T\right) $ geometrically controls $\Omega $, i.e. every
generalized geodesic travelling with speed $1$ and issued at $t=0$, enters
the set $\omega $ in a time $t<T$.
\end{definition}

This condition is called Geometric Control Condition (see e.g.\cite{blr} ).
We shall relate the open subset $\omega $ with the damper $a$ by%
\begin{equation*}
\omega =\left\{ x\in \Omega ;a\left( x\right) >\epsilon _{0}>0\right\} .
\end{equation*}

We note that according to \cite{blr} and \cite{bg}\ the Geometric Control
Condition of Bardos et al is a necessary and sufficient condition for the
stabilization of the wave equation in bounded domain.

The goal of this paper is to prove that under the geometric control
condition of Bardos et al \cite{blr} and for a damper $a$ positive near
infinity, the estimates in $\left( \ref{energy rate 1}\right) $ hold for all
solutions of the system $\left( \ref{system}\right) $ with initial data in $%
H_{0}^{1}\left( \Omega \right) \times L^{2}\left( \Omega \right) $ and to
show that the estimates in $\left( \ref{energy rate 2}\right) $ hold for all
solutions of the system $\left( \ref{system}\right) $ with weighted initial
data. Moreover we show that for every $p\in 
\mathbb{N}
^{\ast }$ there exists a initial data in $H_{0}^{1}\left( \Omega \right)
\times L^{2}\left( \Omega \right) $ such that the solution $v$ of $\left( %
\ref{system}\right) $ verifies%
\begin{equation*}
E_{v}\left( t\right) \leq C\left( 1+t\right) ^{-p}\text{ and }\left\Vert
v\left( t\right) \right\Vert _{L^{2}}^{2}\leq C\left( 1+t\right) ^{-p+1},%
\text{ for all }t\geq 0
\end{equation*}%
and for some $C>0$ depending on the initial data.

\begin{theorem}
We assume that Hyp A holds and $(\omega $,$T)$ geometrically controls $%
\Omega $. Then there exists $C_{0}>0$ such that the following estimates%
\begin{equation*}
E_{u}\left( t\right) \leq C_{0}\left( 1+t\right) ^{-1}I_{0}\text{ and }%
\left\Vert u\left( t\right) \right\Vert _{L^{2}}^{2}\leq C_{0}I_{0},\text{
for all }t\geq 0,
\end{equation*}%
hold for every solution $u$ of $\left( \ref{system}\right) $ with initial
data $\left( u_{0},u_{1}\right) $ in $H_{0}^{1}\left( \Omega \right) \times
L^{2}\left( \Omega \right) $, where%
\begin{equation*}
I_{0}=\left\Vert u_{0}\right\Vert _{H^{1}}^{2}+\left\Vert u_{1}\right\Vert
_{L^{2}}^{2}.
\end{equation*}
\end{theorem}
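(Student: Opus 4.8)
The plan is to derive the decay from two ingredients: that $E_u$ is non-increasing, and that it is time-integrable with $\int_0^\infty E_u(t)\,dt\leq C I_0$. Granting both, monotonicity gives for every $t\geq0$ that $t\,E_u(t)=\int_0^t E_u(t)\,ds\leq\int_0^\infty E_u(s)\,ds\leq C I_0$, hence $E_u(t)\leq C I_0(1+t)^{-1}$ once $E_u(t)\leq E_u(0)\leq I_0$ is used to cover $t\leq1$. Monotonicity is immediate from the energy identity (\ref{energy inequality}), which moreover yields the \emph{integrable dissipation}
\[
\int_0^\infty\int_\Omega a(x)\,|\partial_t u|^2\,dx\,dt \leq E_u(0)\leq I_0.
\]
So the whole theorem rests on the time-integrated energy bound. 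It is worth stressing that the resulting rate is only $t^{-1}$, not exponential as in a bounded domain: this is the correct exterior phenomenon, and below it will be traced to a lower-order term that cannot be absorbed.

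The key estimate is the \emph{space-time kinetic bound} $\int_0^\infty\int_\Omega|\partial_t u|^2\,dx\,dt\leq C I_0$, from which both conclusions will be extracted through the multiplier identity obtained by testing (\ref{system}) against $u$,
\[
2\int_0^T E_u(t)\,dt = 2\int_0^T\!\!\int_\Omega|\partial_t u|^2\,dx\,dt - \Big[\int_\Omega u\,\partial_t u\,dx+\tfrac12\int_\Omega a\,u^2\,dx\Big]_0^T.
\]
To prove the kinetic bound, split $\Omega=\{|x|\geq L\}\cup\{|x|<L\}$. On the far region Hyp~A gives $a\geq\epsilon_0$, so by the integrable dissipation $\int_0^\infty\int_{|x|\geq L}|\partial_t u|^2\leq\epsilon_0^{-1}I_0$. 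The bounded near region is where $a$ may vanish, and here the Geometric Control Condition enters: since $\omega\supset\{|x|\geq L\}$ and $(\omega,T)$ controls $\Omega$, every geodesic through the near region reaches the damped set within time $T$, and an observability/propagation argument over windows of length $T$ yields
\[
\int_{S}^{S+T}\!\!\int_{|x|<L}|\partial_t u|^2\,dx\,dt \leq C\!\int_{(S-T)_+}^{S+2T}\!\!\int_\Omega a\,|\partial_t u|^2\,dx\,dt + C\!\int_{(S-T)_+}^{S+2T}\!\!\int_{|x|<L}|u|^2\,dx\,dt.
\]
Summing over a partition into such windows and using the integrable dissipation reduces everything to controlling the time-integrated \emph{local $L^2$ norm} of $u$.

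That local term is closed by the $L^2$-boundedness estimate, which is also the second assertion of the theorem. Setting $N(t)=\tfrac12\|u(t)\|_{L^2}^2$ and $y(t)=\int_\Omega u\,\partial_t u\,dx+\tfrac12\int_\Omega a\,u^2\,dx$, the equation gives $y'(t)=2\|\partial_t u\|_{L^2}^2-2E_u(t)\leq2\|\partial_t u\|_{L^2}^2$, so the \emph{one-sided} bound $y(t)\leq y(0)+2\int_0^\infty\|\partial_t u\|_{L^2}^2\leq C I_0$ holds with no circular appeal to $\int_0^\infty E_u$ (here $y(0)\leq C I_0$ because $\|u_0\|_{L^2}^2\leq\|u_0\|_{H^1}^2\leq I_0$). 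Since $N'=y-\tfrac12\int_\Omega a u^2$ and $\tfrac12\int_\Omega a u^2\geq\epsilon_0 N-\tfrac{\epsilon_0}{2}\int_{|x|<L}|u|^2$, with the last integral bounded by $C_P E_u\leq C_P I_0$ through the Poincar\'e--Friedrichs inequality on $\{|x|<L\}\cap\Omega$ (valid because $u=0$ on the compact boundary $\Gamma$), one gets $N'+\epsilon_0 N\leq C I_0$, and Gronwall yields $\|u(t)\|_{L^2}^2\leq C I_0$. With this bound the boundary bracket in the multiplier identity is controlled uniformly in $T$ by $\|u(T)\|_{L^2}\|\partial_t u(T)\|_{L^2}+\tfrac12\|a\|_\infty\|u(T)\|_{L^2}^2\leq C I_0$, so the kinetic bound forces $\int_0^T E_u\leq C I_0$ uniformly, whence $\int_0^\infty E_u\leq C I_0$ and the $t^{-1}$ decay follow from the monotonicity above.

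The genuinely hard step is the near-field observability, and specifically the treatment of its local-$L^2$ lower-order term. In a bounded domain one removes such a term by a compactness-plus-unique-continuation argument (Rellich), which makes the constant uniform and produces exponential decay; in the exterior domain global compactness fails and the term survives, which is exactly why the rate degrades to $t^{-1}$. I expect to handle it by localizing the compactness argument to the \emph{bounded} set $\{|x|<L\}$ (legitimate since, by Hyp~A, everything outside is already damped), invoking unique continuation to exclude a nontrivial invisible solution, and then dominating the residual local $L^2$ mass by the energy via Poincar\'e so that it is absorbed by the coupled $L^2$/energy estimate above rather than made to vanish. This localized Bardos--Lebeau--Rauch argument, together with the interlocking of the energy and $L^2$ bounds, is where essentially all the difficulty lies; the remaining multiplier calculus and Gronwall steps are routine.
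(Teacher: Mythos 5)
There is a genuine gap, and it sits exactly where you locate ``essentially all the difficulty'': the lower-order term in your near-field observability estimate. As you write it, each time-window estimate carries a term $C\int\int_{|x|<L}|u|^2\,dx\,dt$ on the right, and summing over windows produces the \emph{time-integrated} local $L^2$ norm $\int_0^\infty\int_{|x|<L}|u|^2\,dx\,dt$. Neither of your two proposed mechanisms controls this quantity. The uniform bound $\|u(t)\|_{L^2}^2\le CI_0$ (your Gronwall step) says nothing about integrability in time. The Poincar\'e route $\int_{|x|<L}|u|^2\le C_P E_u(t)$ turns the term into $C\,C_P\int_0^\infty E_u(s)\,ds$ with a \emph{fixed, non-small} constant, and the multiplier identity only supplies $2\int_0^T E_u$ on the left, so absorption fails unless the observability constant happens to be less than $1$ --- which nothing guarantees. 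Worse, the very decay rate you are proving, $E_u(t)\le CI_0(1+t)^{-1}$, makes $\int_0^\infty\int_{|x|<L}|u|^2\,dx\,dt$ at best logarithmically divergent, so this quantity genuinely cannot be bounded by $CI_0$ and no rearrangement of your scheme will close it. (There is also a circularity in the order of the steps: the kinetic bound needs the local $L^2$ term, the $L^2$ bound needs the kinetic bound, and the local $L^2$ term is never independently controlled.)

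The missing idea --- and the paper's actual mechanism --- is that the compactness--uniqueness argument must be used to produce an observability inequality whose remainder is $\delta E_u(t)$ with $\delta>0$ \emph{arbitrarily small}, and with \emph{no} $L^2$-of-$u$ term at all: the paper's Proposition \ref{proposition onservability global} gives
\begin{equation*}
\int_t^{t+T}\!\!\int_\Omega \chi^2\bigl(|\nabla u|^2+|\partial_t u|^2\bigr)\,dx\,ds
\le C_{T,\delta,\chi}\int_t^{t+T}\!\!\int_\Omega a\,|\partial_t u|^2\,dx\,ds+\delta E_u(t),
\end{equation*}
proved by a contradiction/normalization argument using microlocal defect measures, propagation along geodesics under the GCC, unique continuation, and the Lax--Phillips theorem to kill the limit. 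The remainder $\delta E_u(t)$ is then converted via $E_u(t)\le\int_t^{t+T}\int a|\partial_t u|^2+\frac1T\int_t^{t+T}E_u$ into a $\delta$-small multiple of the two quantities already present in the main inequality, and absorbed precisely because $\delta$ is at your disposal. This is qualitatively different from dominating a surviving $L^2$ term by the energy with a fixed Poincar\'e constant. A secondary remark: the paper obtains the $L^2$ bound not by your Gronwall argument but from the functional $X(t)=\int_\Omega v\,\partial_t v+\tfrac12\int_\Omega a|v|^2+kE_u$ with $v=(1-\psi)u$ supported in the damped far region; $X$ is shown to be essentially decreasing along windows, its lower bound $X(t)\ge\tfrac{\epsilon_0}{4}\int|v|^2$ controls the far-field $L^2$ mass, and the near-field mass is handled by Poincar\'e plus monotonicity of the energy. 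Your Gronwall derivation of the $L^2$ bound would be acceptable once the kinetic bound is secured, but the kinetic bound itself cannot be secured along the route you describe.
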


As a corollary of theorem 1 we have:

\begin{proposition}
Let $n\in 
\mathbb{N}
^{\ast }$. We assume that Hyp A holds and $(\omega $,$T)$ geometrically
controls $\Omega $. Let $\left( u_{0},u_{1}\right) $ in $D\left(
A^{n}\right) $, such that $u_{0}\in L^{2}\left( \Omega \right) $. Then the
solution $u$ of $\left( \ref{system}\right) $ satisfies 
\begin{equation*}
E_{\partial _{t}^{n}u}\left( t\right) \leq C_{n}\left( 1+t\right)
^{-n-1}I_{0,n}\text{ for all }t\geq 0,
\end{equation*}%
\begin{equation*}
\left\Vert \partial _{t}^{n}u\left( t\right) \right\Vert _{L^{2}}^{2}\leq
C_{n-1}\left( 1+t\right) ^{-n}I_{0,n-1}\text{ for all }t\geq 0,\text{ }
\end{equation*}
and%
\begin{equation*}
\left\Vert \Delta \partial _{t}^{n-1}u\left( t\right) \right\Vert
_{L^{2}}^{2}\leq C_{n}\left( 1+t\right) ^{-n}I_{0,n}\text{ for all }t\geq 0.%
\text{ }
\end{equation*}%
where $C_{p}$ is a positive constant independent of the initial data and%
\begin{equation*}
I_{1,p}=\sum_{i=0}^{p}\left\Vert A^{i}\left( u_{0},u_{1}\right) \right\Vert
_{H}^{2}+\left\Vert u_{0}\right\Vert _{L^{2}}^{2},\text{ for }p\in 
\mathbb{N}
.
\end{equation*}
\end{proposition}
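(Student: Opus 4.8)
The plan is to exploit that the damped wave operator in $\left( \ref{system}\right) $ has time-independent coefficients, so that each time derivative $v_{k}:=\partial _{t}^{k}u$ again solves $\left( \ref{system}\right) $, now with Cauchy data $A^{k}\left( u_{0},u_{1}\right) $. The hypotheses $\left( u_{0},u_{1}\right) \in D\left( A^{n}\right) $ and $u_{0}\in L^{2}\left( \Omega \right) $ guarantee, for $k\leq n$, that $v_{k}$ has finite energy and that $v_{k}\left( 0\right) \in H_{0}^{1}\left( \Omega \right) $, $v_{k+1}\left( 0\right) \in L^{2}\left( \Omega \right) $, so Theorem 1 applies to every $v_{k}$. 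Since the system is autonomous, I would first recast Theorem 1 in a form that can be restarted at an arbitrary time $S\geq 0$: for any finite-energy solution $w$ and any $t\geq S$,
\begin{equation*}
E_{w}\left( t\right) \leq C_{0}\left( 1+t-S\right) ^{-1}\left( \left\Vert w\left( S\right) \right\Vert _{L^{2}}^{2}+2E_{w}\left( S\right) \right) ,
\end{equation*}
which is Theorem 1 for the solution with data $\left( w\left( S\right) ,\partial _{t}w\left( S\right) \right) $, using $\left\Vert w\left( S\right) \right\Vert _{H^{1}}^{2}+\left\Vert \partial _{t}w\left( S\right) \right\Vert _{L^{2}}^{2}=\left\Vert w\left( S\right) \right\Vert _{L^{2}}^{2}+2E_{w}\left( S\right) $.

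The three estimates are then proved by induction on $n$, the case $n=0$ being Theorem 1. The structural fact I would use is that the $L^{2}$ norm of a time derivative is controlled by the energy one level below,
\begin{equation*}
\left\Vert v_{n}\left( t\right) \right\Vert _{L^{2}}^{2}=\left\Vert \partial _{t}v_{n-1}\left( t\right) \right\Vert _{L^{2}}^{2}\leq 2E_{v_{n-1}}\left( t\right) .
\end{equation*}
Granting the energy estimate at level $n-1$, namely $E_{v_{n-1}}\left( t\right) \leq C_{n-1}\left( 1+t\right) ^{-n}I_{0,n-1}$, this at once gives the second estimate $\left\Vert \partial _{t}^{n}u\left( t\right) \right\Vert _{L^{2}}^{2}\leq C_{n-1}\left( 1+t\right) ^{-n}I_{0,n-1}$ at level $n$.

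For the energy estimate I would run an inner bootstrap raising the decay rate of $E_{v_{n}}$ one power at a time. Applying Theorem 1 to $v_{n}$ from time $0$ gives the starting bound $E_{v_{n}}\left( t\right) \leq M\left( 1+t\right) ^{-1}$ with $M\leq CI_{0,n}$. Assuming $E_{v_{n}}\left( t\right) \leq M\left( 1+t\right) ^{-\alpha }$ for some $1\leq \alpha \leq n$, restarting at $S=t/2$ and inserting the decay of $\left\Vert v_{n}\left( t/2\right) \right\Vert _{L^{2}}^{2}$ already obtained yields
\begin{equation*}
E_{v_{n}}\left( t\right) \leq C_{0}\left( 1+\tfrac{t}{2}\right) ^{-1}\left( 2C_{n-1}\left( 1+\tfrac{t}{2}\right) ^{-n}I_{0,n-1}+2M\left( 1+\tfrac{t}{2}\right) ^{-\alpha }\right) \leq C^{\prime }\left( 1+t\right) ^{-1-\min \left( n,\alpha \right) }.
\end{equation*}
Thus the rate improves from $\alpha $ to $1+\min \left( n,\alpha \right) $; iterating from $\alpha =1$ the exponent climbs $1,2,\dots ,n$ and saturates at $n+1$, giving $E_{\partial _{t}^{n}u}\left( t\right) \leq C_{n}\left( 1+t\right) ^{-n-1}I_{0,n}$ after finitely many steps. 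This self-improvement is where the real work lies: one must verify that the restart inequality is legitimate at each level (finite energy and $H_{0}^{1}$ regularity of $v_{n}\left( S\right) $ for all $S$, which is exactly where $u_{0}\in L^{2}$ and $\left( u_{0},u_{1}\right) \in D\left( A^{n}\right) $ enter) and that the constants accumulated over the iteration stay bounded by a multiple of $I_{0,n}$.

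Finally, the third estimate follows algebraically from the equation. Differentiating $\left( \ref{system}\right) $ $\left( n-1\right) $ times in $t$ and solving for the Laplacian gives, since $a=a\left( x\right) $ is time independent,
\begin{equation*}
\Delta \partial _{t}^{n-1}u=\Delta v_{n-1}=v_{n+1}+a\,v_{n},
\end{equation*}
so that $\left\Vert \Delta \partial _{t}^{n-1}u\left( t\right) \right\Vert _{L^{2}}^{2}\leq 2\left\Vert v_{n+1}\left( t\right) \right\Vert _{L^{2}}^{2}+2\left\Vert a\right\Vert _{L^{\infty }}^{2}\left\Vert v_{n}\left( t\right) \right\Vert _{L^{2}}^{2}$. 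Bounding $\left\Vert v_{n+1}\right\Vert _{L^{2}}^{2}\leq 2E_{v_{n}}$ by the first estimate (rate $\left( 1+t\right) ^{-n-1}$) and $\left\Vert v_{n}\right\Vert _{L^{2}}^{2}$ by the second (rate $\left( 1+t\right) ^{-n}$), the slower term dominates and yields $\left\Vert \Delta \partial _{t}^{n-1}u\left( t\right) \right\Vert _{L^{2}}^{2}\leq C_{n}\left( 1+t\right) ^{-n}I_{0,n}$, as claimed.
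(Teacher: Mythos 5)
Your argument is correct, but it reaches the key energy estimate by a genuinely different route than the paper. The paper does not restart Theorem 1; instead it proves, by induction on $p$, the weighted integral bound $\int_{0}^{+\infty }\left( 1+s\right) ^{p}E_{\partial _{t}^{p}u}\left( s\right) ds\leq C_{p}I_{0,p}$. The inductive step multiplies the difference inequality $X\left( t+T\right) -X\left( t\right) +\frac{1}{2}\int_{t}^{t+T}E_{u_{p+1}}\left( s\right) ds\leq 0$ (the functional $X$ from the proof of Theorem 1, now built on $u_{p+1}=\partial _{t}^{p+1}u$) by $\left( 1+t+T\right) ^{p+1}$, telescopes over the intervals $\left[ iT,\left( i+1\right) T\right] $, and controls the resulting sum $\sum \left( 1+iT\right) ^{p}X\left( iT\right) $ by the induction hypothesis via $X\left( t\right) \leq \frac{3}{2}\left\Vert a\right\Vert _{\infty }E_{u_{p}}\left( t\right) +\left( k+\frac{2}{\epsilon _{0}}\right) E_{u_{p+1}}\left( t\right) $; the pointwise decay then follows from $\left( 1+t\right) ^{n+1}E_{u_{n}}\left( t\right) \leq E_{u_{n}}\left( 0\right) +\left( n+1\right) \int_{0}^{t}\left( 1+s\right) ^{n}E_{u_{n}}\left( s\right) ds$. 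Your scheme instead uses Theorem 1 as a black box through the autonomous restart $E_{w}\left( t\right) \leq C_{0}\left( 1+t-S\right) ^{-1}\left( \left\Vert w\left( S\right) \right\Vert _{L^{2}}^{2}+2E_{w}\left( S\right) \right) $ at $S=t/2$, feeding in the decay of $\left\Vert v_{n}\left( t/2\right) \right\Vert _{L^{2}}^{2}\leq 2E_{v_{n-1}}\left( t/2\right) $ and iterating to raise the exponent from $1$ to $n+1$; the needed regularity of the restarted data and the boundedness of the accumulated constants (only $n$ iterations, each multiplying by a fixed factor) check out, so the argument closes. What you lose relative to the paper is the intermediate integral bounds $\int_{0}^{+\infty }\left( 1+s\right) ^{n}E_{u_{n}}\left( s\right) ds\leq C_{n}I_{0,n}$, which the paper reuses in the proof of Proposition 2; what you gain is modularity, since you never reopen the dissipation functional $X$. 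Your treatment of the second and third estimates ($\left\Vert \partial _{t}^{n}u\right\Vert _{L^{2}}^{2}\leq 2E_{\partial _{t}^{n-1}u}$ and $\Delta \partial _{t}^{n-1}u=\partial _{t}^{n+1}u+a\partial _{t}^{n}u$) coincides with the paper's.
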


In the sequel, we use 
\begin{equation*}
d\left( x\right) =\left\{ 
\begin{array}{lc}
\left\vert x\right\vert & d\geq 3, \\ 
\left\vert x\right\vert \ln \left( B\left\vert x\right\vert \right) & d=2,%
\end{array}%
\right.
\end{equation*}%
with $B\underset{x\in \Omega }{\inf }\left\vert x\right\vert \geq 2$.

\begin{theorem}
We assume that Hyp A holds and $(\omega $,$T)$ geometrically controls $%
\Omega $. Then there exists $C_{2}>0$ such that the following estimates%
\begin{equation*}
E_{u}\left( t\right) \leq C_{2}\left( 1+t\right) ^{-2}I_{1}\text{ and }%
\left\Vert u\left( t\right) \right\Vert _{L^{2}}^{2}\leq C_{2}\left(
1+t\right) ^{-1}I_{1}\text{ for all }t\geq 0,
\end{equation*}%
hold for every solution $u$ of $\left( \ref{system}\right) $ with initial
data $\left( u_{0},u_{1}\right) $ in $H_{0}^{1}\left( \Omega \right) \times
L^{2}\left( \Omega \right) $ which satisfies%
\begin{equation*}
\left\Vert d\left( \cdot \right) \left( u_{1}+au_{0}\right) \right\Vert
_{L^{2}}<+\infty ,
\end{equation*}%
where%
\begin{equation*}
I_{1}=\left\Vert u_{0}\right\Vert _{H^{1}}^{2}+\left\Vert u_{1}\right\Vert
_{L^{2}}^{2}+\left\Vert d\left( \cdot \right) \left( u_{1}+au_{0}\right)
\right\Vert _{L^{2}}^{2}.
\end{equation*}
\end{theorem}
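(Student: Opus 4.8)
The quadratic energy rate follows from the improved $L^{2}$ rate $\|u(t)\|_{L^2}^{2}\le C(1+t)^{-1}I_1$ by a short argument, which I give first; the $L^2$ rate itself is the crux and is where the weighted datum enters. Testing \eqref{system} against $u$ and setting $X(t)=\int_\Omega\bigl(\partial_t u\,u+\tfrac12 a u^{2}\bigr)\,dx$ gives $E_u(t)=\int_\Omega|\partial_t u|^{2}\,dx-\tfrac12 X'(t)$; integrating on $[t,\infty)$ and using that both $E_u(s)$ and $\|u(s)\|_{L^2}$ tend to $0$, so that $X(\infty)=0$,
\[
\int_t^\infty E_u(s)\,ds=\int_t^\infty\!\!\int_\Omega|\partial_t u|^{2}\,dx\,ds+\tfrac12 X(t).
\]
The tail of the energy identity \eqref{energy inequality} gives $\int_t^\infty\int_\Omega a|\partial_t u|^{2}=E_u(t)$, and the Geometric Control Condition together with $a\ge\epsilon_0$ on $\{|x|\ge L\}$ upgrades this (up to a harmless time shift by the control window) to $\int_t^\infty\int_\Omega|\partial_t u|^{2}\le C E_u(t)\le C(1+t)^{-1}I_0$ by Theorem 1; while $|X(t)|\le\|u\|_{L^2}\|\partial_t u\|_{L^2}+\tfrac12\|a\|_\infty\|u\|_{L^2}^{2}\le C(1+t)^{-1}I_1$ once the $L^2$ rate is in hand. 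Thus $\int_t^\infty E_u\le C(1+t)^{-1}I_1$, and since \eqref{energy inequality} shows $E_u$ is non-increasing, $E_u(t)\tfrac{t}{2}\le\int_{t/2}^{t}E_u\le\int_{t/2}^\infty E_u\le C(1+t)^{-1}I_1$, i.e. $E_u(t)\le C_2(1+t)^{-2}I_1$, the range $0\le t\le1$ being covered by Theorem 1. I stress that this route produces no logarithmic loss, whereas the cruder multiplication of \eqref{system} by $(1+t)^{2}\partial_t u$, which would reduce matters to $\int_0^\infty(1+t)E_u\,dt$, does lose a logarithm.

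The heart of the proof is therefore the weighted $L^2$ estimate $\|u(t)\|_{L^2}^{2}\le C(1+t)^{-1}I_1$, and this is where the hypothesis $\|d(\cdot)(u_1+au_0)\|_{L^2}<+\infty$ is decisive. Introduce $z=\partial_t u+au$; then \eqref{system} is equivalent to $\partial_t z=\Delta u$ with $z(0)=u_1+au_0$, and a direct computation yields
\[
\tfrac{d}{dt}\|u\|_{L^2}^{2}+2\int_\Omega a\,u^{2}\,dx=2\int_\Omega u\,z\,dx .
\]
I would control the right-hand side by writing $\int_\Omega u z=\int_\Omega (u/d)\,(d\,z)$ and invoking the Hardy inequality $\|v/d\|_{L^2}\le C\|\nabla v\|_{L^2}$, which holds on $\Omega$ exactly for the stated weight --- $d(x)=|x|$ when $d\ge 3$ and $d(x)=|x|\ln(B|x|)$ when $d=2$, the logarithm being forced by the failure of the plain Hardy inequality in the plane --- so that $|\int_\Omega u z|\le C\,E_u(t)^{1/2}\|d\,z(t)\|_{L^2}$. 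The weighted datum then enters through a companion weighted estimate for $z$: testing $\partial_t z=\Delta u$ against a $d(x)^{2}$-weighted multiplier propagates $\|d\,z(0)\|_{L^2}=\|d(u_1+au_0)\|_{L^2}$ forward in time, the commutator terms generated by $\nabla(d^{2})$ being reabsorbed, again via the Hardy inequality, into the energy already controlled by Theorem 1. Feeding this back, using the coercivity of $\int_\Omega a\,u^{2}$ on $\{|x|\ge L\}$ and the Geometric Control Condition to absorb the bounded region $\{|x|<L\}$, and integrating against the weight $(1+t)$, should produce $(1+t)\|u(t)\|_{L^2}^{2}\le C I_1$.

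The step I expect to be the main obstacle is precisely this weighted estimate and the coercivity needed to turn the displayed identity for $\|u\|_{L^2}^{2}$ into genuine decay: the term $\int_\Omega a\,u^{2}$ is coercive only on $\{|x|\ge L\}$, so on the bounded region one must borrow decay from the Geometric Control Condition, which controls $\partial_t u$ rather than $u$ itself, and the matching across $\{|x|=L\}$ by cutoffs must be carried out without losing powers of $(1+t)$. The genuinely delicate case is $d=2$, where the Hardy weight carries the logarithm $\ln(B|x|)$ and the error terms coming from $\nabla(d^{2})$ only marginally embed into $\int_\Omega|\nabla u|^{2}$; ensuring that they are absorbed uniformly in $t$ and with no logarithmic loss, while simultaneously reconciling the weighted control at spatial infinity with the purely interior, geometric observability furnished by the Geometric Control Condition, is the part of the argument that I anticipate will require the most care.
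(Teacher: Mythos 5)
Your opening reduction (integrating $E_u=\int_\Omega|\partial_t u|^2\,dx-\tfrac12 X'(t)$ over $[t,\infty)$ to convert an $L^2$ decay rate into the quadratic energy rate) is sound in outline, but the crux of the theorem --- the bound $\|u(t)\|_{L^2}^2\le C(1+t)^{-1}I_1$ --- is exactly the part you leave as a sketch, and the sketch has two concrete gaps. First, your proposed propagation of $\|d\,z(t)\|_{L^2}$ by testing $\partial_t z=\Delta u$ against $d^2 z$ produces, after integration by parts, the term $\int_\Omega d^2\,\nabla z\cdot\nabla u\,dx$ whose leading part is $\tfrac12\tfrac{d}{dt}\int_\Omega d^2|\nabla u|^2\,dx$; controlling it requires a weighted bound on $\nabla u$ (hence on $\nabla u_0$) that is \emph{not} contained in $I_1$, which weights only $u_1+au_0$. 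Second, even granting $\|d\,z(t)\|_{L^2}\le C$, your differential inequality reads $\tfrac{d}{dt}\|u\|_{L^2}^2+c\|u\|_{L^2}^2\le C E_u(t)^{1/2}\|d\,z(t)\|_{L^2}+CE_u(t)\lesssim (1+t)^{-1/2}I_1^{1/2}$ (using only Theorem 1 for $E_u$), and Gronwall then yields $\|u(t)\|_{L^2}^2\lesssim (1+t)^{-1/2}$, not $(1+t)^{-1}$; bootstrapping against your paragraph-one reduction approaches but never reaches the endpoint exponent, so the scheme does not close as written.

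The paper proceeds differently and avoids both problems. It invokes Ikehata's Lemma 2.5 as a black box to obtain the \emph{space-time} bound $\|u(t)\|_{L^2}^2+\int_0^t\int_\Omega a|u|^2\,dx\,ds\le C\bigl(\|u_0\|_{L^2}^2+\|d(\cdot)(u_1+au_0)\|_{L^2}^2\bigr)$ (a bound, not a pointwise rate), deduces $\int_0^\infty\int_\Omega|u|^2\,dx\,ds\le CI_1$, and feeds this into a $(1+t)$-weighted version of the Theorem 1 machinery: the functional $(1+t)X(t)$ with $X(t)=\int_\Omega v\,\partial_t v\,dx+\tfrac12\int_\Omega a|v|^2\,dx+kE_u(t)$, $v=(1-\psi)u$, together with a $(1+s)$-weighted observability inequality, gives simultaneously $\sup_t(1+t)X(t)\le CI_1$ (hence the $L^2$ rate) and $\int_0^\infty(1+s)E_u(s)\,ds\le CI_1$ (hence $(1+t)^2E_u(t)\le E_u(0)+2\int_0^\infty(1+s)E_u\,ds\le CI_1$). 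In particular your claim that reducing matters to $\int_0^\infty(1+t)E_u\,dt$ ``loses a logarithm'' is mistaken: a logarithm is lost only if one tries to infer that integral from a pointwise decay rate, whereas the paper bounds it directly from the weighted differential inequality.
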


As a corollary we have:

\begin{proposition}
Let $n\in 
\mathbb{N}
^{\ast }$. We assume that Hyp A holds and $(\omega $,$T)$ geometrically
controls $\Omega $. Let $\left( u_{0},u_{1}\right) $ in $D\left(
A^{n}\right) $, such that $u_{0}\in L^{2}\left( \Omega \right) $ and 
\begin{equation*}
\left\Vert d\left( \cdot \right) \left( u_{1}+au_{0}\right) \right\Vert
_{L^{2}}<+\infty 
\end{equation*}%
Then the solution $u$ of $\left( \ref{system}\right) $ satisfies 
\begin{equation*}
E_{\partial _{t}^{n}u}\left( t\right) \leq C_{n}\left( 1+t\right)
^{-n-2}I_{1,n},\text{ for all }t\geq 0,
\end{equation*}%
\begin{equation*}
\left\Vert \partial _{t}^{n}u\left( t\right) \right\Vert _{L^{2}}^{2}\leq
C_{n-1}\left( 1+t\right) ^{-n-1}I_{1,n-1},\text{ for all }t\geq 0,\text{ }
\end{equation*}%
and 
\begin{equation*}
\left\Vert \Delta \partial _{t}^{n-1}u\left( t\right) \right\Vert
_{L^{2}}^{2}\leq C_{n+1}\left( 1+t\right) ^{-n-1}I_{1,n},\text{ for all }%
t\geq 0.
\end{equation*}%
where $C_{p}$ is a positive constant independent of the initial data and%
\begin{equation*}
I_{1,p}=\sum_{i=0}^{p}\left\Vert A^{i}\left( u_{0},u_{1}\right) \right\Vert
_{H}^{2}+\left\Vert u_{0}\right\Vert _{L^{2}}^{2}+\left\Vert d\left( \cdot
\right) \left( u_{1}+au_{0}\right) \right\Vert _{L^{2}}^{2},\text{ for }p\in 
\mathbb{N}
.
\end{equation*}
\end{proposition}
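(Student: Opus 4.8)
The plan is to deduce all three estimates from the single energy bound $E_{\partial _t^n u}(t)\le C_n(1+t)^{-n-2}I_{1,n}$, which carries the real content; the other two are then essentially algebraic consequences. First I would record the elementary facts that $\left\Vert \partial _t^n u(t)\right\Vert _{L^2}^2\le 2E_{\partial _t^{n-1}u}(t)$ and, from differentiating $(\ref{system})$ in time, $\Delta \partial _t^{n-1}u=\partial _t^{n+1}u+a\,\partial _t^n u$. Granting the energy bound at every level, the first of these gives $\left\Vert \partial _t^n u(t)\right\Vert _{L^2}^2\le 2E_{\partial _t^{n-1}u}(t)\le C_{n-1}(1+t)^{-n-1}I_{1,n-1}$, which is the second estimate, and the second gives $\left\Vert \Delta \partial _t^{n-1}u(t)\right\Vert _{L^2}^2\le 2\left\Vert \partial _t^{n+1}u\right\Vert _{L^2}^2+2\left\Vert a\right\Vert _{L^{\infty}}^2\left\Vert \partial _t^n u\right\Vert _{L^2}^2\le C(1+t)^{-n-2}+C(1+t)^{-n-1}\le C_{n+1}(1+t)^{-n-1}I_{1,n}$, which is the third.

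The energy bound itself I would prove by induction on $n$. The starting point is that $\partial _t^n u$ is again a solution of $(\ref{system})$: since the generator $A$ is time independent, $e^{tA}$ commutes with $A$, so $(\partial _t^n u,\partial _t^{n+1}u)(t)=e^{tA}A^n(u_0,u_1)$, and hence $\partial _t^n u$ is the displacement component of the solution issued from the data $A^n(u_0,u_1)\in H$ (using $(u_0,u_1)\in D(A^n)$). In particular $E_{\partial _t^n u}(t)=\tfrac12\left\Vert e^{tA}A^n(u_0,u_1)\right\Vert _H^2$, and $E_{\partial _t^n u}(0)$ is controlled by the term $\sum_{i\le n}\left\Vert A^i(u_0,u_1)\right\Vert _H^2$ appearing in $I_{1,n}$. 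The base case $n=0$ is exactly Theorem 2.

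For the inductive step I would re-run the decay machinery behind Theorem 2, now applied to the solution $w=\partial _t^n u$. That machinery combines the observability coming from the Geometric Control Condition over a time window of length $T$ with the dissipation identity $(\ref{energy inequality})$ to produce a reduction inequality of the form $E_w((m+1)T)\le \theta E_w(mT)+C\cdot(\text{lower order})$ with $\theta <1$, in which the lower-order term is an $L^2$-type quantity controlled by $\left\Vert w\right\Vert _{L^2}^2=\left\Vert \partial _t^n u\right\Vert _{L^2}^2$. The crucial point is that, by the induction hypothesis, this term is not merely bounded but already decays one power faster than the energy we are estimating, namely $\left\Vert \partial _t^n u\right\Vert _{L^2}^2\le 2E_{\partial _t^{n-1}u}(t)\le C(1+t)^{-n-1}I_{1,n-1}$. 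Feeding this faster-decaying forcing into the iteration upgrades the rate $(1+t)^{-n-1}$ of the lower-order term to $(1+t)^{-n-2}$ for $E_{\partial _t^n u}$, which is precisely the gain of one power that separates the energy from the $L^2$ norm throughout the paper.

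I expect the main obstacle to be organizing the induction so that only the energy and $L^2$ bounds of lower-order time derivatives enter, and never the weighted hypothesis applied to a derivative. This matters because the weighted condition does not propagate: the data of $\partial _t u$ is $A(u_0,u_1)=(u_1,\Delta u_0-au_1)$, whose associated weighted quantity is $d(\cdot)\,\Delta u_0$, which is not assumed finite. Hence one cannot simply invoke Theorem 2 for $\partial _t^n u$; the improved rate must come purely from the bootstrap, with the single weighted hypothesis on $(u_0,u_1)$ entering only through the base case to start the tower via $\left\Vert u(t)\right\Vert _{L^2}^2\le C(1+t)^{-1}I_1$. The remaining work is bookkeeping: verifying that $A^n(u_0,u_1)$ is admissible (using $(u_0,u_1)\in D(A^n)$ and $u_0\in L^2$, so that the relevant $L^2$ norms are finite) and that the constants and the data functional $I_{1,n}$ accumulate correctly at each step.
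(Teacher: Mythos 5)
Your reductions of the second and third estimates to the energy bound are exactly the paper's (via $\left\Vert \partial _{t}^{n}u\right\Vert _{L^{2}}^{2}\leq 2E_{\partial _{t}^{n-1}u}$ and the identity $\Delta \partial _{t}^{n-1}u=\partial _{t}^{n+1}u+a\partial _{t}^{n}u$), and your observation that the weighted hypothesis does not propagate to $A\left( u_{0},u_{1}\right) $, so that it can only enter through the base case, is correct and is indeed how the paper organizes the argument. The gap is in the inductive step for the energy bound itself. An iteration of the form $E_{w}\left( \left( m+1\right) T\right) \leq \theta E_{w}\left( mT\right) +C\epsilon _{m}$ with $\theta <1$ and forcing $\epsilon _{m}\lesssim \left( 1+mT\right) ^{-n-1}$ yields $E_{w}\left( mT\right) \lesssim \theta ^{m}E_{w}\left( 0\right) +\sum_{j\leq m}\theta ^{m-j}\left( 1+jT\right) ^{-n-1}\lesssim \left( 1+mT\right) ^{-n-1}$: the solution of such a recursion decays at the rate of the forcing, not one power faster. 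So "feeding the faster-decaying lower-order term into the iteration" leaves you one power short of the target $\left( 1+t\right) ^{-n-2}$; the claimed upgrade from $-n-1$ to $-n-2$ does not follow from the mechanism you describe.

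The paper gains the missing power differently. Writing $u_{n}=\partial _{t}^{n}u$, its master inequality is $X\left( t+T\right) -X\left( t\right) +\frac{1}{2}\int_{t}^{t+T}E_{u_{n}}\left( s\right) ds\leq 0$ for a nonnegative functional $X\lesssim E_{u_{n-1}}+E_{u_{n}}$: the increment of $X$ absorbs the full windowed \emph{integral} of the energy, not merely a fixed fraction of $E$ at the endpoints. Multiplying by $\left( 1+t+T\right) ^{n+1}$ and telescoping, the error $\sum_{m}\left( 1+mT\right) ^{n}X\left( mT\right) $ is controlled by $\int_{0}^{+\infty }\left( 1+s\right) ^{n}\left( E_{u_{n-1}}+E_{u_{n}}\right) ds$, which is finite by induction; note that the quantity propagated through the induction is the weighted integral $\int_{0}^{+\infty }\left( 1+s\right) ^{p}E_{u_{p}}\left( s\right) ds<+\infty $ (seeded by $\int_{0}^{+\infty }\left( 1+s\right) E_{u}\left( s\right) ds\leq CI_{1}$ from Theorem 2 together with the unweighted-data bounds of Proposition 1), not the pointwise bound: your pointwise hypothesis $E_{u_{n-1}}\left( mT\right) \lesssim \left( 1+mT\right) ^{-n-1}$ would make that sum logarithmically divergent. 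This produces $\int_{0}^{+\infty }\left( 1+s\right) ^{n+1}E_{u_{n}}\left( s\right) ds\leq C_{n}I_{1,n}$, and only then does $\left( 1+t\right) ^{n+2}E_{u_{n}}\left( t\right) \leq E_{u_{n}}\left( 0\right) +\left( n+2\right) \int_{0}^{t}\left( 1+s\right) ^{n+1}E_{u_{n}}\left( s\right) ds$ convert integrability into the pointwise rate $\left( 1+t\right) ^{-n-2}$. You would need to restructure your induction around these weighted integrals to close the argument.
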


\section{Proof of Theorem 1}

In order to prove theorem 1 we need some preliminary results.

\begin{proposition}
We assume that Hyp A holds and $(\omega $,$T)$ geometrically controls $%
\Omega $. Let $M=\Omega \cap B_{2L}$. Setting 
\begin{equation*}
\omega _{1}=\omega \cap B_{2L}=\left\{ x\in \Omega \cap B_{2L};a\left(
x\right) >\epsilon _{0}>0\right\} .
\end{equation*}%
Then $\left( \omega _{1},T\right) $ geometrically controls $M$. Therefore
there exists a positive constant $C_{T}^{1}$, such that the following
estimate%
\begin{equation}
E_{w}\left( t\right) \leq C_{T}^{1}\left( \int_{t}^{t+T}\int_{M}a\left\vert
\partial _{t}w\right\vert ^{2}+\left\vert f\left( s,x\right) \right\vert
^{2}dxds\right) ,  \label{observability bounded}
\end{equation}%
holds for every $t\geq 0$, for every solution $w$ of 
\begin{equation}
\left\{ 
\begin{array}{ll}
\partial _{t}^{2}w-\Delta w+a\left( x\right) \partial _{t}w=f\left(
t,x\right) & 
\mathbb{R}
_{+}\times M, \\ 
w=0 & 
\mathbb{R}
_{+}\times \partial M, \\ 
\left( w\left( 0\right) ,\partial _{t}w\left( 0\right) \right) =\left(
w_{0},w_{1}\right) , & 
\end{array}%
\right.  \label{sys:nonlinear}
\end{equation}%
with initial data in the energy space $H_{0}^{1}\left( M\right) \times
L^{2}\left( M\right) $, and for every $f$ in $L_{loc}^{2}\left( 
\mathbb{R}
_{+},L^{2}\left( M\right) \right) $.
\end{proposition}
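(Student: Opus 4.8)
The plan is to reduce to the case $t=0$ and then split the argument into a geometric part, where I verify the Geometric Control Condition for the truncated domain $M$, and an analytic part, where I deduce the observability estimate from it via the theorem of Bardos--Lebeau--Rauch \cite{blr}. Since the system $\left(\ref{sys:nonlinear}\right)$ is autonomous, for a fixed $t\geq 0$ I would set $\tilde w(s,x)=w(t+s,x)$ and $\tilde f(s,x)=f(t+s,x)$, so that $\tilde w$ solves the same system on $\left[0,T\right]$ with data $\left(w(t),\partial_t w(t)\right)$. It therefore suffices to prove
\begin{equation*}
E_w(0)\leq C_T^1\left(\int_0^T\int_M a\left\vert\partial_t w\right\vert^2+\left\vert f(s,x)\right\vert^2\,dx\,ds\right),
\end{equation*}
and the general statement follows by replacing $w$ with $\tilde w$.

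For the geometric step, I would first record that, by Hyp A, the annulus $\left\{x\in\Omega:\ L\leq\left\vert x\right\vert<2L\right\}$ is contained in $\omega_1$, since there $a(x)>\epsilon_0$. The decisive observation is that any generalized geodesic of $M$ reaching a neighbourhood of the outer boundary $\partial B_{2L}$ must first cross this annulus, hence is already inside $\omega_1$ before it can reflect off $\partial B_{2L}$. Consequently, up to its first entry into $\omega_1$, a generalized geodesic of $M$ issued from a point of $B_L\cap\Omega$ coincides with the generalized geodesic of $\Omega$ issued from the same point and codirection: inside $B_L$ the two domains share the only relevant boundary $\Gamma$, on which reflections are identical, and no reflection off $\partial B_{2L}$ can occur before $\omega_1$ is reached. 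By the Geometric Control Condition for $\Omega$, that $\Omega$-geodesic enters $\omega$ before time $T$; since every point with $\left\vert x\right\vert\geq L$ already lies in $\omega$, the first entry into $\omega$ occurs at a point with $\left\vert x\right\vert\leq L$, which therefore lies in $B_{2L}$, hence in $\omega_1$, and is reached identically by the $M$-geodesic. Geodesics issued from the annulus itself already lie in $\omega_1$ at $t=0$. This shows that $\left(\omega_1,T\right)$ geometrically controls $M$. (Enlarging $L$ if necessary so that $O\subset B_L$, the boundary $\partial M=\Gamma\sqcup\partial B_{2L}$ is a disjoint union of smooth hypersurfaces, so the generalized bicharacteristic flow is well defined and the above matching is legitimate.)

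For the analytic step, I would invoke \cite{blr}: since $\left(\omega_1,T\right)$ controls $M$, there is $C>0$ such that every solution $v$ of the free equation $\partial_t^2 v-\Delta v=0$ on $M$ with homogeneous Dirichlet data satisfies $E_v(0)\leq C\int_0^T\int_{\omega_1}\left\vert\partial_t v\right\vert^2\,dx\,dt$. To absorb the damping and the source, I would write $w=v+z$, where $v$ solves the free equation with $\left(v(0),\partial_t v(0)\right)=\left(w_0,w_1\right)$ and $z$ solves $\partial_t^2 z-\Delta z=f-a\partial_t w=:g$ with zero Cauchy data, so that $E_w(0)=E_v(0)$. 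The standard energy identity for the inhomogeneous equation gives $\sup_{[0,T]}E_z\leq C\,T\left\Vert g\right\Vert_{L^2([0,T]\times M)}^2$. Applying the observability inequality to $v=w-z$ and using $\left\vert\partial_t v\right\vert^2\leq 2\left\vert\partial_t w\right\vert^2+2\left\vert\partial_t z\right\vert^2$ yields
\begin{equation*}
E_w(0)\leq 2C\int_0^T\int_{\omega_1}\left\vert\partial_t w\right\vert^2\,dx\,dt+2C\int_0^T\int_{\omega_1}\left\vert\partial_t z\right\vert^2\,dx\,dt.
\end{equation*}
On $\omega_1$ one has $a>\epsilon_0$, so the first term is at most $\frac{2C}{\epsilon_0}\int_0^T\int_M a\left\vert\partial_t w\right\vert^2$; the second is at most $4CT\sup_{[0,T]}E_z$, hence bounded by $CT^2\left\Vert g\right\Vert_{L^2}^2$. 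Finally $\left\Vert g\right\Vert_{L^2}^2\leq 2\left\Vert f\right\Vert_{L^2}^2+2\left\Vert a\right\Vert_\infty\int_0^T\int_M a\left\vert\partial_t w\right\vert^2$, and collecting terms gives the claimed estimate.

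The main obstacle is the geometric step: transferring the Geometric Control Condition from $\Omega$ to $M$, because generalized geodesics of $M$ may reflect off the artificial boundary $\partial B_{2L}$, a phenomenon absent in $\Omega$. The resolution above is that Hyp A places a full neighbourhood of $\partial B_{2L}$ inside $\omega_1$, so every geodesic is captured by $\omega_1$ before any such spurious reflection occurs; this is precisely where the positivity of $a$ at infinity enters. The remaining analytic steps are a routine combination of the Bardos--Lebeau--Rauch inequality with elementary energy estimates.
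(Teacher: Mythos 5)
Your proof is correct, and its geometric core is the same as the paper's: the dichotomy between a geodesic that stays in $B_{L}$ up to time $T$ (where it coincides with an $\Omega$-geodesic and is caught by the Geometric Control Condition for $\Omega$, landing in $\omega \cap B_{L}\subset \omega _{1}$) and one that reaches $\left\{ L\leq \left\vert x\right\vert \right\} $, which by Hyp A is already inside $\omega _{1}$ before any reflection off the artificial boundary $\partial B_{2L}$ can occur. You spell out the identification of the generalized bicharacteristic flows of $M$ and $\Omega$ up to the first entry into $\omega _{1}$ more carefully than the paper does, which is a genuine improvement in rigor. The only real divergence is in the analytic half: the paper simply invokes \cite[proposition 3]{daou} to pass from geometric control of $M$ to the observability estimate $\left( \ref{observability bounded}\right) $ for the damped, forced equation, whereas you rederive it from the Bardos--Lebeau--Rauch inequality for the free wave equation via the decomposition $w=v+z$ with $g=f-a\partial _{t}w$, the Duhamel energy bound $\sup_{[0,T]}E_{z}\leq CT\left\Vert g\right\Vert _{L^{2}}^{2}$, and absorption of $\left\vert \partial _{t}w\right\vert ^{2}$ on $\omega _{1}$ by $\frac{1}{\epsilon _{0}}a\left\vert \partial _{t}w\right\vert ^{2}$. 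Your version is self-contained and makes transparent where each hypothesis enters; the paper's is shorter but opaque without the reference. Both are valid, and your time-translation reduction to $t=0$ is the implicit reason the constant $C_{T}^{1}$ is uniform in $t$ in the paper as well.
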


\begin{proof}
We remind that 
\begin{equation*}
a\left( x\right) >\epsilon _{0}>0\text{ for }\left\vert x\right\vert \geq L.
\end{equation*}%
Let 
\begin{equation*}
\omega _{1}=\left\{ x\in \Omega \cap B_{2L};a\left( x\right) >\epsilon
_{0}>0\right\} .
\end{equation*}%
To prove that $\left( \omega _{1},T\right) $ geometrically controls $M$, we
have only to show that every geodesic starting from $B_{L}$ enters the set $%
\omega _{1}$ in a time $t<T$. Let $\gamma $ a geodesic starting from $B_{L}$%
, then we have the following two cases

\begin{itemize}
\item $\gamma $ stays in $B_{L}$ for every $t\in \left[ 0,T\right[ $. Since $%
\left( \omega =\left\{ x\in \Omega ;a\left( x\right) >\epsilon
_{0}>0\right\} ,T\right) $ geometrically controls $\Omega $, we infer that $%
\gamma $ enters the set $\left\{ x\in \Omega \cap B_{L};a\left( x\right)
>\epsilon _{0}>0\right\} \subset $ $\omega _{1}$.

\item $\gamma $ leaves the ball $B_{L}$ for some $t\in \left[ 0,T\right[ $.
Therefore $\gamma $ enters the set $\left\{ x\in \Omega \cap
B_{2L};\left\vert x\right\vert \geq L\right\} $, since 
\begin{equation*}
a\left( x\right) >\epsilon _{0}>0\text{ for }\left\vert x\right\vert \geq L
\end{equation*}%
we deduce that $\gamma $ enters the set $\omega _{1}$.
\end{itemize}

Therefore using \cite[proposition 3]{daou}, we obtain $\left( \ref%
{observability bounded}\right) $.
\end{proof}

\begin{proposition}
\label{proposition onservability global}We assume that Hyp A holds and $%
(\omega $,$T)$ geometrically controls $\Omega $. Let $\delta >0$ and $\chi
\in C_{0}^{\infty }\left( 
\mathbb{R}
^{d}\right) $. There exists $C_{T,\delta ,\chi }>0$, such that the following
inequality%
\begin{equation}
\int_{t}^{t+T}\int_{\Omega }\chi ^{2}\left( x\right) \left( \left\vert
\nabla u\right\vert ^{2}+\left\vert \partial _{t}u\right\vert ^{2}\right)
dxds\leq C_{T,\delta ,\chi }\left( \int_{t}^{t+T}\int_{\Omega }a\left(
x\right) \left\vert \partial _{t}u\right\vert ^{2}dxds\right) +\delta
E_{u}\left( t\right) ,  \label{observability gradient1}
\end{equation}%
holds for every $t\geq 0$ and for all $u$ solution of $\left( \ref{system}%
\right) $ with initial data $\left( u_{0},u_{1}\right) $ in $H$.
\end{proposition}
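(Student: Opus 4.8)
The plan is to localize the global solution and to bound the left-hand side by the damping term through a combination of the interior observability of Proposition 5 (on the bounded piece) with the positivity of $a$ at infinity (on the exterior piece); the term $\delta E_u(t)$ will play the role of absorbing the boundary-in-time contributions generated by the multiplier identities, using that $E_u$ is nonincreasing by $(\ref{energy inequality})$, so $E_u(s)\le E_u(t)$ for every $s\in[t,t+T]$. First I would fix a partition $1=\psi_1+\psi_2$ adapted to the covering $B_{3L/2}\cup\{|x|>L\}$, with $\mathrm{supp}\,\psi_1\subset B_{3L/2}$ and $\mathrm{supp}\,\psi_2\subset\{|x|>L\}$, and split $\chi^2=\chi^2\psi_1+\chi^2\psi_2$. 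On the exterior piece the support lies in $\{|x|\ge L\}$, where $a\ge\epsilon_0$, so the velocity part is immediate:
\[ \int_t^{t+T}\!\!\int_\Omega \chi^2\psi_2|\partial_t u|^2\,dx\,ds\le \frac{\|\chi\|_\infty^2}{\epsilon_0}\int_t^{t+T}\!\!\int_\Omega a(x)|\partial_t u|^2\,dx\,ds. \]

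For the interior piece I would apply Proposition 5 to $w=\phi u$, where $\phi\in C_0^\infty(B_{2L})$ equals $1$ on $\mathrm{supp}(\chi^2\psi_1)$ and is chosen so that $\mathrm{supp}\,\nabla\phi\subset\{|x|>L\}$. Then $w$ solves $(\ref{sys:nonlinear})$ on $M$ with Dirichlet data on $\partial M$ and source $f=-2\nabla\phi\cdot\nabla u-(\Delta\phi)u$, so $(\ref{observability bounded})$ controls the full local energy of $w$, hence $\int\chi^2\psi_1(|\nabla u|^2+|\partial_t u|^2)$, by $\int a|\partial_t w|^2+\int|f|^2$. The first term is clean since $a|\partial_t w|^2=a\phi^2|\partial_t u|^2\le\|\phi\|_\infty^2\,a|\partial_t u|^2$, while $\int|f|^2\le C\int_{\mathrm{supp}\,\nabla\phi}(|\nabla u|^2+u^2)$ lives in the damping region; thus everything reduces to controlling the gradient of $u$ on a compact subset of $\{|x|\ge L\}$.

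That exterior gradient I would extract from a multiplier identity: multiplying $(\ref{system})$ by $g^2u$, with $g$ supported in $\{|x|>L\}$ and equal to $1$ on the relevant compact set, and integrating over $[t,t+T]\times\Omega$ yields
\[ \int_t^{t+T}\!\!\int_\Omega g^2|\nabla u|^2 = \int_t^{t+T}\!\!\int_\Omega g^2|\partial_t u|^2 - 2\int_t^{t+T}\!\!\int_\Omega g u\,\nabla g\cdot\nabla u - \Big[\int_\Omega g^2 u\,\partial_t u\Big]_t^{t+T} - \tfrac12\Big[\int_\Omega g^2 a u^2\Big]_t^{t+T}. \]
The first term on the right is again $\le\epsilon_0^{-1}\|g\|_\infty^2\int a|\partial_t u|^2$; the cross term is handled by Young's inequality, absorbing $\int g^2|\nabla u|^2$ into the left-hand side; and the boundary-in-time brackets are estimated by $(\int g^2u^2)^{1/2}(E_u(\cdot))^{1/2}$ and split so that, thanks to $E_u(s)\le E_u(t)$, they produce exactly the prescribed $\delta E_u(t)$.

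The main obstacle is the family of compactly supported zeroth-order remainders $\int u^2$ (over subsets of $\{|x|\ge L\}$, both time-integrated and evaluated at $t$ and $t+T$) left over from the cross and boundary terms, since the right-hand side of the statement admits no such term. The time-integrated ones I would dispose of by writing $u(s)=u(t)+\int_t^s\partial_\tau u\,d\tau$, so that they are dominated by $\int_K u(t)^2$ plus a velocity integral controlled by the damping; the genuinely delicate point is to remove the surviving single-time terms $\int_K u^2$ with a constant compatible with the clean right-hand side. This is precisely the place where one invokes a compactness–uniqueness argument resting on unique continuation (the same mechanism underlying Proposition 5 and \cite{daou}): a failure would produce, after normalization, a nonzero limiting solution with $\partial_t u\equiv 0$ on $\omega$, which unique continuation under the Geometric Control Condition forbids. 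I also expect a harmless enlargement of the time window from $T$ to $2T$ when passing from the instantaneous form of $(\ref{observability bounded})$ to the time-integrated left-hand side, which is absorbed by the freedom in the control time.
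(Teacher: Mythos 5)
Your strategy is sound, and it reaches the estimate by a genuinely different route from the paper. The paper proves (\ref{observability gradient1}) in a single stroke by contradiction: it normalizes by the full localized energy $\lambda_n^2=\int_{t_n}^{t_n+T}\int_\Omega\chi^2(|\nabla u_n|^2+|\partial_t u_n|^2)$, compares the rescaled solutions with free waves $Z_n$, kills the weak limit by unique continuation together with the Lax--Phillips theorem, and then --- because the localized energy is \emph{not} compact relative to the energy bound --- must attach a microlocal defect measure to $(Z_n)$ and use its propagation along the geodesic flow plus the Geometric Control Condition to upgrade weak convergence to strong convergence of the local energy, contradicting the normalization. Your scheme instead treats the principal part constructively: the bounded-domain observability (\ref{observability bounded}) absorbs the interior piece, positivity of $a$ on $\{|x|\ge L\}$ absorbs the exterior velocity, and the equipartition multiplier $g^2u$ absorbs the exterior gradient, so that compactness--uniqueness is only needed for the zeroth-order, compactly supported remainders $\int_K|u|^2$. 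For those, ordinary Rellich compactness (via Aubin--Lions, using the local Poincar\'e bound $\|u(s)\|_{L^2(\Omega\cap B_R)}\le C_R\|\nabla u(s)\|_{L^2(\Omega)}$ to get the missing $H^1_{loc}$ bound) replaces the defect-measure machinery. This is the classical multiplier-plus-compactness-uniqueness scheme: it buys a more elementary compactness step at the price of the multiplier bookkeeping, and it leans more heavily on the already established bounded-domain estimate.

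Two points need care before the argument closes. First, the enlargement of the time window to $2T$ is not harmless as stated, since the right-hand side of (\ref{observability gradient1}) carries the window $[t,t+T]$; it is, however, avoidable: apply (\ref{observability bounded}) only at the left endpoint $t$ and control $E_w(s)$ for $s\in[t,t+T]$ by the forward energy growth estimate of \cite[proposition 2]{daou} in terms of $E_w(t)$ and $\int_t^{t+T}\int_\Omega|f|^2$ --- exactly the manoeuvre the paper performs in the proof of Theorem 1 --- so that everything stays inside $[t,t+T]$. Second, in your final compactness--uniqueness step unique continuation only yields $\partial_t v\equiv 0$ for the normalized limit $v$; to conclude $v\equiv 0$ (and hence contradict $\int_{K}|v_n|^2=1$) you still need that $H_D(\Omega)$ contains no nonzero harmonic function, i.e. the Lax--Phillips theorem \cite{lax-phil}, just as in the paper's own argument. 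With these adjustments your proof is complete.
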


\begin{proof}
To prove this result we argue by contradiction: If $\left( \ref%
{observability gradient1}\right) $ was false, there would exist a sequence
of numbers $\left( t_{n}\right) $ and a sequence of solutions $\left(
u_{n}\right) $ such that 
\begin{equation}
\int_{t_{n}}^{t_{n}+T}\int_{\Omega }\chi ^{2}\left( x\right) \left(
\left\vert \nabla u_{n}\right\vert ^{2}+\left\vert \partial
_{t}u_{n}\right\vert ^{2}\right) dxds\geq n\left(
\int_{t_{n}}^{t_{n}+T}\int_{\Omega }a\left( x\right) \left\vert \partial
_{t}u_{n}\right\vert ^{2}dxdt\right) +\delta E_{u_{n}}\left( t_{n}\right) .
\label{proof proposition 4 contradiction estimate}
\end{equation}%
Setting%
\begin{equation*}
\lambda _{n}^{2}=\int_{t_{n}}^{t_{n}+T}\int_{\Omega }\chi ^{2}\left(
x\right) \left( \left\vert \nabla u_{n}\right\vert ^{2}+\left\vert \partial
_{t}u_{n}\right\vert ^{2}\right) dxds\text{ and }v_{n}=\frac{u_{n}\left(
t_{n}+\cdot \right) }{\lambda _{n}}.
\end{equation*}%
It is clear that $\left( \ref{proof proposition 4 contradiction estimate}%
\right) ,$ gives%
\begin{equation}
\int_{0}^{T}\int_{\Omega }a\left( x\right) \left\vert \partial
_{t}v_{n}\right\vert ^{2}dxdt\underset{n\rightarrow +\infty }{%
\longrightarrow }0\text{ and }E_{v_{n}}\left( 0\right) \leq \frac{1}{\delta }%
.  \label{null limit consequnce}
\end{equation}%
Let $Z_{n}$ be the solution of the following system%
\begin{equation*}
\left\{ 
\begin{array}{ll}
\partial _{t}^{2}Z_{n}-\Delta Z_{n}=0 & 
\mathbb{R}
_{+}\times \Omega , \\ 
Z_{n}=0 & 
\mathbb{R}
_{+}\times \Gamma , \\ 
\left( Z_{n}\left( 0\right) ,\partial _{t}Z_{n}\left( 0\right) \right) =%
\frac{1}{\lambda _{n}}\left( u_{n}\left( t_{n}\right) ,\partial
_{t}u_{n}\left( t_{n}\right) \right) . & 
\end{array}%
\right. 
\end{equation*}%
The hyperbolic energy inequality gives%
\begin{equation*}
\underset{\left[ 0,T\right] }{\sup }E_{v_{n}-Z_{n}}^{1/2}\left( s\right)
\leq 2\left\Vert a\left( x\right) \partial _{t}v_{n}\right\Vert
_{L^{1}\left( \left[ 0,T\right] ,L^{2}\left( \Omega \right) \right) },
\end{equation*}%
Now using $\left( \ref{null limit consequnce}\right) $, we infer that \ 
\begin{equation}
\underset{\left[ 0,T\right] }{\sup }E_{v_{n}-Z_{n}}\left( s\right) \underset{%
n\rightarrow +\infty }{\longrightarrow }0.  \label{energy difference}
\end{equation}%
On the other hand,%
\begin{equation*}
\frac{1}{2}\int_{0}^{T}\int_{\omega }\left\vert \partial
_{t}Z_{n}\right\vert ^{2}dxdt\leq \underset{\left[ 0,T\right] }{2T\sup }%
E_{v_{n}-Z_{n}}\left( s\right) +\int_{0}^{T}\int_{\omega }\left\vert
\partial _{t}v_{n}\right\vert ^{2}dxdt.
\end{equation*}%
Since $a\left( x\right) \geq \epsilon _{0}>0$ on $\omega $, from $\left( \ref%
{null limit consequnce}\right) $, we deduce that%
\begin{equation*}
\int_{0}^{T}\int_{\omega }\left\vert \partial _{t}v_{n}\right\vert ^{2}dxdt%
\underset{n\rightarrow +\infty }{\longrightarrow }0.
\end{equation*}%
\ $\left( \ref{energy difference}\right) $ combined with the result above,
gives%
\begin{equation}
\int_{0}^{T}\int_{\omega }\left\vert \partial _{t}Z_{n}\right\vert ^{2}dxdt%
\underset{n\rightarrow +\infty }{\longrightarrow }0.
\label{null limit consequnceZ}
\end{equation}%
It is clear that 
\begin{equation*}
\underset{\left[ 0,T\right] }{\sup }E_{Z_{n}}\left( t\right)
=E_{Z_{n}}\left( 0\right) =E_{v_{n}}\left( 0\right) \leq \frac{1}{\delta }.
\end{equation*}%
Therefore, along a subsequence,$\left( Z_{n}\right) $ is convergent to a
function 
\begin{equation*}
L^{2}\in C\left( \left[ 0,T\right] ;H_{D}\left( \Omega \right) \right) \text{
and }\partial _{t}Z\in L^{2}\left( \left[ 0,T\right] ;L^{2}\left( \Omega
\right) \right) ,
\end{equation*}%
with respect to the weak topology. Since $Z$ satisfies%
\begin{equation*}
\left\{ 
\begin{array}{ll}
\partial _{t}^{2}Z-\Delta Z=0 & \text{in }\left[ 0,T\right] \times \Omega ,
\\ 
Z=0 & \text{on }\left[ 0,T\right] \times \Gamma , \\ 
\left( Z_{0},Z_{1}\right) \in H_{D}\left( \Omega \right) \times L^{2}\left(
\Omega \right)  &  \\ 
\partial _{t}Z\left( t,x\right) =0 & \text{on }\left[ 0,T\right] \times
\omega .%
\end{array}%
\right. 
\end{equation*}%
Therefore we have 
\begin{equation*}
Z\in C\left( \left[ 0,T\right] ;H_{D}\left( \Omega \right) \right) \text{
and }\partial _{t}Z\in C\left( \left[ 0,T\right] ;L^{2}\left( \Omega \right)
\right) ,
\end{equation*}%
We remind that $\left\{ x\in 
\mathbb{R}
^{d},\text{ }\left\vert x\right\vert \geq L\right\} \subset \omega $. By a
classical result of unique continuation, we obtain that $\partial
_{t}Z\equiv 0$ on $\left[ 0,T\right] \times \Omega $. this mean that $%
Z\left( t,x\right) =Z\left( x\right) $ is independent of $t$. Therefore, we
have 
\begin{equation*}
\begin{array}{l}
\Delta Z=0\text{ and }Z\in H_{D}\left( \Omega \right) ,%
\end{array}%
\end{equation*}%
we conclude from this that $Z\equiv 0$ on $\left[ 0,T\right] \times \Omega $
(cf. \cite[theorem 2.2 p 145]{lax-phil}).

The sequence $\left( Z_{n}\right) $ is bounded in $H_{loc}^{1}\left( \left(
0,T\right) \times \Omega \right) $, so eventually after extracting a
subsequence we can associate to the sequence $(Z_{n})$ a microlocal defect
measure $\mu $. This measure satisfies these properties: The support of $\mu 
$ is contained in the characteristic set of the wave operator and it
propagates along the geodesics of $\Omega $.

Now using $\left( \ref{null limit consequnceZ}\right) $, we deduce that 
\begin{equation*}
\mu =0,\text{ on }\left( 0,T\right) \times \omega .
\end{equation*}%
Since $(\omega $,$T)$ geometrically controls $\Omega $ and $\mu $ propagates
along geodesic flow, therefore we obtain 
\begin{equation*}
\mu =0,\text{ on }\left( 0,T\right) \times \Omega .
\end{equation*}%
Let $R>0$ such that the support of $\chi $ is contained in $B_{R}$ and $%
\varphi \in C_{0}^{\infty }\left( 0,T\right) $ such that%
\begin{equation*}
\int_{0}^{T}\varphi \left( s\right) ds\geq \epsilon >0.
\end{equation*}%
Using the finite speed propagation property, we obtain%
\begin{equation*}
\int_{0}^{T}\varphi \left( s\right) \int_{\Omega \cap B_{R+T}}\left\vert
\nabla Z_{n}\left( s\right) \right\vert ^{2}+\left\vert \partial
_{t}Z_{n}\left( s\right) \right\vert ^{2}dxds\geq \epsilon \int_{\Omega \cap
B_{R}}\left\vert \nabla Z_{n}\left( t\right) \right\vert ^{2}+\left\vert
\partial _{t}Z_{n}\left( t\right) \right\vert ^{2}dx,
\end{equation*}%
for all $t\in \left[ 0,T\right] .$ Now passing to the limit and using the
fact that%
\begin{equation*}
\mu =0,\text{ on }\left( 0,T\right) \times \Omega ,
\end{equation*}%
we deduce that%
\begin{equation*}
\int_{\Omega \cap B_{R}}\left\vert \nabla Z_{n}\left( t\right) \right\vert
^{2}+\left\vert \partial _{t}Z_{n}\left( t\right) \right\vert ^{2}dx\underset%
{n\rightarrow +\infty }{\longrightarrow }0,\text{ for all }t\in \left[ 0,T%
\right] .
\end{equation*}%
Using the result above and $\left( \ref{energy difference}\right) $\ we
deduce that%
\begin{equation*}
\int_{\Omega \cap B_{R}}\left\vert \nabla v_{n}\left( t\right) \right\vert
^{2}+\left\vert \partial _{t}v_{n}\left( t\right) \right\vert ^{2}dx\underset%
{n\rightarrow +\infty }{\longrightarrow }0,\text{ for all }t\in \left[ 0,T%
\right] .
\end{equation*}%
So we conclude that%
\begin{equation*}
\int_{\Omega }\chi ^{2}\left( x\right) \left( \left\vert \nabla v_{n}\left(
t\right) \right\vert ^{2}+\left\vert \partial _{t}v_{n}\left( t\right)
\right\vert ^{2}\right) dx\underset{n\rightarrow +\infty }{\longrightarrow }%
0,\text{ for all }t\in \left[ 0,T\right] .
\end{equation*}%
The fact that the energy of $v_{n}$ is decreasing, gives%
\begin{equation*}
\int_{\Omega }\chi ^{2}\left( x\right) \left( \left\vert \nabla v_{n}\left(
t\right) \right\vert ^{2}+\left\vert \partial _{t}v_{n}\left( t\right)
\right\vert ^{2}\right) dx\leq C,\text{ for all }t\in \left[ 0,T\right] .
\end{equation*}%
By the dominated convergence theorem we infer that%
\begin{equation*}
1=\int_{0}^{T}\int_{\Omega }\chi ^{2}\left( x\right) \left( \left\vert
\nabla v_{n}\left( t\right) \right\vert ^{2}+\left\vert \partial
_{t}v_{n}\left( t\right) \right\vert ^{2}\right) dxdt\underset{n\rightarrow
+\infty }{\longrightarrow }0.
\end{equation*}
\end{proof}

As a corollary we have,

\begin{corollary}
We assume that Hyp A holds and $(\omega $,$T)$ geometrically controls $%
\Omega $. Let $\delta $,$R>0$. There exists $C_{T,\delta ,R}>0$, such that
the following inequality%
\begin{equation}
\int_{t}^{t+T}\int_{\Omega \cap B_{R}}\left( \left\vert \nabla u\right\vert
^{2}+\left\vert \partial _{t}u\right\vert ^{2}\right) dxds\leq C_{T,\delta
,R}\left( \int_{t}^{t+T}\int_{\Omega }a\left( x\right) \left\vert \partial
_{t}u\right\vert ^{2}dxds\right) +\delta E_{u}\left( t\right) ,
\label{observability gradient}
\end{equation}%
holds for every $t\geq 0$ and for all $u$ solution of $\left( \ref{system}%
\right) $ with initial data $\left( u_{0},u_{1}\right) $ in $H$.
\end{corollary}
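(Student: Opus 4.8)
The plan is to obtain this as an immediate specialization of Proposition \ref{proposition onservability global}, whose weighted estimate already contains all the analytic content; the only task is to choose the cutoff $\chi $ so that the weight $\chi ^{2}$ dominates the characteristic function of $B_{R}$. First I would fix a function $\chi \in C_{0}^{\infty }\left( \mathbb{R}^{d}\right) $ with $0\leq \chi \leq 1$ and $\chi \equiv 1$ on $B_{R}$ (for instance, by mollifying the indicator of $B_{R+1}$). For this $\chi $, Proposition \ref{proposition onservability global} furnishes a constant $C_{T,\delta ,\chi }$ for which the weighted inequality $\left( \ref{observability gradient1}\right) $ holds for every $t\geq 0$ and every solution $u$ of $\left( \ref{system}\right) $ with data in $H$.

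Next I would compare the two left-hand sides. Since $\chi ^{2}\geq 0$ on all of $\Omega $ and $\chi ^{2}\equiv 1$ on $B_{R}$, for each fixed time $s$ one has
\begin{equation*}
\int_{\Omega \cap B_{R}}\left( \left\vert \nabla u\right\vert ^{2}+\left\vert \partial _{t}u\right\vert ^{2}\right) dx=\int_{\Omega \cap B_{R}}\chi ^{2}\left( x\right) \left( \left\vert \nabla u\right\vert ^{2}+\left\vert \partial _{t}u\right\vert ^{2}\right) dx\leq \int_{\Omega }\chi ^{2}\left( x\right) \left( \left\vert \nabla u\right\vert ^{2}+\left\vert \partial _{t}u\right\vert ^{2}\right) dx.
\end{equation*}
Integrating this in $s$ over $\left[ t,t+T\right] $ and then inserting the estimate $\left( \ref{observability gradient1}\right) $ of Proposition \ref{proposition onservability global} yields
\begin{equation*}
\int_{t}^{t+T}\int_{\Omega \cap B_{R}}\left( \left\vert \nabla u\right\vert ^{2}+\left\vert \partial _{t}u\right\vert ^{2}\right) dxds\leq C_{T,\delta ,\chi }\left( \int_{t}^{t+T}\int_{\Omega }a\left( x\right) \left\vert \partial _{t}u\right\vert ^{2}dxds\right) +\delta E_{u}\left( t\right) ,
\end{equation*}
so the claim holds with $C_{T,\delta ,R}:=C_{T,\delta ,\chi }$, the chosen $\chi $ depending only on $R$.

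There is no real obstacle here: all the difficulty, namely the contradiction argument and the propagation of the microlocal defect measure along the geodesic flow under the Geometric Control Condition, has already been carried out in the proof of Proposition \ref{proposition onservability global}. The corollary is purely a bookkeeping step, trading the smooth weight $\chi ^{2}$ for the sharp cutoff to $B_{R}$ at the harmless cost of enlarging the support of the test weight; the parameter $\delta $ and the damping term on the right-hand side are left untouched by this substitution.
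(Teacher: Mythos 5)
Your proposal is correct and is exactly the step the paper intends: the corollary follows from Proposition \ref{proposition onservability global} by choosing $\chi \in C_{0}^{\infty }\left( \mathbb{R}^{d}\right) $ with $0\leq \chi \leq 1$ and $\chi \equiv 1$ on $B_{R}$, so that $\chi ^{2}$ dominates the indicator of $B_{R}$, which is why the paper states it without proof. Nothing further is needed.
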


In order to prove theorem 1 we need the following result

\begin{lemma}
\label{lemma xt}Let $\psi \in C_{0}^{\infty }\left( 
\mathbb{R}
^{d}\right) $ such that $0\leq \psi \leq 1$ and%
\begin{equation*}
\psi \left( x\right) =\left\{ 
\begin{array}{ll}
1 & \text{for }\left\vert x\right\vert \leq L \\ 
0 & \text{for }\left\vert x\right\vert \geq 2L%
\end{array}%
\right.
\end{equation*}%
Setting $w=\psi u$ and $v=\left( 1-\psi \right) u$ where $u$ is a solution
of $\left( \ref{system}\right) $ with initial data in $H_{0}^{1}\left(
\Omega \right) \times L^{2}\left( \Omega \right) $. Let%
\begin{equation*}
X\left( t\right) =\int_{\Omega }v\left( t\right) \partial _{t}v\left(
t\right) dx+\frac{1}{2}\int_{\Omega }a\left( x\right) \left\vert v\left(
t\right) \right\vert ^{2}dx+kE_{u}\left( t\right) ,
\end{equation*}%
where $k$ is a positive constant. We have%
\begin{eqnarray}
&&X\left( t+T\right) -X\left( t\right) +\int_{t}^{t+T}E_{u}\left( s\right)
ds+\left( k-\frac{2}{\epsilon _{0}}\right) \int_{t}^{t+T}\int_{\Omega
}a\left\vert \partial _{t}u\right\vert ^{2}dxds  \notag \\
&\leq &2\int_{t}^{t+T}E_{w}\left( s\right) ds+\int_{t}^{t+T}\int_{\Omega
}\left\vert \nabla \psi \right\vert ^{2}\left\vert u\right\vert ^{2}dxds.
\label{X t estimate}
\end{eqnarray}
\end{lemma}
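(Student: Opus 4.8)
The plan is to treat $v=(1-\psi)u$ by the multiplier $v$ itself, the whole point being that $\operatorname{supp}(1-\psi)\subset\{|x|\ge L\}\subset\{a\ge\epsilon_0\}$, so the kinetic energy of $v$ can be absorbed into the damping term. I would first work with data in $D(A^{n})$ (so all the integrations by parts below are legitimate) and recover the general case by density at the end.

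First I would record the equation solved by $v$. Since $u$ solves $\left( \ref{system}\right) $ and $v=(1-\psi)u$, a direct computation gives
\[
\partial_t^2 v-\Delta v+a\partial_t v=g,\qquad g:=2\nabla\psi\cdot\nabla u+u\Delta\psi .
\]
Multiplying by $v$ and integrating over $\Omega$ (the boundary terms vanish because $v=0$ on $\Gamma$ and $v\in H^{1}$), and using $\frac{d}{dt}\tfrac12\int_\Omega a|v|^2\,dx=\int_\Omega a\,v\,\partial_t v\,dx$ to cancel the damping contribution, I obtain
\[
\frac{d}{dt}\left(\int_\Omega v\,\partial_t v\,dx+\frac12\int_\Omega a|v|^2\,dx\right)=\int_\Omega\left(|\partial_t v|^2-|\nabla v|^2+v g\right)dx .
\]
Combining this with $E_u'(s)=-\int_\Omega a|\partial_t u|^2\,dx$, which comes from differentiating $\left( \ref{energy inequality}\right) $, yields $X'(s)=\int_\Omega\left(|\partial_t v|^2-|\nabla v|^2+v g\right)dx-k\int_\Omega a|\partial_t u|^2\,dx$.

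Next I would integrate over $[t,t+T]$ and substitute $X(t+T)-X(t)$ into the left-hand side of $\left( \ref{X t estimate}\right) $. The two occurrences of $k\int a|\partial_t u|^2$ combine, leaving the coefficient $-\tfrac{2}{\epsilon_0}$, so the estimate reduces to the pointwise-in-time inequality
\[
\int_\Omega\left(|\partial_t v|^2-|\nabla v|^2+v g\right)dx+E_u(s)-\frac{2}{\epsilon_0}\int_\Omega a|\partial_t u|^2\,dx\le 2E_w(s)+\int_\Omega|\nabla\psi|^2|u|^2\,dx .
\]

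The crux is the source term. Integrating the $u\Delta\psi$ piece of $\int_\Omega v g$ by parts (again $u=0$ on $\Gamma$, and $\nabla\psi$ is compactly supported) I expect the cross terms $\pm 2\int(1-\psi)u\,\nabla\psi\cdot\nabla u$ to cancel, leaving exactly $\int_\Omega v g\,dx=\int_\Omega|\nabla\psi|^2|u|^2\,dx$, which then cancels the last term on the right. It remains to verify the reduced inequality by splitting into potential and kinetic parts. For the potential part I need $\tfrac12\int|\nabla u|^2\le\int|\nabla v|^2+\int|\nabla w|^2$, which is immediate from $u=v+w$ and $|\nabla u|^2\le 2(|\nabla v|^2+|\nabla w|^2)$. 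For the kinetic part, writing $\partial_t v=(1-\psi)\partial_t u$ and $\partial_t w=\psi\,\partial_t u$, I reduce to $\int_\Omega(\tfrac32-2\psi)|\partial_t u|^2\,dx\le\frac{2}{\epsilon_0}\int_\Omega a|\partial_t u|^2\,dx$, which follows from the \emph{pointwise} bound $\tfrac32-2\psi\le\frac{2}{\epsilon_0}a$: where $\psi\ge\tfrac34$ the left side is $\le 0$, and where $\psi<\tfrac34$ one has $|x|>L$, hence $a>\epsilon_0$ by Hyp A and $\tfrac32-2\psi<\tfrac32\le 2\le\frac{2a}{\epsilon_0}$. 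Integrating the reduced inequality over $[t,t+T]$ gives $\left( \ref{X t estimate}\right) $. The main obstacle is the bookkeeping: getting the integration-by-parts identity $\int_\Omega vg=\int_\Omega|\nabla\psi|^2|u|^2$ exactly right so that the weight term cancels, and arranging the constant $\tfrac{2}{\epsilon_0}$ precisely so that $|\partial_t v|^2$ is absorbed into the damping via the coefficient inequality above.
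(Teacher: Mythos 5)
Your proposal is correct and follows essentially the same route as the paper: the multiplier $v$ applied to the equation $\partial_t^2v-\Delta v+a\partial_tv=2\nabla\psi\cdot\nabla u+u\Delta\psi$, the integration-by-parts identity $\int_\Omega vg\,dx=\int_\Omega|\nabla\psi|^2|u|^2dx$, and absorption of $\int|\partial_tv|^2$ into the damping via $\operatorname{supp}(1-\psi)\subset\{a>\epsilon_0\}$. The only (harmless) difference is in the final bookkeeping: the paper uses $E_u\le 2E_v+2E_w$ together with $\int|\partial_tv|^2\le\frac1{\epsilon_0}\int a|\partial_tu|^2$, whereas you split into potential and kinetic parts and verify the pointwise coefficient bound $\tfrac32-2\psi\le\tfrac{2a}{\epsilon_0}$.
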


\begin{proof}
Noting that for each $\left( u_{0},u_{1}\right) $ in $H_{0}^{1}\left( \Omega
\right) \times L^{2}\left( \Omega \right) $ the solution $u$ of $\left( \ref%
{system}\right) $ are given as a limit of smooth solution $u_{n}$ with
initial data $\left( u_{n,0},u_{n,1}\right) $ smooth such that $\left(
u_{n,0},u_{n,1}\right) \underset{n\rightarrow +\infty }{\longrightarrow }%
\left( u_{0},u_{1}\right) $ in $H_{0}^{1}\left( \Omega \right) \times
L^{2}\left( \Omega \right) $. Note that 
\begin{equation*}
\left\Vert u_{n}\left( t,.\right) -u\left( t,.\right) \right\Vert
_{H^{1}}+\left\Vert \partial _{t}u_{n}\left( t,.\right) -\partial
_{t}u\left( t,.\right) \right\Vert _{L^{2}}\underset{n\rightarrow +\infty }{%
\longrightarrow }0,
\end{equation*}%
uniformly on the each closed interval $\left[ 0,T\right] $ for any $T>0$.
Therefore we may assume that $u$ is smooth.

We have $v=\left( 1-\psi \right) u$. Then $v$ is a solution of 
\begin{equation}
\left\{ 
\begin{array}{ll}
\partial _{t}^{2}v-\Delta v+a\left( x\right) \partial _{t}v=f\left(
t,x\right) & 
\mathbb{R}
_{+}\times \Omega , \\ 
v=0 & 
\mathbb{R}
_{+}\times \Gamma , \\ 
\left( v\left( 0\right) ,\partial _{t}v\left( 0\right) \right) =\left(
1-\psi \right) \left( u_{0},u_{1}\right) , & 
\end{array}%
\right.  \label{sys v}
\end{equation}%
with 
\begin{equation*}
f\left( t,x\right) =2\nabla \psi \nabla u+u\Delta \psi .
\end{equation*}%
Using the fact that $v$ is a solution of $\left( \ref{sys v}\right) $ and
that%
\begin{equation*}
\frac{d}{dt}E_{u}\left( t\right) =-\int_{\Omega }a\left( x\right) \left\vert
\partial _{t}u\left( t\right) \right\vert ^{2}dx,
\end{equation*}%
we deduce that%
\begin{equation*}
\frac{d}{dt}X\left( t\right) =\int_{\Omega }\left\vert \partial _{t}v\left(
t\right) \right\vert ^{2}-\left\vert \nabla v\left( t\right) \right\vert
^{2}dx-k\int_{\Omega }a\left( x\right) \left\vert \partial _{t}u\left(
t\right) \right\vert ^{2}dx+\int_{\Omega }f\left( t,x\right) vdx.
\end{equation*}%
Since%
\begin{eqnarray*}
\int_{\Omega }f\left( t,x\right) vdx &=&\int_{\Omega }\left( 2\nabla \psi
\nabla u+u\Delta \psi \right) \left( 1-\psi \right) udx \\
&=&\int_{\Omega }\nabla \psi \nabla u^{2}+u^{2}\Delta \psi dx-\int_{\Omega }%
\frac{1}{2}\nabla \psi ^{2}\nabla u^{2}+u^{2}\psi \Delta \psi dx \\
&=&\int_{\Omega }\left\vert \nabla \psi \right\vert ^{2}\left\vert
u\right\vert ^{2}dx.
\end{eqnarray*}%
Thus we obtain%
\begin{eqnarray*}
\frac{d}{dt}X\left( t\right) &=&\int_{\Omega }\left\vert \partial
_{t}v\left( t\right) \right\vert ^{2}-\left\vert \nabla v\left( t\right)
\right\vert ^{2}dx-k\int_{\Omega }a\left( x\right) \left\vert \partial
_{t}u\left( t\right) \right\vert ^{2}dx+\int_{\Omega }\left\vert \nabla \psi
\right\vert ^{2}\left\vert u\right\vert ^{2}dx \\
&=&2\int_{\Omega }\left\vert \partial _{t}v\left( t\right) \right\vert
^{2}dx-2E_{v}\left( t\right) -k\int_{\Omega }a\left( x\right) \left\vert
\partial _{t}u\left( t\right) \right\vert ^{2}dx+\int_{\Omega }\left\vert
\nabla \psi \right\vert ^{2}\left\vert u\right\vert ^{2}dx \\
&\leq &2\int_{\Omega }\left\vert \partial _{t}v\left( t\right) \right\vert
^{2}dx-E_{u}\left( t\right) +2E_{w}\left( t\right) -k\int_{\Omega }a\left(
x\right) \left\vert \partial _{t}u\left( t\right) \right\vert
^{2}dx+\int_{\Omega }\left\vert \nabla \psi \right\vert ^{2}\left\vert
u\right\vert ^{2}dx.
\end{eqnarray*}%
Using the fact that the support of $\left( 1-\psi \right) $ is contained in
the set $\left\{ x\in \Omega ,a\left( x\right) >\epsilon _{0}\right\} $, we
infer that%
\begin{equation*}
\int_{\Omega }\left\vert \partial _{t}v\left( t\right) \right\vert
^{2}dx=\int_{\Omega }\left\vert \left( 1-\psi \right) \partial
_{t}u\right\vert ^{2}dx\leq \frac{1}{\epsilon _{0}}\int_{\Omega }a\left(
x\right) \left\vert \partial _{t}u\left( t\right) \right\vert ^{2}dx.
\end{equation*}%
This gives%
\begin{equation}
\frac{d}{dt}X\left( t\right) +E_{u}\left( t\right) +\left( k-\frac{2}{%
\epsilon _{0}}\right) \int_{\Omega }a\left( x\right) \left\vert \partial
_{t}u\left( t\right) \right\vert ^{2}dx\leq 2E_{w}\left( t\right)
+\int_{\Omega }\left\vert \nabla \psi \right\vert ^{2}\left\vert
u\right\vert ^{2}dx.  \label{Xt derivetive estimate}
\end{equation}%
Integrating the estimate above between $t$ and $t+T$ we get $\left( \ref{X t
estimate}\right) $.
\end{proof}

\subsection{Proof of Theorem 1}

In the sequel $C,$ $C_{T}$ and $C_{T,\delta }$ denote a generic positive
constants and any changes from one derivation to the next will not be
explicitly outlined.

Let $\psi \in C_{0}^{\infty }\left( 
\mathbb{R}
^{d}\right) $ such that $0\leq \psi \leq 1$ and%
\begin{equation*}
\psi \left( x\right) =\left\{ 
\begin{array}{ll}
1 & \text{for }\left\vert x\right\vert \leq L \\ 
0 & \text{for }\left\vert x\right\vert \geq 2L%
\end{array}%
\right. 
\end{equation*}%
Setting $w=\psi u$ and $v=\left( 1-\psi \right) u$ where $u$ is a solution
of $\left( \ref{system}\right) $ with initial data in $H_{0}^{1}\left(
\Omega \right) \times L^{2}\left( \Omega \right) $. Let%
\begin{equation*}
X\left( t\right) =\int_{\Omega }v\left( t\right) \partial _{t}v\left(
t\right) dx+\frac{1}{2}\int_{\Omega }a\left( x\right) \left\vert v\left(
t\right) \right\vert ^{2}dx+kE_{u}\left( t\right) .
\end{equation*}%
According to lemma \ref{lemma xt},%
\begin{eqnarray*}
&&X\left( t+T\right) -X\left( t\right) +\int_{t}^{t+T}E_{u}\left( s\right)
ds+\left( k-\frac{2}{\epsilon _{0}}\right) \int_{t}^{t+T}\int_{\Omega
}a\left\vert \partial _{t}u\right\vert ^{2}dxds \\
&\leq &2\int_{t}^{t+T}E_{w}\left( s\right) ds+\int_{t}^{t+T}\int_{\Omega
}\left\vert \nabla \psi \right\vert ^{2}\left\vert u\right\vert ^{2}dxds.
\end{eqnarray*}

We have $w=\psi u$. Then $w$ is a solution of 
\begin{equation*}
\left\{ 
\begin{array}{ll}
\partial _{t}^{2}w-\Delta w+a\left( x\right) \partial _{t}w=f\left(
t,x\right)  & 
\mathbb{R}
_{+}\times M, \\ 
w=0 & 
\mathbb{R}
_{+}\times \partial M, \\ 
\left( w\left( 0\right) ,\partial _{t}w\left( 0\right) \right) =\left( \psi
u_{0},\psi u_{1}\right) , & 
\end{array}%
\right. 
\end{equation*}%
with $M=\Omega \cap B_{2L}$ and 
\begin{equation*}
f\left( t,x\right) =-2\nabla \psi \nabla u-u\Delta \psi \in
L_{loc}^{2}\left( 
\mathbb{R}
_{+},L^{2}\left( M\right) \right) .
\end{equation*}%
Since $\left( \psi u_{0},\psi u_{1}\right) \in H_{0}^{1}\left( M\right)
\times L^{2}\left( M\right) $, using $\left( \ref{observability bounded}%
\right) $ we infer that the following inequality%
\begin{equation}
E_{w}\left( t\right) \leq C_{T}^{1}\left( \int_{s}^{s+T}\int_{\Omega
}a\left\vert \partial _{t}w\right\vert ^{2}+\left\vert f\left( \tau
,x\right) \right\vert ^{2}dxd\tau \right) ,
\label{energy w observability proof thm 1}
\end{equation}%
holds for every $t\geq 0$. According to ( \cite[proposition 2]{daou})%
\begin{eqnarray*}
E_{w}\left( s\right)  &\leq &2e^{s-t}\left( E_{w}\left( t\right)
+\int_{t}^{s}\int_{\Omega }\left\vert f\left( \tau ,x\right) \right\vert
^{2}dxd\tau \right)  \\
&\leq &2e^{T}\left( E_{w}\left( t\right) +\int_{t}^{t+T}\int_{\Omega
}\left\vert f\left( \tau ,x\right) \right\vert ^{2}dxd\tau \right) ,\text{
for }t\leq s\leq t+T.
\end{eqnarray*}%
The estimate above combined with $\left( \ref{energy w observability proof
thm 1}\right) $ gives%
\begin{equation*}
\int_{t}^{t+T}E_{w}\left( s\right) ds\leq \tilde{C}_{T}\left(
\int_{t}^{t+T}\int_{\Omega }a\left\vert \partial _{t}w\right\vert
^{2}+\left\vert f\left( \tau ,x\right) \right\vert ^{2}dxd\tau \right) ,
\end{equation*}%
since%
\begin{equation*}
\int_{t}^{t+T}\int_{\Omega }a\left\vert \partial _{t}w\right\vert
^{2}dxd\tau \leq \int_{t}^{t+T}\int_{\Omega }a\left\vert \partial
_{t}u\right\vert ^{2}dxd\tau ,
\end{equation*}%
we obtain%
\begin{equation*}
\int_{t}^{t+T}E_{w}\left( s\right) ds\leq \tilde{C}_{T}\left(
\int_{t}^{t+T}\int_{\Omega }a\left\vert \partial _{t}u\right\vert
^{2}+\left\vert f\left( \tau ,x\right) \right\vert ^{2}dxd\tau \right) ,
\end{equation*}%
On the other hand, using Poincare's inequality we obtain%
\begin{equation*}
\int_{t}^{t+T}\int_{\Omega }\left\vert f\left( \tau ,x\right) \right\vert
^{2}dxd\tau \leq C_{L}\int_{t}^{t+T}\int_{\Omega \cap B_{2L}}\left\vert
\nabla u\right\vert ^{2}dxds,
\end{equation*}%
for some $C_{L}>0$. The estimate $\left( \ref{observability gradient}\right) 
$, gives%
\begin{equation*}
\int_{t}^{t+T}E_{w}\left( s\right) ds\leq C_{T,\delta }\left(
\int_{t}^{t+T}\int_{\Omega }a\left\vert \partial _{t}u\right\vert
^{2}dxd\tau \right) +C_{L}\tilde{C}_{T}\delta E_{u}\left( t\right) ,
\end{equation*}%
since%
\begin{equation}
E_{u}\left( t\right) \leq \int_{t}^{t+T}\int_{\Omega }a\left\vert \partial
_{t}u\right\vert ^{2}dxd\tau +\frac{1}{T}\int_{t}^{t+T}E_{u}\left( s\right)
ds,  \label{proof of theorem 1 Eu integral estimate}
\end{equation}%
we deduce that%
\begin{equation*}
\int_{t}^{t+T}E_{w}\left( s\right) ds\leq C_{T,\delta }\left(
\int_{t}^{t+T}\int_{\Omega }a\left\vert \partial _{t}u\right\vert
^{2}dxd\tau \right) +C_{T}\delta \int_{t}^{t+T}E_{u}\left( s\right) ds.
\end{equation*}%
Using $\left( \ref{observability gradient}\right) $ and $\left( \ref{proof
of theorem 1 Eu integral estimate}\right) $, we infer that%
\begin{equation*}
\int_{t}^{t+T}\int_{\Omega }\left\vert \nabla \psi \right\vert
^{2}\left\vert u\right\vert ^{2}dxds\leq C_{T,\delta }\left(
\int_{t}^{t+T}\int_{\Omega }a\left\vert \partial _{t}u\right\vert
^{2}dxd\tau \right) +C_{T}\delta \int_{t}^{t+T}E_{u}\left( s\right) ds.
\end{equation*}%
\ Now $\left( \ref{X t estimate}\right) $ and the estimates above gives%
\begin{equation*}
\begin{array}{c}
X\left( t+T\right) -X\left( t\right) +\left( 1-3C_{T}\delta \right)
\int_{t}^{t+T}E_{u}\left( s\right) ds+\left( k-\frac{2}{\epsilon _{0}}%
-3C_{T,\delta }\right) \int_{t}^{t+T}\int_{\Omega }a\left\vert \partial
_{t}u\right\vert ^{2}dxds\leq 0.%
\end{array}%
\end{equation*}%
We have%
\begin{eqnarray}
X\left( t\right)  &\leq &\frac{3}{2}\left\Vert a\right\Vert _{\infty
}\int_{\Omega }\left\vert v\left( t\right) \right\vert ^{2}dx+\left( k+\frac{%
2}{\epsilon _{0}}\right) E_{u}\left( t\right) \text{ and}  \notag \\
X\left( t\right)  &\geq &\frac{\epsilon _{0}}{4}\int_{\Omega }\left\vert
v\left( t\right) \right\vert ^{2}dx+\left( k-\frac{8}{\epsilon _{0}}\right)
E_{u}\left( t\right) .  \label{k estimate}
\end{eqnarray}%
We choose $\delta $ and $k$ such that 
\begin{eqnarray*}
1-3C_{T}\delta  &=&\frac{1}{2}, \\
k-\frac{2}{\epsilon _{0}}-3C_{T,\delta } &\geq &\epsilon >0\text{ and }k-%
\frac{8}{\epsilon _{0}}\geq \epsilon ,
\end{eqnarray*}%
therefore we obtain%
\begin{equation}
X\left( t+T\right) -X\left( t\right) +\frac{1}{2}\int_{t}^{t+T}E_{u}\left(
s\right) ds+\epsilon \int_{t}^{t+T}\int_{\Omega }a\left\vert \partial
_{t}u\right\vert ^{2}dxds\leq 0,  \label{Xt final estimate}
\end{equation}%
this gives%
\begin{equation*}
X\left( nT\right) +\frac{1}{2}\int_{0}^{nT}E_{u}\left( s\right) ds\leq
X\left( 0\right) ,\text{ for all }n\in 
\mathbb{N}
.
\end{equation*}%
So there exists a positive constant $C$ such that 
\begin{eqnarray}
\underset{%
\mathbb{R}
_{+}}{\sup }X\left( t\right) +\int_{0}^{+\infty }E_{u}\left( s\right) ds
&\leq &CX\left( 0\right)   \notag \\
&\leq &CI_{0},  \label{estimate integral Eu t}
\end{eqnarray}%
with%
\begin{equation*}
I_{0}=\left\Vert u_{0}\right\Vert _{H^{1}}^{2}+\left\Vert u_{1}\right\Vert
_{L^{2}}^{2}.
\end{equation*}%
\ Since $1-\psi \equiv 1$ for $\left\vert x\right\vert \geq 2L$ and 
\begin{equation*}
X\left( t\right) \geq \frac{\epsilon _{0}}{4}\int_{\Omega }\left\vert
v\left( t\right) \right\vert ^{2}dx\geq \frac{\epsilon _{0}}{4}\int_{\left\{
\left\vert x\right\vert \geq 2L\right\} }\left\vert u\left( t\right)
\right\vert ^{2}dx,
\end{equation*}%
therefore using $\left( \ref{estimate integral Eu t}\right) $\ we obtain%
\begin{equation*}
\underset{%
\mathbb{R}
_{+}}{\sup }\int_{\left\{ \left\vert x\right\vert \geq 2L\right\}
}\left\vert u\left( t\right) \right\vert ^{2}dx\leq \frac{4C}{\epsilon _{0}}%
I_{0}.
\end{equation*}%
Poincare's inequality and the fact that the energy of $u$ is decreasing gives%
\begin{equation*}
\int_{\Omega \cap B_{2L}}\left\vert u\left( t\right) \right\vert ^{2}dx\leq
C_{L}\int_{\Omega }\left\vert \nabla u\left( t\right) \right\vert ^{2}dx\leq
C_{L}E_{u}\left( 0\right) .
\end{equation*}%
Combining the last two estimates, we get 
\begin{equation*}
\underset{%
\mathbb{R}
_{+}}{\sup }\int_{\Omega }\left\vert u\left( t\right) \right\vert ^{2}dx\leq 
\frac{4C}{\epsilon _{0}}I_{0}+C_{L}E_{u}\left( 0\right) \leq CI_{0}.
\end{equation*}%
The energy decay estimate follows from $\left( \ref{estimate integral Eu t}%
\right) $\ and the fact that%
\begin{equation}
\left( 1+t\right) E_{u}\left( t\right) \leq E_{u}\left( 0\right)
+\int_{0}^{+\infty }E_{u}\left( s\right) ds\leq CI_{0},\text{ for all }t\geq
0.  \label{theorem 1 energy result}
\end{equation}

This finishes the proof of theorem 1, now we give the proof of proposition 1.

\subsection{Proof of proposition 1}

Let $n\in 
\mathbb{N}
^{\ast }$ and $u$ solution of $\left( \ref{system}\right) $ with initial
data $\left( u_{0},u_{1}\right) $\ in $D\left( A^{n}\right) $ such that $%
u_{0}\in L^{2}\left( \Omega \right) $. We set $u_{n}=\partial _{t}^{n}u$.
First we prove 
\begin{equation}
\int_{0}^{+\infty }\left( 1+s\right) ^{n}E_{u_{n}}\left( s\right) ds\leq
C_{n}I_{0,n},\text{ for all }n\in 
\mathbb{N}
\label{1+t integral decay estimate regular data}
\end{equation}%
where%
\begin{equation*}
I_{0,n}=\sum_{i=0}^{n}\left\Vert A^{i}\left( 
\begin{array}{c}
u_{0} \\ 
u_{1}%
\end{array}%
\right) \right\Vert _{H}^{2}+\left\Vert u_{0}\right\Vert _{L^{2}}^{2}
\end{equation*}

Let $u$ be a solution of $\left( \ref{system}\right) $ with initial data $%
\left( u_{0},u_{1}\right) $\ in $D\left( A^{0}\right) $ such that $u_{0}\in
L^{2}\left( \Omega \right) $. From $\left( \ref{estimate integral Eu t}%
\right) $ we infer that\ 
\begin{equation*}
\int_{0}^{+\infty }E_{u}\left( s\right) ds\leq C_{0}I_{0}.
\end{equation*}

We assume that the following estimate%
\begin{equation}
\int_{0}^{+\infty }\left( 1+s\right) ^{p}E_{u_{p}}\left( s\right) ds\leq
C_{p}I_{0,p},  \label{hypothese}
\end{equation}%
holds, for all solution $u$ of $\left( \ref{system}\right) $ with initial
data $\left( u_{0},u_{1}\right) $\ in $D\left( A^{p}\right) $ such that $%
u_{0}\in L^{2}\left( \Omega \right) $.

Let $u$ be a solution of $\left( \ref{system}\right) $ with initial data $%
\left( u_{0},u_{1}\right) $ in $D\left( A^{p+1}\right) $ such that $u_{0}\in
L^{2}\left( \Omega \right) $.

We have $u_{p+1}=\partial _{t}^{p}\left( \partial _{t}u\right) $. Since $%
\left( \partial _{t}u\left( 0\right) ,\partial _{t}^{2}u\left( 0\right)
\right) =A\left( 
\begin{array}{c}
u_{0} \\ 
u_{1}%
\end{array}%
\right) \in D\left( A^{p}\right) $ and $\partial _{t}u\left( 0\right)
=u_{1}\in L^{2}\left( \Omega \right) $. According to $\left( \ref{hypothese}%
\right) $, we have%
\begin{eqnarray}
\int_{0}^{+\infty }\left( 1+s\right) ^{p}E_{u_{p+1}}\left( s\right) ds &\leq
&C_{p}\left( \sum_{i=0}^{p}\left\Vert A^{i+1}\left( 
\begin{array}{c}
u_{0} \\ 
u_{1}%
\end{array}%
\right) \right\Vert _{H}^{2}+\left\Vert u_{1}\right\Vert _{L^{2}}^{2}\right) 
\notag \\
&\leq &C_{p}I_{0,p+1}.  \label{hypothese 1}
\end{eqnarray}

Let $\psi \in C_{0}^{\infty }\left( 
\mathbb{R}
^{d}\right) $ such that $0\leq \psi \leq 1$ and%
\begin{equation*}
\psi \left( x\right) =\left\{ 
\begin{array}{cc}
1 & \text{for }\left\vert x\right\vert \leq L \\ 
0 & \text{for }\left\vert x\right\vert \geq 2L%
\end{array}%
\right. 
\end{equation*}%
Setting $w=\psi u_{p+1}$ and $v=\left( 1-\psi \right) u_{p+1}$. Let%
\begin{equation*}
X\left( t\right) =\int_{\Omega }v\left( t\right) \partial _{t}v\left(
t\right) dx+\frac{1}{2}\int_{\Omega }a\left( x\right) \left\vert v\left(
t\right) \right\vert ^{2}dx+kE_{u_{p+1}}\left( t\right) ,
\end{equation*}%
where $k$ is a positive constant. $u_{p+1}$ satisfies 
\begin{equation*}
\left\{ 
\begin{array}{lc}
\partial _{t}^{2}u_{p+1}-\Delta u_{p+1}+a\left( x\right) \partial
_{t}u_{p+1}=0 & \text{in }\mathbb{R}_{+}\times \Omega , \\ 
u_{p+1}=0 & \text{on }\mathbb{R}_{+}\times \Gamma , \\ 
\left( u_{p+1}\left( 0,x\right) ,\partial _{t}u_{p+1}\left( 0,x\right)
\right) \in H_{0}^{1}\left( \Omega \right) \times L^{2}\left( \Omega \right)
, & 
\end{array}%
\right. 
\end{equation*}%
Then we know from $\left( \ref{Xt final estimate}\right) $ that%
\begin{equation}
X\left( t+T\right) -X\left( t\right) +\frac{1}{2}\int_{t}^{t+T}E_{u_{p+1}}%
\left( s\right) ds+\epsilon \int_{t}^{t+T}\int_{\Omega }a\left\vert \partial
_{t}u_{p+1}\right\vert ^{2}dxds\leq 0.  \label{X1 t estimate}
\end{equation}%
Multiplying the estimate above by $\left( 1+t+T\right) ^{p+1}$, we obtain%
\begin{equation*}
\begin{array}{l}
\left( 1+t+T\right) ^{p+1}X\left( t+T\right) -\left( 1+t\right)
^{p+1}X\left( t\right)  \\ 
+\frac{1}{2}\int_{t}^{t+T}\left( 1+s\right) ^{p+1}E_{u_{p+1}}\left( s\right)
ds\leq C_{T}\left( 1+t\right) ^{p}X\left( t\right) .%
\end{array}%
\end{equation*}%
Therefore using $\left( \ref{hypothese}\right) ,$ $\left( \ref{hypothese 1}%
\right) $\ and the fact that%
\begin{equation}
X\left( t\right) \leq \frac{3}{2}\left\Vert a\right\Vert _{\infty
}E_{u_{p}}\left( t\right) +\left( k+\frac{2}{\epsilon _{0}}\right)
E_{u_{p+1}}\left( t\right) ,  \notag
\end{equation}%
we deduce that for any $q\in 
\mathbb{N}
^{\ast }$%
\begin{eqnarray*}
\frac{1}{2}\int_{0}^{qT}\left( 1+s\right) ^{p+1}E_{u_{p+1}}\left( s\right)
ds &\leq &C_{T}\overset{q-1}{\underset{i=0}{\sum }}\left( 1+iT\right)
^{p}X\left( iT\right)  \\
&\leq &C_{T}\overset{q-1}{\underset{i=0}{\sum }}\left( 1+iT\right)
^{p}\left( E_{u_{p+1}}\left( iT\right) +E_{u_{p}}\left( iT\right) \right)  \\
&\leq &C_{T}\overset{q-1}{\underset{i=0}{\sum }}\int_{iT}^{\left( i+1\right)
T}\left( 1+s\right) ^{p}\left( E_{u_{p+1}}\left( s\right) +E_{u_{p}}\left(
s\right) \right) ds \\
&&+C_{T}\left( E_{u_{p+1}}\left( 0\right) +E_{u_{p}}\left( 0\right) \right) 
\\
&\leq &C_{T}\int_{0}^{+\infty }\left( 1+s\right) ^{p}\left(
E_{u_{p+1}}\left( s\right) +E_{u_{p}}\left( s\right) \right) ds \\
&&+C_{T}\left( E_{u_{p+1}}\left( 0\right) +E_{u_{p}}\left( 0\right) \right) 
\\
&\leq &C_{T,p}I_{0,p+1}.
\end{eqnarray*}%
We deduce that%
\begin{equation*}
\int_{0}^{+\infty }\left( 1+s\right) ^{p+1}E_{u_{p+1}}\left( s\right) ds\leq
C_{p+1}I_{0,p+1}
\end{equation*}%
We remind that we have proved that 
\begin{equation*}
\int_{0}^{+\infty }\left( 1+s\right) ^{n}E_{u_{n}}\left( s\right) ds\leq
C_{n}I_{0,n}.
\end{equation*}%
Now the energy decay estimate follows from the fact that%
\begin{eqnarray*}
\left( 1+t\right) ^{n+1}E_{u_{n}}\left( t\right)  &\leq &E_{u_{n}}\left(
0\right) +\left( n+1\right) \int_{0}^{t}\left( 1+s\right)
^{n}E_{u_{n}}\left( s\right) ds \\
&\leq &C_{n}I_{0,n},
\end{eqnarray*}%
for all $t\geq 0$. Now using the estimate above, we infer that,%
\begin{eqnarray*}
\left( 1+t\right) ^{n}\left\Vert \partial _{t}^{n}u\left( t\right)
\right\Vert _{L^{2}}^{2} &\leq &2\left( 1+t\right) ^{n}E_{u_{n-1}}\left(
t\right)  \\
&\leq &C_{1}I_{0,n-1}\text{ for all }t\geq 0,\text{ }
\end{eqnarray*}

We have $\partial _{t}^{n-1}u$ is a solution of the following system 
\begin{equation}
\left\{ 
\begin{array}{lc}
\partial _{t}^{n+1}u-\Delta \partial _{t}^{n-1}u+a\left( x\right) \partial
_{t}^{n}u=0 & \text{in }\mathbb{R}_{+}\times \Omega , \\ 
\partial _{t}^{n-1}u=0 & \text{on }\mathbb{R}_{+}\times \Gamma , \\ 
\left( \partial _{t}^{n-1}u\left( 0,x\right) ,\partial _{t}^{n}u\left(
0,x\right) \right) \in D\left( A\right)  & 
\end{array}%
\right.   \label{sys dtnu}
\end{equation}%
therefore%
\begin{equation*}
\left( \partial _{t}^{n-1}u,\partial _{t}^{n}u\right) \in C^{1}\left( 
\mathbb{R}
_{+},H\right) .
\end{equation*}%
Using Eq $\left( \ref{sys dtnu}\right) ,$ we infer that%
\begin{eqnarray*}
\left( 1+t\right) ^{n}\left\Vert \Delta \partial _{t}^{n-1}u\left( t\right)
\right\Vert _{L^{2}}^{2} &\leq &C\left( 1+t\right) ^{n}\left(
E_{u_{n}}\left( t\right) +E_{u_{n-1}}\left( t\right) \right)  \\
&\leq &CI_{0,n}.
\end{eqnarray*}

\section{Proof of Theorem 2}

This section is devoted to the proof of theorem 2 and proposition 2, we
begin by giving some preliminary results.

\begin{proposition}
\label{proposition onservability global copy(1)}We assume that Hyp A holds
and $(\omega $,$T)$ geometrically controls $\Omega $. Let $\delta >0$ and $%
\chi \in C_{0}^{\infty }\left( 
\mathbb{R}
^{d}\right) $. There exists $C_{T,\delta ,\chi }>0$, such that the following
inequality%
\begin{equation}
\begin{array}{c}
\int_{t}^{t+T}\int_{\Omega }\chi ^{2}\left( x\right) \left( 1+s\right)
\left\vert \nabla u\right\vert ^{2}dxds\leq C_{T,\delta ,\chi }\left(
\int_{t}^{t+T}\int_{\Omega }a\left( x\right) \left( 1+s\right) \left\vert
\partial _{t}u\right\vert ^{2}dxds\right) \\ 
\text{ \ \ \ \ \ \ \ \ \ \ \ \ \ \ \ \ \ \ \ \ \ \ \ \ \ \ \ \ \ \ \ \ \ \ \
\ \ \ \ \ \ \ \ \ \ \ \ \ \ \ \ \ }+\delta \left( \left( 1+t\right)
E_{u}\left( t\right) +\int_{t}^{t+T}\int_{\Omega }\left\vert u\right\vert
^{2}dxds\right) .%
\end{array}
\label{observability gradient 1+s}
\end{equation}%
holds for every $t\geq 0$ and for all $u$ solution of $\left( \ref{system}%
\right) $ with initial data $\left( u_{0},u_{1}\right) $ in $H_{0}^{1}\times
L^{2}$.
\end{proposition}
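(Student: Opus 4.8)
The plan is to deduce this weighted inequality directly from the unweighted Proposition \ref{proposition onservability global}, exploiting the elementary observation that on the time window $\left[ t,t+T\right] $ the weight $1+s$ is comparable to the constant $1+t$. Precisely, for every $s\in \left[ t,t+T\right] $ one has
\[
1+t\ \le\ 1+s\ \le\ 1+t+T\ \le\ \left( 1+T\right) \left( 1+t\right) ,
\]
so the factor $1+s$ can be pulled out of, and pushed back into, the integrals at the sole cost of a multiplicative constant $1+T$.

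First I would bound the left-hand side from above by replacing the weight with its maximum over the window and completing the integrand to the full energy density,
\[
\int_{t}^{t+T}\!\!\int_{\Omega }\chi ^{2}\left( 1+s\right) \left\vert \nabla u\right\vert ^{2}dxds\ \le\ \left( 1+t+T\right) \int_{t}^{t+T}\!\!\int_{\Omega }\chi ^{2}\left( \left\vert \nabla u\right\vert ^{2}+\left\vert \partial _{t}u\right\vert ^{2}\right) dxds .
\]
To the integral on the right I would apply Proposition \ref{proposition onservability global}, but invoked with the rescaled parameter $\delta _{1}:=\delta /\left( 1+T\right) $ in place of $\delta $; writing $C_{1}:=C_{T,\delta _{1},\chi }$ this produces the bound
\[
\left( 1+t+T\right) \left( C_{1}\int_{t}^{t+T}\!\!\int_{\Omega }a\left\vert \partial _{t}u\right\vert ^{2}dxds+\delta _{1}E_{u}\left( t\right) \right) .
\]

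Next I would push the weight back inside, term by term. In the damping term I use $1+t+T\le \left( 1+T\right) \left( 1+s\right) $ for $s\ge t$, which converts $\left( 1+t+T\right) C_{1}\int a\left\vert \partial _{t}u\right\vert ^{2}$ into $\left( 1+T\right) C_{1}\int \left( 1+s\right) a\left\vert \partial _{t}u\right\vert ^{2}$; in the energy term I use $1+t+T\le \left( 1+T\right) \left( 1+t\right) $ together with $\delta _{1}\left( 1+T\right) =\delta $, which converts $\left( 1+t+T\right) \delta _{1}E_{u}\left( t\right) $ into exactly $\delta \left( 1+t\right) E_{u}\left( t\right) $. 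Setting $C_{T,\delta ,\chi }:=\left( 1+T\right) C_{T,\delta /\left( 1+T\right) ,\chi }$ I then obtain
\[
\int_{t}^{t+T}\!\!\int_{\Omega }\chi ^{2}\left( 1+s\right) \left\vert \nabla u\right\vert ^{2}dxds\ \le\ C_{T,\delta ,\chi }\int_{t}^{t+T}\!\!\int_{\Omega }a\left( 1+s\right) \left\vert \partial _{t}u\right\vert ^{2}dxds+\delta \left( 1+t\right) E_{u}\left( t\right) ,
\]
and since $\int_{t}^{t+T}\int_{\Omega }\left\vert u\right\vert ^{2}dxds\ge 0$, adding $\delta \int_{t}^{t+T}\int_{\Omega }\left\vert u\right\vert ^{2}dxds$ to the right-hand side only weakens the inequality, giving precisely $\left( \ref{observability gradient 1+s}\right) $.

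The argument is essentially bookkeeping, and the only delicate point is cosmetic: making sure the coefficient in front of $\left( 1+t\right) E_{u}\left( t\right) $ comes out as exactly $\delta $ rather than a constant multiple of $\delta $. This is arranged precisely by feeding the rescaled parameter $\delta /\left( 1+T\right) $ into the unweighted proposition, so that the unavoidable factor $1+T$ produced when reinserting the weight cancels it back to $\delta $. I would also note in passing that the $L^{2}$ term on the right-hand side is not actually needed for this proposition — the scheme above yields a slightly stronger estimate without it — and is retained only because it is convenient in the subsequent weighted energy argument used to prove Theorem 2.
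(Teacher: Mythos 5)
Your argument is correct, and it reaches the conclusion by a genuinely different and much shorter route than the paper. The paper does not deduce the weighted estimate from the unweighted one; it reruns the entire contradiction--compactness machinery with the renormalized sequence $v_{n}=\lambda_{n}^{-1}(1+t_{n}+\cdot)^{1/2}u_{n}(t_{n}+\cdot)$, verifying that the source term $f_{n}$ produced by differentiating the weight vanishes in $L^{2}$, recovering uniform energy bounds, and then invoking unique continuation and propagation of microlocal defect measures exactly as in Proposition \ref{proposition onservability global}; the extra term $\int_{t}^{t+T}\int_{\Omega}\left\vert u\right\vert^{2}dxds$ appears in the statement precisely because that proof needs the a priori bound $\lambda_{n}^{-2}\int\int\left\vert u_{n}\right\vert^{2}\leq 1/\delta$ to control $f_{n}$. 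Your proof instead exploits that on a window of fixed length $T$ the weight satisfies $1+t\leq 1+s\leq(1+T)(1+t)$, so the weighted inequality is a formal consequence of the unweighted Proposition \ref{proposition onservability global} applied with $\delta/(1+T)$ in place of $\delta$: all three steps (pulling the weight out as $1+t+T$, applying the unweighted estimate, pushing the weight back in term by term) are valid, the bookkeeping $\delta_{1}(1+T)=\delta$ is right, and you correctly observe that this yields the strictly stronger estimate without the $L^{2}$ term, which may then be added back since it is nonnegative. What your approach buys is economy and a slightly sharper statement; what the paper's approach buys is, at most, independence from Proposition \ref{proposition onservability global} --- but since that proposition is proved first and is used anyway in the proof of Theorem 2, nothing is lost by your shortcut.
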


\begin{proof}
To prove this result we argue by contradiction: If $\left( \ref%
{observability gradient 1+s}\right) $ was false, there would exist a
sequence of numbers $\left( t_{n}\right) $ and a sequence of solutions $%
\left( u_{n}\right) $ such that%
\begin{equation*}
\begin{array}{c}
\int_{t_{n}}^{t_{n}+T}\int_{\Omega }\chi ^{2}\left( x\right) \left(
1+s\right) \left\vert \nabla u_{n}\right\vert ^{2}dxds\geq n\left(
\int_{t_{n}}^{t_{n}+T}\int_{\Omega }a\left( x\right) \left( 1+s\right)
\left\vert \partial _{t}u_{n}\right\vert ^{2}dxds\right) \\ 
\text{ \ \ \ \ \ \ \ \ \ \ \ \ \ \ \ \ \ \ \ \ \ \ \ \ \ \ \ \ \ \ \ \ \ \ \
\ \ \ \ \ }+\delta \left( \left( 1+t_{n}\right) E_{u_{n}}\left( t_{n}\right)
+\int_{t_{n}}^{t_{n}+T}\int_{\Omega }\left\vert u_{n}\right\vert
^{2}dxds\right) .%
\end{array}%
\end{equation*}%
We may assume that $t_{n}\underset{n\rightarrow +\infty }{\longrightarrow }%
+\infty $ (if the sequence $t_{n}$ is bounded we can argue as in the proof
of proposition \ref{proposition onservability global}). Setting%
\begin{equation*}
\lambda _{n}^{2}=\int_{t_{n}}^{t_{n}+T}\int_{\Omega }\chi ^{2}\left(
x\right) \left( 1+s\right) \left\vert \nabla u_{n}\right\vert ^{2}dxds\text{
and }v_{n}=\frac{\left( 1+t_{n}+\cdot \right) ^{1/2}u_{n}\left( t_{n}+\cdot
\right) }{\lambda _{n}}.
\end{equation*}%
We have%
\begin{equation}
\frac{1}{\lambda _{n}^{2}}\left( 1+t_{n}\right) E_{u_{n}}\left( t_{n}\right)
\leq \frac{1}{\delta }\text{ and }\frac{1}{\lambda _{n}^{2}}%
\int_{0}^{T}\int_{\Omega }\left\vert u_{n}\left( t_{n}+s\right) \right\vert
^{2}dxds\leq \frac{1}{\delta },  \label{proof of proposition  contradiction}
\end{equation}%
moreover%
\begin{equation}
\frac{1}{\lambda _{n}^{2}}\int_{t_{n}}^{t_{n}+T}\int_{\Omega }a\left(
x\right) \left( 1+s\right) \left\vert \partial _{t}u_{n}\right\vert ^{2}dxds%
\underset{n\rightarrow +\infty }{\longrightarrow }0.
\label{proof proposition 5 1}
\end{equation}%
$v_{n}$ is a solution of the following system%
\begin{equation*}
\left\{ 
\begin{array}{lc}
\partial _{t}^{2}v_{n}-\Delta v_{n}+a\partial _{t}v_{n}=f_{n}\left(
t,x\right) & \text{in }%
\mathbb{R}
_{+}\times \Omega , \\ 
v_{n}\left( t,x\right) =0 & \text{on }%
\mathbb{R}
_{+}\times \Gamma , \\ 
\left( v_{n,0},v_{n,1}\right) \in H_{0}^{1}\left( \Omega \right) \times
L^{2}\left( \Omega \right) , & 
\end{array}%
\right.
\end{equation*}%
with%
\begin{eqnarray*}
f_{n}\left( t,x\right) &=&\frac{1}{2\lambda _{n}}\left( 1+t_{n}+t\right) ^{-%
\frac{1}{2}}\left( a\left( x\right) -\frac{1}{2}\left( 1+t_{n}+t\right)
^{-1}\right) u_{n}\left( t_{n}+t\right) \\
&&+\frac{1}{\lambda _{n}}\left( 1+t_{n}+t\right) ^{-\frac{1}{2}}\partial
_{t}u_{n}\left( t_{n}+t\right) .
\end{eqnarray*}%
It is clear that $\left( \ref{proof of proposition contradiction}\right) $,
gives%
\begin{eqnarray*}
&&\int_{0}^{T}\int_{\Omega }\left\vert \frac{1}{2\lambda _{n}}\left(
1+t_{n}+t\right) ^{-\frac{1}{2}}\left( a\left( x\right) -\frac{1}{2}\left(
1+t_{n}+t\right) ^{-1}\right) u_{n}\left( t_{n}+t\right) \right\vert ^{2}dxdt
\\
&\leq &C\left( 1+t_{n}\right) ^{-1}\frac{1}{\lambda _{n}^{2}}%
\int_{t_{n}}^{t_{n}+T}\int_{\Omega }\left\vert u_{n}\left( s\right)
\right\vert ^{2}dxds \\
&\leq &C\frac{\left( 1+t_{n}\right) ^{-1}}{\delta }
\end{eqnarray*}%
and%
\begin{eqnarray*}
\int_{0}^{T}\int_{\Omega }\left\vert \frac{1}{\lambda _{n}}\left(
1+t_{n}+t\right) ^{-\frac{1}{2}}\partial _{t}u_{n}\left( t_{n}+t\right)
\right\vert ^{2}dxdt &\leq &\frac{\left( 1+t_{n}\right) ^{-1}}{\lambda
_{n}^{2}}\int_{0}^{T}\int_{\Omega }\left\vert \partial _{t}u_{n}\left(
t_{n}+t\right) \right\vert ^{2}dxdt \\
&\leq &\frac{\left( 1+t_{n}\right) ^{-1}}{\lambda _{n}^{2}}2TE_{u_{n}}\left(
t_{n}\right) \\
&\leq &\frac{2T\left( 1+t_{n}\right) ^{-2}}{\delta }.
\end{eqnarray*}%
We conclude that%
\begin{equation}
\int_{0}^{T}\int_{\Omega }\left\vert f_{n}\left( s,x\right) \right\vert
^{2}dxds\leq C\frac{\left( 1+t_{n}\right) ^{-1}}{\delta }\underset{%
n\rightarrow +\infty }{\longrightarrow }0.
\label{proof of proposition 5 fn converge}
\end{equation}%
$\left( \ref{proof of proposition contradiction}\right) $ gives%
\begin{eqnarray*}
\int_{0}^{T}\int_{\Omega }a\left( x\right) \left\vert \partial
_{t}v_{n}\right\vert ^{2}dxdt &\leq &\left\Vert a\right\Vert _{\infty
}\left( 1+t_{n}\right) ^{-1}\frac{1}{\lambda _{n}^{2}}\int_{t_{n}}^{t_{n}+T}%
\int_{\Omega }\left\vert u_{n}\left( s\right) \right\vert ^{2}dxds \\
&&+\frac{2}{\lambda _{n}^{2}}\int_{t_{n}}^{t_{n}+T}\int_{\Omega }a\left(
x\right) \left( 1+s\right) \left\vert \partial _{t}u_{n}\right\vert ^{2}dxds
\\
&\leq &\frac{\left\Vert a\right\Vert _{\infty }\left( 1+t_{n}\right) ^{-1}}{%
\delta }+\frac{2}{\lambda _{n}^{2}}\int_{t_{n}}^{t_{n}+T}\int_{\Omega
}a\left( x\right) \left( 1+s\right) \left\vert \partial _{t}u_{n}\right\vert
^{2}dxds.
\end{eqnarray*}%
Now using $\left( \ref{proof proposition 5 1}\right) ,$ we deduce that%
\begin{equation}
\int_{0}^{T}\int_{\Omega }a\left( x\right) \left\vert \partial
_{t}v_{n}\right\vert ^{2}dxdt\underset{n\rightarrow +\infty }{%
\longrightarrow }0.  \label{proof of proposition 5 adtun converge}
\end{equation}%
We multiply the equation satisfied by $u_{n}$ by $\left( 1+t\right) \partial
_{t}u_{n}$\ and integrating between $t_{n}$ and $t_{n}+t,$ we obtain%
\begin{equation*}
\left( 1+t_{n}+t\right) E_{u_{n}}\left( t_{n}+t\right) -\left(
1+t_{n}\right) E_{u_{n}}\left( t_{n}\right)
=\int_{t_{n}}^{t_{n}+t}E_{u_{n}}\left( s\right)
ds-\int_{t_{n}}^{t_{n}+T}\int_{\Omega }a\left( x\right) \left( 1+s\right)
\left\vert \partial _{t}u_{n}\right\vert ^{2}dxds
\end{equation*}%
thus by using $\left( \ref{proof of proposition contradiction}\right) ,$ we
infer that%
\begin{eqnarray*}
\frac{1}{\lambda _{n}^{2}}\left( 1+t_{n}+t\right) E_{u_{n}}\left(
t_{n}+t\right) &\leq &\frac{1}{\lambda _{n}^{2}}\left( 1+t_{n}\right)
E_{u_{n}}\left( t_{n}\right) +\frac{1}{\lambda _{n}^{2}}%
\int_{t_{n}}^{t_{n}+t}E_{u_{n}}\left( s\right) ds \\
&\leq &\frac{1}{\delta }+\frac{T}{\delta }\text{, for all }t\in \left[ 0,T%
\right] .
\end{eqnarray*}%
\ On the other hand, we have%
\begin{eqnarray*}
\int_{0}^{T}E_{v_{n}}\left( t\right) dt &\leq &\frac{1}{\lambda _{n}^{2}}%
\int_{0}^{T}\left[ \left( 1+t_{n}+t\right) E_{u_{n}}\left( t_{n}+t\right)
+\left( 1+t_{n}+t\right) ^{-1}\int_{\Omega }\left\vert u_{n}\left(
t_{n}+t\right) \right\vert ^{2}dx\right] dt \\
&\leq &\frac{C_{T}}{\delta }.
\end{eqnarray*}%
Let $Z_{n}$ be the solution of the following system%
\begin{equation*}
\left\{ 
\begin{array}{ll}
\partial _{t}^{2}Z_{n}-\Delta Z_{n}=0 & 
\mathbb{R}
_{+}\times \Omega , \\ 
Z_{n}=0 & 
\mathbb{R}
_{+}\times \Gamma , \\ 
\left( Z_{n}\left( 0\right) ,\partial _{t}Z_{n}\left( 0\right) \right)
=\left( v_{n}\left( 0\right) ,\partial _{t}v_{n}\left( 0\right) \right) . & 
\end{array}%
\right.
\end{equation*}%
The hyperbolic energy inequality gives%
\begin{equation*}
\underset{\left[ 0,T\right] }{\sup }E_{v_{n}-Z_{n}}\left( s\right) \leq
C_{T}\left\Vert a\left( x\right) \partial _{t}v_{n}+f_{n}\left( t,x\right)
\right\Vert _{L^{2}\left( \left[ 0,T\right] ,L^{2}\left( \Omega \right)
\right) }^{2}
\end{equation*}%
Now using $\left( \ref{proof of proposition 5 adtun converge}\right) $ and $%
\left( \ref{proof of proposition 5 fn converge}\right) ,$ we deduce that%
\begin{equation*}
\underset{\left[ 0,T\right] }{\sup }E_{v_{n}-Z_{n}}\left( s\right) \underset{%
n\rightarrow +\infty }{\longrightarrow }0.
\end{equation*}%
\ Using the estimate above, we obtain%
\begin{eqnarray*}
TE_{v_{n}}\left( 0\right) &=&TE_{Z_{n}}\left( 0\right)
=\int_{0}^{T}E_{Z_{n}}\left( t\right) dt \\
&\leq &2T\underset{\left[ 0,T\right] }{\sup }E_{v_{n}-Z_{n}}\left( s\right)
+2\int_{0}^{T}E_{v_{n}}\left( t\right) dt \\
&\leq &C_{T,\delta }
\end{eqnarray*}%
this gives%
\begin{equation*}
\underset{\left[ 0,T\right] }{\sup }E_{Z_{n}}\left( s\right)
=E_{v_{n}}\left( 0\right) \leq C_{T,\delta }.
\end{equation*}%
Using the result above and ( \cite[proposition 2]{daou}), we infer that%
\begin{eqnarray*}
E_{v_{n}}\left( t\right) &\leq &2e^{T}\left( E_{v_{n}}\left( 0\right)
+\int_{0}^{T}\int_{\Omega }\left\vert f_{n}\left( \tau ,x\right) \right\vert
^{2}dxd\tau \right) \text{ for }0\leq t\leq T \\
&\leq &C_{T,\delta },\text{ for all }0\leq t\leq T.
\end{eqnarray*}%
To complete the proof we have only to argue as in the proof of the
proposition \ref{proposition onservability global}.
\end{proof}

As a corollary we have

\begin{corollary}
We assume that Hyp A holds and $(\omega $,$T)$ geometrically controls $%
\Omega $. Let $\delta $,$R>0$. There exists $C_{T,\delta ,R}>0$, such that
the following inequality%
\begin{equation}
\begin{array}{c}
\int_{t}^{t+T}\int_{\Omega \cap B_{R}}\left( 1+s\right) \left\vert \nabla
u\right\vert ^{2}dxds\leq C_{T,\delta ,R}\left( \int_{t}^{t+T}\int_{\Omega
}a\left( x\right) \left( 1+s\right) \left\vert \partial _{t}u\right\vert
^{2}dxds\right) \\ 
\text{ \ \ \ \ \ \ \ \ \ \ \ \ \ \ \ \ \ \ \ \ \ \ \ \ \ \ \ \ \ \ \ \ \ \ \
\ \ \ \ \ \ \ \ \ }+\delta \left( \left( 1+t\right) E_{u}\left( t\right)
+\int_{t}^{t+T}\int_{\Omega }\left\vert u\right\vert ^{2}dxds\right) .%
\end{array}
\label{observability gradient 1+s 11}
\end{equation}%
holds for every $t\geq 0$ and for all $u$ solution of $\left( \ref{system}%
\right) $ with initial data $\left( u_{0},u_{1}\right) $ in $H$.
\end{corollary}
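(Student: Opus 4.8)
The plan is to obtain this corollary as an immediate specialization of Proposition \ref{proposition onservability global copy(1)}, by inserting a smooth cutoff adapted to the ball $B_{R}$.

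First I would fix $\chi \in C_{0}^{\infty }\left( \mathbb{R}^{d}\right) $ with $0\leq \chi \leq 1$, with $\chi \equiv 1$ on $B_{R}$ and vanishing outside $B_{2R}$. With this choice $\chi ^{2}\left( x\right) =1$ for $x\in B_{R}$, so the characteristic function of $\Omega \cap B_{R}$ is dominated pointwise by $\chi ^{2}$. Consequently
\[
\int_{t}^{t+T}\int_{\Omega \cap B_{R}}\left( 1+s\right) \left\vert \nabla u\right\vert ^{2}dxds\leq \int_{t}^{t+T}\int_{\Omega }\chi ^{2}\left( x\right) \left( 1+s\right) \left\vert \nabla u\right\vert ^{2}dxds.
\]

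Then I would apply the estimate $\left( \ref{observability gradient 1+s}\right) $ of Proposition \ref{proposition onservability global copy(1)} with this particular $\chi $, and simply set $C_{T,\delta ,R}:=C_{T,\delta ,\chi }$. Since $\chi $ has been chosen to depend only on $R$, the resulting constant depends only on $T$, $\delta $ and $R$, which is exactly $\left( \ref{observability gradient 1+s 11}\right) $.

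There is essentially no obstacle here: all the analytic content---the contradiction argument, the construction of the microlocal defect measure and its propagation along the geodesic flow, together with the weighted energy identity used to control the lower-order terms---is already carried out in Proposition \ref{proposition onservability global copy(1)}. The corollary merely records the fact that the sharp restriction to $\Omega \cap B_{R}$ is bounded above by any smooth cutoff equal to $1$ on $B_{R}$, so no further estimation is required.
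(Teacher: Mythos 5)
Your proof is correct and matches the paper's (implicit) argument: the paper states this result as an immediate corollary of Proposition \ref{proposition onservability global copy(1)}, obtained exactly by choosing a cutoff $\chi$ equal to $1$ on $B_{R}$ and noting that the indicator of $\Omega \cap B_{R}$ is dominated by $\chi ^{2}$. No further comment is needed.
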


In the sequel we need the following result

\begin{lemma}
Let $\psi \in C_{0}^{\infty }\left( 
\mathbb{R}
^{d}\right) $ such that $0\leq \psi \leq 1$ and%
\begin{equation*}
\psi \left( x\right) =\left\{ 
\begin{array}{ll}
1 & \text{for }\left\vert x\right\vert \leq L \\ 
0 & \text{for }\left\vert x\right\vert \geq 2L%
\end{array}%
\right.
\end{equation*}%
Setting $w=\psi u$ and $v=\left( 1-\psi \right) u$ where $u$ is a solution
of $\left( \ref{system}\right) $ with initial data in $H_{0}^{1}\left(
\Omega \right) \times L^{2}\left( \Omega \right) $. Let%
\begin{equation*}
X\left( t\right) =\int_{\Omega }v\left( t\right) \partial _{t}v\left(
t\right) dx+\frac{1}{2}\int_{\Omega }a\left( x\right) \left\vert v\left(
t\right) \right\vert ^{2}dx+kE_{u}\left( t\right) .
\end{equation*}%
where $k$ is a positive constant. We have%
\begin{equation}
\begin{array}{l}
\left( 1+t+T\right) X\left( t+T\right) -\left( 1+t\right) X\left( t\right)
+\int_{t}^{t+T}\left( 1+s\right) E_{u}\left( s\right) ds \\ 
+\left( k-\frac{2}{\epsilon _{0}}\right) \int_{t}^{t+T}\int_{\Omega }a\left(
1+s\right) \left\vert \partial _{t}u\right\vert ^{2}dxds \\ 
\leq 2\int_{t}^{t+T}\left( 1+s\right) E_{w}\left( s\right)
ds+\int_{t}^{t+T}\int_{\Omega }\left( 1+s\right) \left\vert \nabla \psi
\right\vert ^{2}\left\vert u\right\vert ^{2}dxds+\int_{t}^{t+T}X\left(
s\right) ds.%
\end{array}
\label{1+t Xt estimate}
\end{equation}
\end{lemma}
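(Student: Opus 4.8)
The plan is to obtain the weighted estimate $\left( \ref{1+t Xt estimate}\right) $ by multiplying by the weight $(1+s)$ the pointwise differential inequality that was already established in the proof of the preceding lemma, and then integrating over $\left[ t,t+T\right] $. Recall that, just before integrating, the proof of that lemma produced the pointwise bound $\left( \ref{Xt derivetive estimate}\right) $, namely
\begin{equation*}
\frac{d}{ds}X\left( s\right) +E_{u}\left( s\right) +\left( k-\frac{2}{\epsilon _{0}}\right) \int_{\Omega }a\left( x\right) \left\vert \partial _{t}u\left( s\right) \right\vert ^{2}dx\leq 2E_{w}\left( s\right) +\int_{\Omega }\left\vert \nabla \psi \right\vert ^{2}\left\vert u\right\vert ^{2}dx,
\end{equation*}
valid for every $s\geq 0$. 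Exactly as in that proof, by the density of smooth data in $H_{0}^{1}\left( \Omega \right) \times L^{2}\left( \Omega \right) $ together with the uniform convergence on compact time intervals, it suffices to treat smooth solutions, for which all the quantities involved are differentiable in $s$ and the integration by parts performed below is fully justified.

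First I would multiply $\left( \ref{Xt derivetive estimate}\right) $ by the nonnegative weight $\left( 1+s\right) $, which preserves the inequality, obtaining
\begin{equation*}
\left( 1+s\right) \frac{d}{ds}X\left( s\right) +\left( 1+s\right) E_{u}\left( s\right) +\left( k-\frac{2}{\epsilon _{0}}\right) \left( 1+s\right) \int_{\Omega }a\left( x\right) \left\vert \partial _{t}u\right\vert ^{2}dx\leq 2\left( 1+s\right) E_{w}\left( s\right) +\left( 1+s\right) \int_{\Omega }\left\vert \nabla \psi \right\vert ^{2}\left\vert u\right\vert ^{2}dx.
\end{equation*}
Then I would integrate this in $s$ over $\left[ t,t+T\right] $. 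The only term requiring care is the one carrying the derivative of $X$, for which an integration by parts gives
\begin{equation*}
\int_{t}^{t+T}\left( 1+s\right) \frac{d}{ds}X\left( s\right) ds=\left( 1+t+T\right) X\left( t+T\right) -\left( 1+t\right) X\left( t\right) -\int_{t}^{t+T}X\left( s\right) ds.
\end{equation*}
Substituting this identity into the integrated inequality and transferring the term $-\int_{t}^{t+T}X\left( s\right) ds$ to the right-hand side yields precisely $\left( \ref{1+t Xt estimate}\right) $.

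The argument is essentially mechanical, so I do not expect any serious obstacle; the only point worth stressing is that the weight $\left( 1+s\right) $ generates, through the integration by parts, the extra term $\int_{t}^{t+T}X\left( s\right) ds$ on the right-hand side. This term is exactly what is needed afterwards to run the induction on the decay order, once $X\left( s\right) $ is controlled above and below by $E_{u}\left( s\right) $ and $\int_{\Omega }\left\vert v\right\vert ^{2}dx$ through the two-sided bounds on $X$ recorded in $\left( \ref{k estimate}\right) $.
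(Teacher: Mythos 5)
Your proposal is correct and follows essentially the same route as the paper: the paper likewise reduces to smooth solutions, multiplies the pointwise bound $\left( \ref{Xt derivetive estimate}\right)$ by $1+t$, rewrites $\left( 1+t\right) \frac{d}{dt}X$ as $\frac{d}{dt}\left[ \left( 1+t\right) X\right] -X$, and integrates over $\left[ t,t+T\right]$, which is exactly your integration by parts producing the extra $\int_{t}^{t+T}X\left( s\right) ds$ term.
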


\begin{proof}
We may assume that $u$ is smooth. According to $\left( \ref{Xt derivetive
estimate}\right) $ 
\begin{equation*}
\frac{d}{dt}X\left( t\right) +E_{u}\left( t\right) +\left( k-\frac{2}{%
\epsilon _{0}}\right) \int_{\Omega }a\left( x\right) \left\vert \partial
_{t}u\left( t\right) \right\vert ^{2}dx\leq 2E_{w}\left( t\right)
+\int_{\Omega }\left\vert \nabla \psi \right\vert ^{2}\left\vert
u\right\vert ^{2}dx.
\end{equation*}%
We multiply the estimate above by $1+t$, we obtain%
\begin{eqnarray*}
&&\frac{d}{dt}\left( 1+t\right) X\left( t\right) +\left( 1+t\right)
E_{u}\left( t\right) +\left( k-\frac{2}{\epsilon _{0}}\right) \int_{\Omega
}a\left( x\right) \left( 1+t\right) \left\vert \partial _{t}u\left( t\right)
\right\vert ^{2}dx \\
&\leq &2\left( 1+t\right) E_{w}\left( t\right) +\left( 1+t\right)
\int_{\Omega }\left\vert \nabla \psi \right\vert ^{2}\left\vert u\right\vert
^{2}dx+X\left( t\right) .
\end{eqnarray*}%
We integrate the inequality above between $t$ and $t+T$, we obtain $\left( %
\ref{1+t Xt estimate}\right) $.
\end{proof}

\subsection{Proof of Theorem 2}

In the sequel $C,$ $C_{T}$ and $C_{T,\delta }$ denote a generic positive
constants and any changes from one derivation to the next will not be
explicitly outlined.

Let $\psi \in C_{0}^{\infty }\left( 
\mathbb{R}
^{d}\right) $ such that $0\leq \psi \leq 1$ and%
\begin{equation*}
\psi \left( x\right) =\left\{ 
\begin{array}{ll}
1 & \text{for }\left\vert x\right\vert \leq L \\ 
0 & \text{for }\left\vert x\right\vert \geq 2L%
\end{array}%
\right.
\end{equation*}%
Setting $w=\psi u$ and $v=\left( 1-\psi \right) u$ where $u$ is a solution
of $\left( \ref{system}\right) $ with initial data in $H_{0}^{1}\left(
\Omega \right) \times L^{2}\left( \Omega \right) $ such that%
\begin{equation*}
\left\Vert d\left( \cdot \right) \left( u_{1}+au_{0}\right) \right\Vert
_{L^{2}}<+\infty .
\end{equation*}
Let%
\begin{equation*}
X\left( t\right) =\int_{\Omega }v\left( t\right) \partial _{t}v\left(
t\right) dx+\frac{1}{2}\int_{\Omega }a\left( x\right) \left\vert v\left(
t\right) \right\vert ^{2}dx+kE_{u}\left( t\right) ,
\end{equation*}%
where $k$ is a positive constant. According to $\left( \ref{1+t Xt estimate}%
\right) $ we have%
\begin{eqnarray*}
&&\left( 1+t+T\right) X\left( t+T\right) -\left( 1+t\right) X\left( t\right)
+\int_{t}^{t+T}\left( 1+s\right) E_{u}\left( s\right) ds \\
&&+\left( k-\frac{2}{\epsilon _{0}}\right) \int_{t}^{t+T}\int_{\Omega
}a\left( 1+s\right) \left\vert \partial _{t}u\right\vert ^{2}dxds \\
&\leq &2\int_{t}^{t+T}\left( 1+s\right) E_{w}\left( s\right)
ds+\int_{t}^{t+T}\int_{\Omega }\left( 1+s\right) \left\vert \nabla \psi
\right\vert ^{2}\left\vert u\right\vert ^{2}dxds+\int_{t}^{t+T}X\left(
s\right) ds.
\end{eqnarray*}%
Let $w_{1}=\left( 1+t\right) ^{\frac{1}{2}}w$. Then $w_{1}$ is a solution of 
\begin{equation*}
\left\{ 
\begin{array}{ll}
\partial _{t}^{2}w_{1}-\Delta w_{1}+a\left( x\right) \partial _{t}w_{1}=%
\frac{1}{2}\left( 1+t\right) ^{-\frac{1}{2}}\left( a\left( x\right) -\frac{1%
}{2}\left( 1+t\right) ^{-1}\right) w+f\left( t,x\right) & 
\mathbb{R}
_{+}\times M, \\ 
w_{1}=0 & 
\mathbb{R}
_{+}\times \partial M, \\ 
\left( w_{1}\left( 0\right) ,\partial _{t}w_{1}\left( 0\right) \right)
=\left( \psi u_{0},\psi u_{1}\right) , & 
\end{array}%
\right.
\end{equation*}%
with $M=\Omega \cap B_{2L}$ and 
\begin{equation*}
f\left( t,x\right) =\left( 1+t\right) ^{-\frac{1}{2}}\partial _{t}w-\left(
1+t\right) ^{\frac{1}{2}}\left( 2\nabla \psi \nabla u+u\Delta \psi \right) .
\end{equation*}%
We have%
\begin{equation*}
\partial _{t}w_{1}=\frac{1}{2}\left( 1+t\right) ^{-\frac{1}{2}}w+\left(
1+t\right) ^{\frac{1}{2}}\partial _{t}w
\end{equation*}%
therefore, using Poincare's inequality we conclude that there exists a
positive constant $c$ such that%
\begin{equation}
\left( 1+t\right) E_{w}\left( t\right) \leq cE_{w_{1}}\left( t\right) \text{%
, for all }t\geq 0.  \label{proof of theo Ew bound}
\end{equation}%
Moreover we have%
\begin{equation*}
\int_{t}^{t+T}\int_{\Omega }a\left\vert \partial _{t}w_{1}\right\vert
^{2}dxds\leq \left( 1+\left\Vert a\right\Vert _{\infty }\right)
\int_{t}^{t+T}\int_{\Omega }a\left( 1+s\right) \left\vert \partial
_{t}w\right\vert ^{2}+\left\vert w\right\vert ^{2}dxds.
\end{equation*}%
$\left( \psi u_{0},\psi u_{1}\right) \in H_{0}^{1}\left( M\right) \times
L^{2}\left( M\right) $ and 
\begin{equation*}
\frac{1}{2}\left( 1+t\right) ^{-\frac{1}{2}}\left( a\left( x\right) -\frac{1%
}{2}\left( 1+t\right) ^{-1}\right) w+f\left( t,x\right) \in
L_{loc}^{2}\left( 
\mathbb{R}
_{+},L^{2}\left( M\right) \right) .
\end{equation*}%
Therefore using $\left( \ref{observability bounded}\right) $, we infer that%
\begin{eqnarray}
E_{w_{1}}\left( t\right) &\leq &C_{T}^{1}\left( \int_{t}^{t+T}\int_{\Omega
}a\left( 1+s\right) \left\vert \partial _{t}w_{1}\right\vert ^{2}+\left\vert
f\left( s,x\right) \right\vert ^{2}dxds\right)  \notag \\
&\leq &C_{T}^{1}\left( \int_{t}^{t+T}\int_{\Omega }a\left( 1+s\right)
\left\vert \partial _{t}w\right\vert ^{2}+\left\vert f\left( s,x\right)
\right\vert ^{2}+\left\vert w\right\vert ^{2}dxds\right) ,
\label{proof of theo Ew1 bound}
\end{eqnarray}%
holds for every $t\geq 0$. We have (cf, \cite[proposition 2]{daou})%
\begin{eqnarray*}
E_{w_{1}}\left( s\right) &\leq &4e^{s-t}\left( E_{w_{1}}\left( t\right)
+\int_{t}^{s}\int_{\Omega }\left\vert f\left( s,x\right) \right\vert
^{2}+\left\vert w\right\vert ^{2}dxds\right) \\
&\leq &4e^{T}\left( E_{w_{1}}\left( t\right) +\int_{t}^{t+T}\int_{\Omega
}\left\vert f\left( s,x\right) \right\vert ^{2}+\left\vert w\right\vert
^{2}dxds\right) ,\text{ for }t\leq s\leq t+T.
\end{eqnarray*}%
Using the estimate above, $\left( \ref{proof of theo Ew bound}\right) $ and $%
\left( \ref{proof of theo Ew1 bound}\right) ,$ we get%
\begin{equation*}
\int_{t}^{t+T}\left( 1+s\right) E_{w}\left( s\right) ds\leq \tilde{C}%
_{T}\left( \int_{t}^{t+T}\int_{\Omega }a\left( 1+s\right) \left\vert
\partial _{t}w\right\vert ^{2}+\left\vert f\left( s,x\right) \right\vert
^{2}+\left\vert w\right\vert ^{2}dxds\right) .
\end{equation*}%
Which yields to%
\begin{equation}
\int_{t}^{t+T}\left( 1+s\right) E_{w}\left( s\right) ds\leq \tilde{C}%
_{T}\left( \int_{t}^{t+T}\int_{\Omega }a\left( 1+s\right) \left\vert
\partial _{t}u\right\vert ^{2}+\left\vert f\left( s,x\right) \right\vert
^{2}+\left\vert w\right\vert ^{2}dxds\right) .
\label{proof of theorem 2 estimate1}
\end{equation}%
Now we estimate the second and the third term of the RHS of the estimate
above. For second term it is clear that%
\begin{equation}
\int_{t}^{t+T}\int_{\Omega }\left\vert f\left( s,x\right) \right\vert
^{2}dxds\leq C_{L}\int_{t}^{t+T}\int_{\Omega \cap B_{2L}}\left( 1+s\right)
\left\vert \nabla u\right\vert ^{2}+\left( 1+s\right) ^{-1}\left\vert
\partial _{t}u\right\vert ^{2}dxds,  \label{proof of theorem 2 f bound}
\end{equation}%
for some $C_{L}>0$. Using $\left( \ref{observability gradient 1+s 11}\right) 
$, we infer that%
\begin{eqnarray*}
\int_{t}^{t+T}\int_{\Omega \cap B_{2L}}\left( 1+s\right) \left\vert \nabla
u\right\vert ^{2}dxds &\leq &C_{T,\delta }\left( \int_{t}^{t+T}\int_{\Omega
}a\left( 1+s\right) \left\vert \partial _{t}u\right\vert ^{2}dxds\right) \\
&&+C_{T}\delta \left( \int_{t}^{t+T}\int_{\Omega }\left\vert u\right\vert
^{2}dxds+\left( 1+t\right) E_{u}\left( t\right) \right) .
\end{eqnarray*}%
For the second term of the RHS of $\left( \ref{proof of theorem 2 f bound}%
\right) ,$ we use $\left( \ref{observability gradient}\right) $ 
\begin{equation*}
\begin{array}{l}
\int_{t}^{t+T}\int_{\Omega \cap B_{2L}}\left( 1+s\right) ^{-1}\left\vert
\partial _{t}u\right\vert ^{2}dxds \\ 
\leq C_{T,\delta }\left( \int_{t}^{t+T}\int_{\Omega }a\left( x\right)
\left\vert \partial _{t}u\right\vert ^{2}dxds\right) +\delta E_{u}\left(
t\right) \\ 
\leq C_{T,\delta }\left( \int_{t}^{t+T}\int_{\Omega }a\left( 1+s\right)
\left\vert \partial _{t}u\right\vert ^{2}dxds\right) +C_{T}\delta \left(
\int_{t}^{t+T}\int_{\Omega }\left\vert u\right\vert ^{2}dxds+\left(
1+t\right) E_{u}\left( t\right) \right) ,%
\end{array}%
\end{equation*}%
since 
\begin{equation*}
\left( 1+t\right) E_{u}\left( t\right) \leq \int_{t}^{t+T}\int_{\Omega
}a\left( 1+s\right) \left\vert \partial _{t}u\right\vert ^{2}dxds+\frac{1}{T}%
\int_{t}^{t+T}\left( 1+s\right) E_{u}\left( s\right) ds,
\end{equation*}%
then%
\begin{equation}
\begin{array}{c}
\int_{t}^{t+T}\int_{\Omega }\left\vert f\left( s,x\right) \right\vert
^{2}dxds\leq C_{T,\delta }\left( \int_{t}^{t+T}\int_{\Omega }a\left(
1+s\right) \left\vert \partial _{t}u\right\vert ^{2}dxds\right) \\ 
\text{ \ \ \ \ \ \ \ \ \ \ \ \ \ \ \ \ }+C_{T}\delta \left(
\int_{t}^{t+T}\int_{\Omega }\left\vert u\right\vert
^{2}dxds+\int_{t}^{t+T}\left( 1+s\right) E_{u}\left( s\right) ds\right) .%
\end{array}
\label{estimate 1+T f}
\end{equation}%
To estimate the third term of the RHS of $\left( \ref{proof of theorem 2
estimate1}\right) $ we use $\left( \ref{observability gradient}\right) $ and
we obtain\ 
\begin{equation}
\begin{array}{l}
\int_{t}^{t+T}\int_{\Omega }\left\vert w\right\vert ^{2}dxds \\ 
\leq C_{T,\delta }\left( \int_{t}^{t+T}\int_{\Omega }a\left( x\right)
\left\vert \partial _{t}u\right\vert ^{2}dxds\right) +\delta E_{u}\left(
t\right) \\ 
\leq C_{T,\delta }\left( \int_{t}^{t+T}\int_{\Omega }a\left( 1+s\right)
\left\vert \partial _{t}u\right\vert ^{2}dxds\right) +C_{T}\delta \left(
\int_{t}^{t+T}\int_{\Omega }\left\vert u\right\vert
^{2}dxds+\int_{t}^{t+T}\left( 1+s\right) E_{u}\left( s\right) ds\right) .%
\end{array}
\label{estimate 1+T u}
\end{equation}%
Combining $\left( \ref{estimate 1+T f}\right) $ and $\left( \ref{estimate
1+T u}\right) $, we get%
\begin{equation*}
\begin{array}{l}
\int_{t}^{t+T}\left( 1+s\right) E_{w}\left( s\right) ds \\ 
\leq C_{T,\delta }\left( \int_{t}^{t+T}\int_{\Omega }a\left( 1+s\right)
\left\vert \partial _{t}u\right\vert ^{2}dxds\right) +C_{T}\delta \left(
\int_{t}^{t+T}\int_{\Omega }\left\vert u\right\vert
^{2}dxds+\int_{t}^{t+T}\left( 1+s\right) E_{u}\left( s\right) ds\right) .%
\end{array}%
\end{equation*}%
On the other hand, using $\left( \ref{observability gradient 1+s 11}\right) $
we infer that%
\begin{eqnarray*}
&&\int_{t}^{t+T}\int_{\Omega }\left( 1+s\right) \left\vert \nabla \psi
\right\vert ^{2}\left\vert u\right\vert ^{2}dxds \\
&\leq &C_{T,\delta }\left( \int_{t}^{t+T}\int_{\Omega }a\left( 1+s\right)
\left\vert \partial _{t}u\right\vert ^{2}dxds\right) +C_{T}\delta \left(
\int_{t}^{t+T}\int_{\Omega }\left\vert u\right\vert
^{2}dxds+\int_{t}^{t+T}\left( 1+s\right) E_{u}\left( s\right) ds\right) .
\end{eqnarray*}%
Now $\left( \ref{1+t Xt estimate}\right) $ and the two estimates above gives%
\begin{equation*}
\begin{array}{l}
\left( 1+t+T\right) X\left( t+T\right) -\left( 1+t\right) X\left( t\right)
+\left( 1-3C_{T}\delta \right) \int_{t}^{t+T}\left( 1+s\right) E_{u}\left(
s\right) ds \\ 
+\left( k-\frac{2}{\epsilon _{0}}-3C_{T,\delta }\right)
\int_{t}^{t+T}\int_{\Omega }a\left( 1+s\right) \left\vert \partial
_{t}u\right\vert ^{2}dxds \\ 
\leq \int_{t}^{t+T}X\left( s\right) ds+C_{T}\delta
\int_{t}^{t+T}\int_{\Omega }\left\vert u\right\vert ^{2}dxds.%
\end{array}%
\end{equation*}%
We have%
\begin{eqnarray}
X\left( t\right) &\leq &\frac{3}{2}\left\Vert a\right\Vert _{\infty
}\int_{\Omega }\left\vert v\left( t\right) \right\vert ^{2}dx+\left( k+\frac{%
2}{\epsilon _{0}}\right) E_{u}\left( t\right) \text{ and}
\label{proof of theo 2 X t upper bound} \\
X\left( t\right) &\geq &\frac{\epsilon _{0}}{4}\int_{\Omega }\left\vert
v\left( t\right) \right\vert ^{2}dx+\left( k-\frac{8}{\epsilon _{0}}\right)
E_{u}\left( t\right) .  \label{proof of theore 2 Xt lower bound}
\end{eqnarray}%
We choose $\delta $ and $k$ such that 
\begin{align*}
1-3C_{T}\delta & =\frac{1}{2}, \\
k-\frac{2}{\epsilon _{0}}-3C_{T,\delta }& \geq \epsilon >0\text{ and }k-%
\frac{8}{\epsilon _{0}}\geq \epsilon .
\end{align*}%
Thus we get%
\begin{equation}
\begin{array}{l}
\left( 1+t+T\right) X\left( t+T\right) -\left( 1+t\right) X\left( t\right) +%
\frac{1}{2}\int_{t}^{t+T}\left( 1+s\right) E_{u}\left( s\right) ds \\ 
+\epsilon \int_{t}^{t+T}\int_{\Omega }a\left( 1+s\right) \left\vert \partial
_{t}u\right\vert ^{2}dxds \\ 
\leq \int_{t}^{t+T}X\left( s\right) ds+\frac{1}{6}\int_{t}^{t+T}\int_{\Omega
}\left\vert u\right\vert ^{2}dxds.%
\end{array}
\label{1+t Xt estimate final}
\end{equation}%
As in the proof of theorem 1, from the estimate above, we deduce that%
\begin{equation}
\begin{array}{l}
\underset{%
\mathbb{R}
_{+}}{\sup }\left( \left( 1+t\right) X\left( t\right) \right) +\frac{1}{2}%
\int_{0}^{+\infty }\left( 1+s\right) E_{u}\left( s\right) ds \\ 
\leq C\left( X\left( 0\right) +\int_{0}^{+\infty }X\left( s\right)
ds+\int_{0}^{+\infty }\int_{\Omega }\left\vert u\right\vert ^{2}dxds\right) .%
\end{array}
\label{proof of theo 2 Xt bound 1}
\end{equation}

Using the fact that $1-\psi \equiv 1$ for $\left\vert x\right\vert \geq 2L$,
we obtain%
\begin{equation*}
\underset{%
\mathbb{R}
_{+}}{\sup }\left( 1+t\right) \int_{\left\{ \left\vert x\right\vert \geq
2L\right\} }\left\vert u\left( t\right) \right\vert ^{2}dx\leq C\left(
X\left( 0\right) +\int_{0}^{+\infty }X\left( s\right) ds+\int_{0}^{+\infty
}\int_{\Omega }\left\vert u\right\vert ^{2}dxds\right) .
\end{equation*}%
On the other hand, using $\left( \ref{theorem 1 energy result}\right) ,$ we
obtain%
\begin{eqnarray*}
\left( 1+t\right) \int_{\Omega \cap B_{2L}}\left\vert u\left( t\right)
\right\vert ^{2}dx &\leq &C_{L}\left( 1+t\right) \int_{\Omega }\left\vert
\nabla u\left( t\right) \right\vert ^{2}dx \\
&\leq &C_{L}\left( 1+t\right) E_{u}\left( t\right) \\
&\leq &CI_{0}.
\end{eqnarray*}%
Combining the estimates above we deduce that%
\begin{equation}
\begin{array}{c}
\underset{%
\mathbb{R}
_{+}}{\sup }\left( 1+t\right) \int_{\Omega }\left\vert u\left( t\right)
\right\vert ^{2}dx\leq C\left( X\left( 0\right) +\int_{0}^{+\infty }X\left(
s\right) ds+\int_{0}^{+\infty }\int_{\Omega }\left\vert u\right\vert
^{2}dxds+I_{0}\right) .%
\end{array}
\label{1+t l2 u norm}
\end{equation}%
Now we need the following result due to Ikehata \cite[lemma 2.5]{ikehata}

\begin{lemma}
Let $u$ be a solution of $\left( \ref{system}\right) $ with initial data $%
\left( u_{0},u_{1}\right) $ in $H_{0}^{1}\left( \Omega \right) \times
L^{2}\left( \Omega \right) $ which satisfies%
\begin{equation*}
\left\Vert d\left( \cdot \right) \left( u_{1}+au_{0}\right) \right\Vert
_{L^{2}}<+\infty
\end{equation*}%
where 
\begin{equation*}
d\left( x\right) =\left\{ 
\begin{array}{ll}
\left\vert x\right\vert & d\geq 3 \\ 
\left\vert x\right\vert \ln \left( B\left\vert x\right\vert \right) & d=2%
\end{array}%
\right.
\end{equation*}%
with $B\underset{x\in \Omega }{\inf }\left\vert x\right\vert \geq 2$. Then
there exists $C>0$, such that%
\begin{equation}
\left\Vert u\left( t\right) \right\Vert
_{L^{2}}^{2}+\int_{0}^{t}\int_{\Omega }a\left\vert u\left( s,x\right)
\right\vert ^{2}dxds\leq C\left( \left\Vert u_{0}\right\Vert
_{L^{2}}^{2}+\left\Vert d\left( \cdot \right) \left( u_{1}+au_{0}\right)
\right\Vert _{L^{2}}^{2}\right) ,  \label{l2 bound ike}
\end{equation}%
for all $t\geq 0$.
\end{lemma}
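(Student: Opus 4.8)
The plan is to reduce the estimate to a standard energy identity for the \emph{time-integrated} solution and then absorb the resulting source term by means of a Hardy-type inequality, for which the weight $d(\cdot)$ is precisely designed. First I would set $U(t,x)=\int_0^t u(s,x)\,ds$, so that $\partial_t U = u$ and $U(0)=0$. Integrating the equation, rewritten as $\partial_t\left(\partial_t u + au\right) - \Delta u = 0$, in time and using $u(0)=u_0$, $\partial_t u(0)=u_1$ shows that $U$ solves
\[\partial_t^2 U - \Delta U + a\,\partial_t U = u_1 + a u_0 =: g \quad\text{in }\mathbb{R}_+\times\Omega,\]
together with $U=0$ on $\mathbb{R}_+\times\Gamma$ and initial data $\left(U(0),\partial_t U(0)\right)=\left(0,u_0\right)$. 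The crucial structural feature is that the source $g$ does not depend on $t$.

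Next I would write the energy identity for $U$, obtained by multiplying its equation by $\partial_t U = u$ and integrating over $\left[0,t\right]\times\Omega$ (the boundary term vanishing since $\partial_t U=0$ on $\Gamma$):
\[E_U(t) + \int_0^t\int_\Omega a\left\vert u\right\vert^2\,dx\,ds = E_U(0) + \int_0^t\int_\Omega g\,\partial_t U\,dx\,ds,\]
where $E_U(t)=\tfrac12\int_\Omega\left(\left\vert u\right\vert^2+\left\vert\nabla U\right\vert^2\right)dx$; in particular $E_U(0)=\tfrac12\left\Vert u_0\right\Vert_{L^2}^2$ and $E_U(t)\ge\tfrac12\left\Vert u(t)\right\Vert_{L^2}^2$. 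Because $g$ is time-independent, the source term integrates exactly in time: $\int_0^t\int_\Omega g\,\partial_t U\,dx\,ds = \int_\Omega g\,U(t)\,dx$.

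Then I would estimate this last term by writing $g\,U=(d\,g)\left(U/d\right)$ and using Cauchy--Schwarz, giving $\int_\Omega g\,U(t)\,dx \le \left\Vert d\,g\right\Vert_{L^2}\left\Vert U(t)/d\right\Vert_{L^2}$. Here the definition of $d$ matches Hardy's inequality: for $d\ge3$ the classical Hardy inequality $\int_\Omega\left\vert U\right\vert^2/\left\vert x\right\vert^2\,dx\le C\int_\Omega\left\vert\nabla U\right\vert^2\,dx$ yields $\left\Vert U/d\right\Vert_{L^2}\le C\left\Vert\nabla U\right\Vert_{L^2}$, while for $d=2$ the logarithmic Hardy inequality with weight $d(x)=\left\vert x\right\vert\ln\left(B\left\vert x\right\vert\right)$ (valid because $\ln\left(B\left\vert x\right\vert\right)\ge\ln2>0$ on $\Omega$ under the normalization $B\inf_{x\in\Omega}\left\vert x\right\vert\ge2$) gives the same bound. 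Since $U(t)\in H_0^1(\Omega)$, this produces $\int_\Omega g\,U(t)\,dx\le C\left\Vert d\,g\right\Vert_{L^2}\sqrt{2E_U(t)}$.

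Inserting this into the energy identity and applying Young's inequality in the form $C\left\Vert d\,g\right\Vert_{L^2}\sqrt{2E_U(t)}\le\tfrac12E_U(t)+C'\left\Vert d\,g\right\Vert_{L^2}^2$, I can absorb $\tfrac12E_U(t)$ into the left-hand side to reach
\[\tfrac12E_U(t)+\int_0^t\int_\Omega a\left\vert u\right\vert^2\,dx\,ds\le\tfrac12\left\Vert u_0\right\Vert_{L^2}^2+C'\left\Vert d\,g\right\Vert_{L^2}^2.\]
Recalling $E_U(t)\ge\tfrac12\left\Vert u(t)\right\Vert_{L^2}^2$ and $g=u_1+au_0$ then gives exactly the asserted inequality. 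The main technical points to watch are: justifying all these manipulations for weak solutions by the usual approximation with smooth data (as already used in Lemma \ref{lemma xt}); verifying that $U(t)\in H_0^1(\Omega)$ with $\nabla U(t)=\int_0^t\nabla u(s)\,ds$; and, most delicately, invoking the correct Hardy inequality in the borderline dimension $d=2$, where the logarithmic weight built into $d$ is exactly what makes the argument go through.
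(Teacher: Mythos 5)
Your proof is correct. The paper does not prove this lemma itself---it quotes it verbatim from Ikehata \cite[lemma 2.5]{ikehata}---and your argument (passing to the primitive $U(t)=\int_0^t u(s)\,ds$, whose equation carries the time-independent source $u_1+au_0$, then closing the resulting energy identity with the classical Hardy inequality for $d\geq 3$ and its logarithmic variant for $d=2$, which is exactly what the weight $d(\cdot)$ and the normalization $B\inf_{x\in\Omega}\left\vert x\right\vert\geq 2$ are designed for) is precisely the one used in that reference.
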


We have 
\begin{eqnarray*}
\int_{0}^{t}\int_{\Omega }\left\vert u\right\vert ^{2}dxds &\leq
&\int_{0}^{t}\int_{\Omega \cap B_{L}}\left\vert u\right\vert
^{2}dxds+\int_{0}^{t}\int_{\left\{ \left\vert x\right\vert \geq L\right\}
}\left\vert u\right\vert ^{2}dxds \\
&\leq &C_{L}\int_{0}^{t}\int_{\Omega \cap B_{L}}\left\vert \nabla
u\right\vert ^{2}dxds+\frac{1}{\epsilon _{0}}\int_{0}^{t}\int_{\Omega
}a\left\vert u\left( s,x\right) \right\vert ^{2}dxds \\
&\leq &C_{L}\int_{0}^{t}E_{u}\left( s\right) ds+\frac{1}{\epsilon _{0}}%
\int_{0}^{t}\int_{\Omega }a\left\vert u\left( s,x\right) \right\vert
^{2}dxds.
\end{eqnarray*}%
Now using $\left( \ref{l2 bound ike}\right) $ and $\left( \ref{estimate
integral Eu t}\right) $, we get%
\begin{equation*}
\int_{0}^{+\infty }\int_{\Omega }\left\vert u\right\vert ^{2}dxds\leq CI_{1},
\end{equation*}%
with%
\begin{equation*}
I_{1}=\left\Vert u_{0}\right\Vert _{H^{1}}^{2}+\left\Vert u_{1}\right\Vert
_{L^{2}}^{2}+\left\Vert d\left( \cdot \right) \left( u_{1}+au_{0}\right)
\right\Vert _{L^{2}}^{2}.
\end{equation*}%
On the other hand, using $\left( \ref{proof of theo 2 X t upper bound}%
\right) $ and the fact that%
\begin{equation*}
\int_{0}^{+\infty }E_{u}\left( s\right) ds\leq CI_{0}
\end{equation*}%
\ we obtain%
\begin{eqnarray*}
\int_{0}^{+\infty }X\left( s\right) ds &\leq &C\left( \int_{0}^{+\infty
}\int_{\Omega }\left\vert u\right\vert ^{2}dxds+\int_{0}^{+\infty
}E_{u}\left( s\right) ds\right) \\
&\leq &CI_{1}.
\end{eqnarray*}%
Finally it is clear that, 
\begin{equation*}
X\left( 0\right) \leq CI_{1}.
\end{equation*}%
Therefore from $\left( \ref{1+t l2 u norm}\right) $%
\begin{eqnarray*}
\underset{%
\mathbb{R}
_{+}}{\sup }\left( 1+t\right) \int_{\Omega }\left\vert u\left( t\right)
\right\vert ^{2}dx &\leq &C\left( X\left( 0\right) +\int_{0}^{+\infty
}X\left( s\right) ds+\int_{0}^{+\infty }\int_{\Omega }\left\vert
u\right\vert ^{2}dxds+I_{0}\right) \\
&\leq &CI_{1},
\end{eqnarray*}%
and $\left( \ref{proof of theo 2 Xt bound 1}\right) ,$ gives%
\begin{equation*}
\int_{0}^{+\infty }\left( 1+s\right) E_{u}\left( s\right) ds\leq CI_{1}.
\end{equation*}%
\ The energy decay estimate follows from the fact that%
\begin{eqnarray*}
\left( 1+t\right) ^{2}E_{u}\left( t\right) &\leq &E_{u}\left( 0\right)
+2\int_{0}^{+\infty }\left( 1+s\right) E_{u}\left( s\right) ds \\
&\leq &CI_{1},\text{ for all }t\geq 0.
\end{eqnarray*}

This finishes the proof of theorem 2 and now we give the proof of
proposition 2.

\subsection{Proof of proposition 2}

Let $n\in 
\mathbb{N}
^{\ast }$ and $u$ the solution of $\left( \ref{system}\right) $ with initial
data in $D\left( A^{n}\right) $ such that $u_{0}\in L^{2}\left( \Omega
\right) $ and%
\begin{equation*}
\left\Vert d\left( \cdot \right) \left( u_{1}+au_{0}\right) \right\Vert
_{L^{2}}<+\infty .
\end{equation*}%
Let $v=\partial _{t}u.$ Using these estimates 
\begin{equation*}
\int_{0}^{+\infty }\left( 1+s\right) E_{u}\left( s\right) ds\leq CI_{1}\text{
and }\int_{0}^{+\infty }\left( 1+s\right) E_{v}\left( s\right) ds\leq
CI_{0,1},
\end{equation*}%
and proceeding as in the proof of proposition 1, we show that 
\begin{equation*}
\int_{0}^{+\infty }\left( 1+s\right) ^{2}E_{v}\left( s\right) ds\leq
C_{1}I_{1,1}.
\end{equation*}%
We set $u_{n}=\partial _{t}^{n}u,$ for $n\geq 1$. Using induction argument
and arguing as in the proof of proposition 1, we prove that 
\begin{equation*}
\int_{0}^{+\infty }\left( 1+s\right) ^{n+1}E_{u_{n}}\left( s\right) ds\leq
C_{n}I_{1,n},
\end{equation*}%
with%
\begin{equation*}
I_{1,n}=\sum_{i=0}^{n}\left\Vert A^{i}\left( u_{0},u_{1}\right) \right\Vert
_{H}^{2}+\left\Vert u_{0}\right\Vert _{L^{2}}^{2}+\left\Vert d\left( \cdot
\right) \left( u_{1}+au_{0}\right) \right\Vert _{L^{2}}^{2}.
\end{equation*}%
The energy decay estimate follows from the fact that%
\begin{eqnarray*}
\left( 1+t\right) ^{n+2}E_{u_{n}}\left( t\right)  &\leq &E_{u_{n}}\left(
0\right) +\left( n+2\right) \int_{0}^{t}\left( 1+s\right)
^{n+1}E_{u_{n}}\left( s\right) ds \\
&\leq &C_{n}I_{1,n},\text{ for all }t\geq 0.
\end{eqnarray*}%
Now using the estimate above, we infer that,%
\begin{eqnarray*}
\left( 1+t\right) ^{n+1}\left\Vert \partial _{t}^{n}u\left( t\right)
\right\Vert _{L^{2}}^{2} &\leq &2\left( 1+t\right) ^{n+1}E_{u_{n-1}}\left(
t\right)  \\
&\leq &C_{n-1}I_{1,n-1}\text{ for all }t\geq 0,\text{ }
\end{eqnarray*}

We have $\partial _{t}^{n-1}u$ is a solution of the following system 
\begin{equation}
\left\{ 
\begin{array}{lc}
\partial _{t}^{n+1}u-\Delta \partial _{t}^{n-1}u+a\left( x\right) \partial
_{t}^{n}u=0 & \text{in }\mathbb{R}_{+}\times \Omega , \\ 
\partial _{t}^{n-1}u=0 & \text{on }\mathbb{R}_{+}\times \Gamma , \\ 
\left( \partial _{t}^{n-1}u\left( 0,x\right) ,\partial _{t}^{n}u\left(
0,x\right) \right) \in D\left( A\right)  & 
\end{array}%
\right.   \label{sys dtnu 1}
\end{equation}%
therefore%
\begin{equation*}
\left( \partial _{t}^{n-1}u,\partial _{t}^{n}u\right) \in C^{1}\left( 
\mathbb{R}
_{+},H\right) .
\end{equation*}%
Using Eq $\left( \ref{sys dtnu 1}\right) ,$ we infer that%
\begin{eqnarray*}
\left( 1+t\right) ^{n+1}\left\Vert \Delta \partial _{t}^{n-1}u\left(
t\right) \right\Vert _{L^{2}}^{2} &\leq &C\left( 1+t\right) ^{n+1}\left(
E_{u_{n}}\left( t\right) +E_{u_{n-1}}\left( t\right) \right)  \\
&\leq &CI_{1,n}.
\end{eqnarray*}

\end{document}